\theoremstyle{plain}
\theoremstyle{plain}
\newcommand{\IN}{\mathbb{N}}
\newcommand{\IZ}{\mathbb{Z}}
\def\sqr#1#2{{\,\vcenter{\vbox{\hrule height.#2pt\hbox{\vrule width.#2pt
height#1pt \kern#1pt\vrule width.#2pt}\hrule height.#2pt}}\,}}
\newtheorem{proposition}{Proposition}[section]
\newtheorem{diagram}{Diagram}
\newtheorem{lemma}[proposition]{Lemma}
\newtheorem{theorem}[proposition]{Theorem}
\newtheorem{corollary}[proposition]{Corollary}
\theoremstyle{definition}
\newtheorem{definition}[proposition]{Definition}
\theoremstyle{remark}
\newtheorem*{convention*}{Convention}
\newtheorem{example}[proposition]{Example}
\newtheorem{remark}[proposition]{Remark}
\numberwithin{equation}{section}
\numberwithin{equation}{section}
\title[Ternary rings of operators arising from inverse semigroups]{Ternary rings of operators arising from inverse semigroups}
\author{Robert Pluta}
\address{Department of Mathematics, University of California, Irvine, CA 92697-3875, USA,
and Wenzhou-Kean University}
\email{plutar@tcd.ie}
\author{Bernard Russo}
\address{Department of Mathematics, University of California, Irvine, CA 92697-3875, USA}
\email{brusso@math.uci.edu}
\date{\today}
\keywords{Ternary ring of operators, inverse semigroup, bicyclic semigroup, semiheap,  injective operator space}
\subjclass[2020]{Primary 20N10, 46L99; Secondary 20M18}
\begin{document}
\begin{abstract}
We are interested in properties, especially {\it injectivity} (in the sense of category theory), of the ternary rings of operators generated by  certain subsets  of an inverse semigroup  via the regular representation. We determine all subsets of the extended bicyclic semigroup   which are closed under the triple product $xy^*z$ (called semiheaps) and show that the weakly closed ternary rings of operators generated by them  (W*-TROs) are injective operator spaces. \end{abstract}
\thanks{}
\maketitle
\tableofcontents

\section{Introduction and preliminaries}

\subsection{Introduction}
Ternary rings of operators originated in the work of M.\ R.\ Hestenes in 1962 \cite{Hestenes62}. These are linear spaces of operators from one Hilbert space to another which are stable under the triple product $XY^*Z$,  which he called {\it ternary algebras}. By their nature, these spaces satisfied an associativity condition involving five elements, namely,
\begin{equation}\label{eq:0121221}
(XY^*Z)U^*W=XY^*(ZU^*W)=X(UZ^*Y)^*W.
\end{equation}
These were subsequently axiomatized and  named {\it associative triple systems} \cite{Loos72}.  A milestone in their development in the realm of functional analysis was a Gelfand-Naimark type representation theorem for associative triple systems equipped with an operator type norm \cite{Zettl83}.

At around the same time as Hestenes' work,  unbeknownst to the researchers in the west due partially to the Cold War \cite{Hollings2014},  the concept of  {\it semiheap} was introduced in the Soviet Union \cite{Lawson2017}.  A semiheap is a set together with a single three variable operation satisfying an abstract version of (\ref{eq:0121221}), and akin to the known concepts of  {\it ternary group} and {\it inverse semigroup}.
Since the concept of semiheap is central to this paper, we provide the formal definition, as stated in \cite[p.\ 56]{Lawson2017}.
\begin{definition}
By a {\it semiheap}, we mean a set $K$ together with a singled-valued, everywhere defined
ternary operation $[\cdot\cdot\cdot]$, satisfying the condition
\[
[[k_1k_2k_3]k_4k_5]=[k_1[k_4k_3k_2]k_5]=[k_1k_2[k_3k_4k_5]].
\]
 \end{definition}

Semiheaps and their associated structures are closely related to inverse semigroups. In turn, inverse semigroups, together with groupoids, give rise to operator algebras \cite{Paterson1999}.  A ubiquitous example of an inverse semigroup is the {\it bicyclic semigroup} \cite[p.\ 188]{Paterson1999}. 

In this paper, we generalize the bicyclic semigroup in such a way that exhibits its semiheap structure.\footnote{After completing the final draft of this paper, the authors found out that this extended bicyclic semigroup has already been defined, see for example \cite{Warne1968}. However,  only binary structures are considered in this and other papers.} In Theorem~\ref{thm:0211221}, we  classify all of the subsemiheaps of this extended bicyclic semigroup. We then show in Corollary~\ref{cor:0211221}, via a general result applying to all  inverse semigroups \cite[Theorem 4.5.2]{Paterson1999},  that each of the examples resulting from this classification has the property that the weakly closed ternary ring of operators it generates is an injective operator space.  It is worth pointing out that, although the injectivity of the W*-TROs generated  by the classification of subsemiheaps uses  deep results in functional analysis (\cite[Theorem 2.5]{EffOzaRua01}, \cite[Theorem 4.5.2]{Paterson1999}), the classification itself is self-contained using only elementary arguments.

All of the subsemigroups of the bicyclic semigroup have been determined in \cite{Descalco2005}, while the sub-inverse semigroups were determined earlier in \cite{Schein2003}
and recently in \cite{Hovsepyan2020}. Since inverse subsemigroups are semiheaps, our results give a new approach to  the result in the latter two papers.

\subsection{Inverse semigroups}
 
An {\em inverse semigroup} is a semigroup $S$ in which for every element $x$ there exists a unique element $x^*$, called the {\em inverse} or {\em generalized inverse} of $x$, such that $x = x x^* x$ and $x^* = x^* x x^*$. The function that sends $x$ to $x^*$ is an antiautomorphism of order two. An {\em idempotent} in a semigroup is an element that multiplied by itself results in the same element.  An inverse semigroup may contain many idempotents,  but when it contains precisely one idempotent, it is a group. In fact,  the existence of a unique idempotent characterizes groups among inverse semigroups. 

\begin{remark}
Inverse semigroups are precisely the semigroups   
in which every element has at least one generalized inverse 
and in which any two idempotents commute. 
\end{remark}

\begin{example}    
The {\em symmetric inverse semigroup} $I(X)$,     
also denoted by $I_X$, which is the   
monoid under composition of all partial bijections (or partial symmetries) on a set $X$
is an inverse semigroup.  
The idempotents of $I(X)$  correspond to subsets $A \subseteq X$ 
via $A \leftrightarrow 1_A$, where $1_A$ is the identity function on $A$. 
In particular, $1_\emptyset$ is the empty function (zero element) 
and $1_X$ is the identity element of $I(X)$.   
The symmetric group on $X$ is a subgroup of $I(X)$. 
\end{example} 
 A classical result originating in the work of  Wagner and Preston, known as the Wagner-Preston Representation Theorem, states that every inverse semigroup $S$ may be embedded in the symmetric inverse semigroup $I(S)$. In fact, for any $a$ in $S$, the mapping   
$$ a^* S \to aS, \qquad x\mapsto ax \qquad \text{ ($x  \in a^* S$)}  $$
is a bijection from $a^* S$ onto $aS$ that represents the element $a$.  

\subsection{The extended bicyclic semigroup}\ 
We use the following notation: 
$\IN=\{1,2,\ldots\}$;  $\IN_0=\IN\cup\{0\}$;
$\IZ=\IN_0\cup -\IN$.



\begin{example}\label{exam:0916212}
Let $E_{22}$ be the bicyclic  semigroup, as realized by the unilateral shift (\cite[p. 188]{Paterson1999}). Thus $E_{22}=\{a_{ij}:i,j\in \IN_0\}$, where  for any $i,j\in\IZ$, $e_{ij}$ is the  matrix over $\IZ$  with 1 in the 
$i,j$ position and zeros elsewhere, and $a_{ij}$ is the bounded operator on the Hilbert space  $\ell^2(\IZ)$, 
\[
a_{ij}=\sum_{k\ge 0}e_{i+k,j+k}\in  B(\ell^2(\IZ)).
\]

Set $$E=E_{11}\cup E_{12}\cup E_{21}\cup E_{22}$$
where  $E_{21}=\{a_{ij}:i\in \IN_0, j\in -\IN\}$, $E_{11}=\{a_{ij}:i,j\in -\IN\}$ and  $E_{12}=\{a_{ij}:i\in -\IN, j\in \IN_0\}$.  

\end{example}

We note that for $i,j,p,q\in\IZ$, $a_{ij}^*=a_{ji}$, $a_{ij}$ is a partial isometry in $B(H)$ ($H=\ell^2(\IZ)$) and
\begin{equation}\label{eq:0927211}
a_{ij}a_{pq}=\left\{ \begin{array}{cc} a_{i,q+j-p}, & p\le  j\\
a_{i+p-j,q},& p\ge j
\end{array},
\right.
\end{equation}
equivalently $$a_{ij}a_{pq}=a_{i+p-\min(j,p),j+q-\min(j,p)}.$$

In particular, $a_{ij}a_{pq}\ne 0$, $a_{ij}a_{jq}=a_{iq}$, and $a_{ii}a_{pp}=a_{mm}$ with $m=\max(i,p)$.

Thus
$E$ is an inverse semigroup consisting of partial isometries with  ``inverse'' $a_{ij}^*$ equal to the adjoint of $a_{ij}$ and the idempotents coincide with the elements $a_{ii}$.  We shall call $E$ the {\it extended bicyclic  semigroup}\footnote{Ibid.}, and  when convenient notationally, represent $a_{ij}$  in formulas and diagrams simply by $(i,j)\in \IZ\times \IZ$.

We shall analyze the extended bicyclic semigroup $E$ of  Example~\ref{exam:0916212} toward the aims of finding all of the subsemiheaps of $E$, and showing that the associated W*-TROs are injective operator spaces.

  The idempotents of $E$ are the elements $a_{ii}$ with $i\in\IZ$ and $a_{ii}\le a_{jj}$, that is, $a_{ii}a_{jj}=a_{ii}$,  if and only if $j\le i$. From (\ref{eq:0927211}), we calculate and find that for $p,q\in\IZ$,
\[
a_{ii}a_{pq}=\left\{
 \begin{array}{ll}
  a_{i,q+i-p}\quad ,\quad&p\le i\\
  a_{pq}\quad ,\quad&p\ge i 
  \end{array}\right.,
\]
\[
a_{pq}a_{jj}=\left\{
 \begin{array}{ll}
  a_{pq}\quad ,\quad&j\le q\\
  a_{p+j-q,j}\quad ,\quad&j\ge q 
  \end{array}\right.,
\]
and  
  \[
  a_{ii}a_{pq}a_{jj}=
  \left\{
  \begin{array}{ll}
  a_{i,q+i-p}\quad ,\quad&p\le i\hbox{ and }j\le  q+i-p\\
  a_{j-q+p,q}\quad ,\quad&p\le i\hbox{ and }j\ge q+i-p\\
  a_{pq}\quad,\quad&p\ge i\hbox{ and }j\le q\\
  a_{p+j-q,j}\quad ,\quad&p\ge i\hbox{ and }j\ge q
  \end{array}\right.
  \]
In particular, $a_{00}E=E_{21}\cup E_{22}$ and $Ea_{00}
=E_{12}\cup E_{22}$. Also, $a_{ii}E$, $Ea_{jj}$ and $a_{ii}Ea_{jj}$
are subsemigroups and semiheaps, and  $a_{ii}Ea_{jj}$ is an inverse semigroup if $i=j$. More generally,
\[
Ea_{jj}=\{a_{pq}:j\le q\},\quad a_{ii}E=\{a_{pq}:p\ge i\},
\]
and
\[
a_{ii}E\cap Ea_{jj}=a_{ii}Ea_{jj}=\{a_{pq}:p\ge i, q\ge j\}.
\]

\section{Diagrams and examples}

In order to analyze the subsemiheaps of the extended bicyclic  semigroup $E$ in Example~\ref{exam:0916212}, we prepare some material. From (\ref{eq:0927211}), we have the following lemma.

\begin{lemma}\label{lem:0925211}
For any $a_{ij}, a_{pq}, a_{rs}$ in $E$, we have
\[
a_{ij}a_{pq}^*a_{rs}=\left\{
\begin{array}{lrl}
\hbox{(i) }a_{i,s+p+j-q-r}&\quad r\le p+j-q,&q\le j\\
\hbox{(ii) }a_{i+r-p-j+q,s}&\quad r\ge p+j-q,&q\le j\\
\hbox{(iii) } a_{i+q-j+r-p,s}&\quad r\ge p,&q\ge j\\
\hbox{(iv) }a_{i+q-j,s+p-r}&\quad r\le p,&q\ge j\\
 \end{array}\right.
\]
\end{lemma}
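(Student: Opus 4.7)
The plan is simply to reduce to two applications of the product formula \eqref{eq:0927211}. Since $a_{pq}^* = a_{qp}$, write
\[
a_{ij} a_{pq}^* a_{rs} = (a_{ij} a_{qp}) a_{rs}.
\]
Apply \eqref{eq:0927211} to the inner product $a_{ij}a_{qp}$, comparing $q$ to $j$. This gives two cases: $a_{ij}a_{qp} = a_{i,\,p+j-q}$ when $q \le j$, and $a_{ij}a_{qp} = a_{i+q-j,\,p}$ when $q \ge j$.

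Then apply \eqref{eq:0927211} a second time to the result times $a_{rs}$. In the first case ($q \le j$), the new inner index is $p+j-q$, so I split on $r \le p+j-q$ vs.\ $r \ge p+j-q$; the former yields $a_{i,\,s+p+j-q-r}$ (case (i)), the latter yields $a_{i+r-p-j+q,\,s}$ (case (ii)). In the second case ($q \ge j$), the new inner index is $p$, so I split on $r \le p$ vs.\ $r \ge p$; the former yields $a_{i+q-j,\,s+p-r}$ (case (iv)), the latter yields $a_{i+q-j+r-p,\,s}$ (case (iii)). These four outcomes match the four cases in the statement exactly.

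There is no real obstacle here beyond bookkeeping: the lemma is a direct unwinding of \eqref{eq:0927211} applied twice, and the boundary cases (equalities such as $q = j$ or $r = p$) are consistent because the two sub-formulas in \eqref{eq:0927211} agree whenever the inner indices coincide. The only thing to double-check is that I have not mis-assigned the inequality conditions on the right-hand side of the statement: for instance, case (iii) in the statement is labeled by $r \ge p$ and $q \ge j$, which matches my derivation above, and similarly for (iv). Once this matching is verified, the proof is complete.
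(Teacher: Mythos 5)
Your proof is correct and is exactly the argument the paper intends: the paper gives no explicit proof, stating only that the lemma follows from the product formula \eqref{eq:0927211}, and your double application of that formula (first to $a_{ij}a_{qp}$ splitting on $q\le j$ vs.\ $q\ge j$, then to the result times $a_{rs}$) reproduces all four cases with the stated conditions. The verification of consistency at the boundary cases is a nice touch but not strictly necessary.
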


It is worth noting, as will be evident in the ten diagrams that follow, all triple products in $E$ which involve only two elements, produce new elements which propagate  only  to the right  of,  and  down from,  the two elements, and not to the left or up.

Lemma~\ref{lem:0925212}  and Diagrams 1-5 describe the case in which the slope of the line connecting the two points is negative (or zero or infinite). Lemma~\ref{lem:0929211} and Diagrams 6-10 describe the case in which the slope of the line connecting the two points is positive (or zero or infinite).

\begin{lemma}\label{lem:0925212}
If $K$ is a subsemiheap of $E$, and if $a_{\alpha\beta},a_{\gamma\delta}\in K$ with $\gamma\ge \alpha$ and $ \delta\ge \beta$, then the following elements belong to $K$:
\begin{itemize}
\item $x_1=a_{\alpha+\delta-\beta,\beta+\gamma-\alpha}$
\item $x_2=a_{\alpha+\delta-\beta,\delta}$
\item $x_3=a_{\gamma,\beta+\gamma-\alpha}$
\item $x_4=a_{\gamma,\delta+(\delta-\beta)-(\gamma-\alpha)}$\quad if $\gamma-\alpha\le\delta-\beta$
\item $x_5=a_{\gamma+(\gamma-\alpha)-(\delta-\beta),\delta}$ \quad if $\gamma-\alpha\ge\delta-\beta$
\end{itemize}
\end{lemma}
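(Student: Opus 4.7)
The plan is to obtain each of the five elements $x_1, \ldots, x_5$ as a triple product $a_{ij} a_{pq}^* a_{rs}$ in which each of the three entries is one of the two given elements $a_{\alpha\beta}$ or $a_{\gamma\delta}$. Since $K$ is a subsemiheap, any such triple product lies in $K$, and Lemma~\ref{lem:0925211} tells us exactly which of the four case-formulas (i)--(iv) applies. The hypothesis $\gamma \ge \alpha$ and $\delta \ge \beta$ is used to pin down the relevant case in each instance.

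Concretely, I would compute the following five triple products and verify, by substituting into Lemma~\ref{lem:0925211} and checking the inequalities, the indicated identifications:
\begin{itemize}
\item $[a_{\alpha\beta}\, a_{\gamma\delta}^*\, a_{\alpha\beta}]$ falls into case (iv) (since $r=\alpha\le\gamma=p$ and $q=\delta\ge\beta=j$) and yields $x_1 = a_{\alpha+\delta-\beta,\,\beta+\gamma-\alpha}$;
\item $[a_{\alpha\beta}\, a_{\gamma\delta}^*\, a_{\gamma\delta}]$ falls into case (iii) (since $r=\gamma=p$ and $q=\delta\ge\beta=j$) and yields $x_2 = a_{\alpha+\delta-\beta,\,\delta}$;
\item $[a_{\gamma\delta}\, a_{\gamma\delta}^*\, a_{\alpha\beta}]$ falls into case (i) (since $r=\alpha\le\gamma=p+j-q$ and $q=\delta=j$) and yields $x_3 = a_{\gamma,\,\beta+\gamma-\alpha}$;
\item $[a_{\gamma\delta}\, a_{\alpha\beta}^*\, a_{\gamma\delta}]$ has $q=\beta\le\delta=j$, and here $p+j-q = \alpha+\delta-\beta$ is compared with $r=\gamma$: if $\gamma-\alpha\le\delta-\beta$, case (i) applies and the result is $x_4$; if $\gamma-\alpha\ge\delta-\beta$, case (ii) applies and the result is $x_5$.
\end{itemize}
Thus the five required memberships come from just four triple products, with the dichotomy between $x_4$ and $x_5$ corresponding precisely to the sign of $(\gamma-\alpha)-(\delta-\beta)$ that selects between cases (i) and (ii) of Lemma~\ref{lem:0925211}.

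There is no real obstacle beyond bookkeeping: the only work is guessing the correct triple product for each target element (done by reverse-engineering which of the four case-formulas can produce an element with the desired first and second indices), and then confirming that the inequalities defining the case are consistent with the hypothesis $\gamma\ge\alpha$, $\delta\ge\beta$. The main step that requires care is the $[a_{\gamma\delta}\, a_{\alpha\beta}^*\, a_{\gamma\delta}]$ computation, where the same triple product produces two different outputs depending on the slope, which is exactly why $x_4$ and $x_5$ are stated as conditional conclusions in the lemma.
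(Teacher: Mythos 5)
Your proposal is correct and follows essentially the same route as the paper: the paper's proof likewise enumerates the triple products formed from $a_{\alpha\beta}$ and $a_{\gamma\delta}$, identifies $x_1,x_2,x_3$ with $a_{\alpha\beta}a_{\gamma\delta}^*a_{\alpha\beta}$, $a_{\alpha\beta}a_{\gamma\delta}^*a_{\gamma\delta}$, $a_{\gamma\delta}a_{\gamma\delta}^*a_{\alpha\beta}$ respectively, and obtains $x_4$ or $x_5$ from $a_{\gamma\delta}a_{\alpha\beta}^*a_{\gamma\delta}$ according to the sign of $(\gamma-\alpha)-(\delta-\beta)$, all via Lemma~\ref{lem:0925211}. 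Your case assignments (iv), (iii), (i), and (i)/(ii) check out against the hypotheses $\gamma\ge\alpha$, $\delta\ge\beta$.
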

\begin{proof} The following are the eight possible triple products containing two distinct elements, and thus belong to $K$. They are calculated using Lemma~\ref{lem:0925211}.
\begin{itemize}
\item $a_{\alpha\beta}a_{\alpha\beta}^*a_{\alpha\beta}=a_{\alpha\beta}$
\item $a_{\alpha\beta}a_{\gamma\delta}^*a_{\alpha\beta}=a_{\alpha+\delta-\beta,\beta+\gamma-\alpha}=x_1$
\item $a_{\alpha\beta}a_{\alpha\beta}^*a_{\gamma\delta}=a_{\gamma\delta}$
\item $a_{\alpha\beta}a_{\gamma\delta}^*a_{\gamma\delta}=a_{\alpha+\delta-\beta,\delta}=x_2$
\item $a_{\gamma\delta}a_{\alpha\beta}^*a_{\alpha\beta}=a_{\gamma\delta}$
\item $a_{\gamma\delta}a_{\gamma\delta}^*a_{\alpha\beta}=a_{\gamma,\beta+\gamma-\alpha}=x_3$
\item $a_{\gamma\delta}a_{\alpha\beta}^*a_{\gamma\delta}=
\left\{\begin{array}{ll}
a_{\gamma,\delta+\alpha+\delta-\beta-\gamma}=x_4,&\quad \gamma-\alpha\le\delta-\beta\\
a_{\gamma+\gamma-\alpha-\delta+\beta,\delta}=x_5, &\quad  \gamma-\alpha\ge\delta-\beta
\end{array}\right.$
\item $a_{\gamma\delta}a_{\gamma\delta}^*a_{\gamma\delta}=a_{\gamma\delta}$
\end{itemize}

\end{proof}

Special cases of Lemma~\ref{lem:0925212} and their diagrams, which serve as useful references, are as follows:

\begin{diagram}\label{diagram:1022211}
\quad\quad$ \delta-\beta>\gamma-\alpha>0$\quad ($b=\delta-\beta, h=\gamma-\alpha$)\bigskip

\hspace{2.15in}$\beta$\hspace{1in}   $\delta$

\hspace{2in}$\alpha$\ $\bullet$

\vspace{.05in}

\hspace{2.165in}$\vdots$\hspace{.1in}$h$\hspace{.35in}b$-$h\hspace{.2in}

\hspace{2in}$\gamma$\ \ \   $\cdots$\hspace{.1in}o \ $\cdots\ \cdots$\hspace{.12in}$\bullet$\hspace{.63in}o

\hspace{2.52in}$x_3$\hspace{1.33in}\ $x_4$
\vspace{.2in}

\hspace{2.4in}\hspace{.15in}o\hspace{.7in}o

\hspace{2.52in}$x_1$\hspace{.7in}$x_2$
\end{diagram}
\bigskip

\begin{diagram}\label{diagram:1022212}
\quad\quad$\gamma= \alpha, \delta-\beta>0$\bigskip

\hspace{2.3in}$\beta$\hspace{.45in}  $\delta$

\hspace{2in}$\alpha,\gamma$ $\bullet$ $\cdots\ \cdots$ $\bullet$\hspace{.5in}o

\hspace{2.33in}$x_3$\hspace{1in}$x_4$
\vspace{.13in}

\hspace{2.35in}o\hspace{.55in}o

\hspace{2.33in}$x_1$\hspace{.5in}$x_2$
\end{diagram}
\bigskip

\begin{diagram}\label{diagram:1022213}
\quad\quad$\gamma-\alpha>\delta-\beta>0$\quad ($b=\delta-\beta, h=\gamma-\alpha$)
\bigskip

\hspace{2.in}$\beta$\hspace{.45in}  $\delta$

\hspace{1.85in}$\alpha$ $\bullet$

\hspace{2in}$\vdots$

\hspace{2in}$\vdots$
\hspace{.5in}o $x_2$\hspace{.4in}o $x_1$

\hspace{2in}$\vdots$

\hspace{2in}$\vdots$         \hspace{.05in}\hspace{.05in}

\hspace{1.85in}$\gamma$\ \ $\cdots\ \cdots$
\hspace{.08in}$\bullet$
 \hspace{.1in}h$-$b\hspace{.1in} o $x_3$
\vspace{.4in}

\hspace{2in}\hspace{.6in}o $x_5$

\end{diagram}

\begin{diagram}\label{diagram:1022214}
\quad\quad$\delta=\beta,\gamma-\alpha>0$\bigskip

\bigskip

\hspace{2.05in} $\beta,\delta$

\hspace{2in}$\alpha$ $\bullet$ $x_2$\hspace{.2in}o $x_1$

\hspace{2.18in}$\vdots$

\hspace{2in}$\gamma$ $\bullet$\hspace{.43in}o $x_3$
\vspace{.2in}

\hspace{2.1in} o   $x_5$
\end{diagram}

\bigskip

\begin{diagram}\label{diagram:1022215}
\quad\quad$\delta-\beta=\gamma-\alpha>0$\bigskip

\hspace{2.05in} $\beta$\hspace{.4in}    $\delta$

\hspace{2in}$\alpha$ $\bullet$\hspace{.3in}

\hspace{2.3in}$\ddots$

\hspace{2in}$\gamma$ \hspace{.5in}$\bullet_{\circ\circ}^{\circ\circ\circ}$ $x_1,x_2,x_3,x_4,x_5$
\end{diagram}

\bigskip

\begin{lemma}\label{lem:0929211}
If $K$ is a subsemiheap of $E$, and if $a_{\alpha\beta},a_{\gamma\delta}\in K$ with $\gamma\ge \alpha, \delta\le \beta$, then the following elements belong to $K$:
\begin{itemize}
\item $x_1=a_{\alpha,\beta+(\gamma-\alpha)+(\beta-\delta)}$
\item $x_2=a_{\gamma+\beta-\delta,\beta}$
\item $x_3=a_{\gamma,\beta+\gamma-\alpha}$
\item $x_4=a_{\gamma+(\beta-\delta)+(\gamma-\alpha),\delta}$
\end{itemize}
\end{lemma}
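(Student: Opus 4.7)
My plan is to mirror the proof of Lemma~\ref{lem:0925212} exactly. Since a subsemiheap is closed under the triple product $xy^*z$, and since we have only the two elements $a_{\alpha\beta}$ and $a_{\gamma\delta}$ in hand, I will enumerate the $2^3 = 8$ triple products in which each of the three slots is filled by one of these two elements, evaluate each using Lemma~\ref{lem:0925211}, and read off the four claimed elements $x_1,x_2,x_3,x_4$. (Four of the eight products will trivially return $a_{\alpha\beta}$ or $a_{\gamma\delta}$: the three ``diagonal'' products $a_{\alpha\beta}a_{\alpha\beta}^*a_{\alpha\beta}$, $a_{\alpha\beta}a_{\alpha\beta}^*a_{\gamma\delta}$, $a_{\gamma\delta}a_{\gamma\delta}^*a_{\gamma\delta}$ from the partial-isometry relations, and additionally $a_{\alpha\beta}a_{\gamma\delta}^*a_{\gamma\delta}$, which collapses to $a_{\alpha\beta}$ under the present sign hypotheses.)

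For the four nontrivial products I expect the following matching, based on applying Lemma~\ref{lem:0925211} with the standing assumptions $\gamma-\alpha\ge 0$ and $\beta-\delta\ge 0$:
\begin{itemize}
\item $a_{\alpha\beta}a_{\gamma\delta}^{*}a_{\alpha\beta}$ falls into case (i) of Lemma~\ref{lem:0925211}, since $\delta\le\beta$ and $\alpha\le\gamma+\beta-\delta$, giving $a_{\alpha,\beta+(\gamma-\alpha)+(\beta-\delta)}=x_1$.
\item $a_{\gamma\delta}a_{\alpha\beta}^{*}a_{\alpha\beta}$ falls into case (iii), since $\beta\ge\delta$ and $\alpha\ge\alpha$, giving $a_{\gamma+\beta-\delta,\beta}=x_2$.
\item $a_{\gamma\delta}a_{\gamma\delta}^{*}a_{\alpha\beta}$ falls into case (i) (with $p=\gamma,q=\delta,r=\alpha$, so $\alpha\le\gamma$ suffices), giving $a_{\gamma,\beta+\gamma-\alpha}=x_3$.
\item $a_{\gamma\delta}a_{\alpha\beta}^{*}a_{\gamma\delta}$ falls into case (iii), since $\beta\ge\delta$ and $\gamma\ge\alpha$, giving $a_{\gamma+(\beta-\delta)+(\gamma-\alpha),\delta}=x_4$.
\end{itemize}

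The only genuine ``thinking'' in the proof is selecting the correct branch of the four-case formula in Lemma~\ref{lem:0925211} for each product, which is purely a matter of checking a pair of inequalities against the hypotheses $\gamma\ge\alpha$ and $\delta\le\beta$. There is no real obstacle: the two sign assumptions were tailored precisely so that each nontrivial product lands in a single unambiguous case, and the arithmetic after that is immediate. Thus the proof will consist simply of a bulleted list of the eight triple products together with their evaluations, in the same style as Lemma~\ref{lem:0925212}.
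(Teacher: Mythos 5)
Your proposal is correct and is essentially identical to the paper's own proof: the paper likewise lists the eight triple products of $a_{\alpha\beta}$ and $a_{\gamma\delta}$, evaluates each via Lemma~\ref{lem:0925211}, and obtains $x_1,\dots,x_4$ from the four nontrivial ones (with $a_{\alpha\beta}a_{\gamma\delta}^*a_{\gamma\delta}=a_{\alpha\beta}$ collapsing as you predict). Your case assignments in Lemma~\ref{lem:0925211} all check out.
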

\noindent{\it Proof.}
The following eight products belong to $K$ and can be calculated using Lemma~\ref{lem:0925211}.
\begin{itemize}
\item $a_{\alpha\beta}a_{\alpha\beta}^*a_{\alpha\beta}=a_{\alpha\beta}$
\item $a_{\alpha\beta}a_{\gamma\delta}^*a_{\alpha\beta}=a_{\alpha,\beta+(\gamma-\alpha)+(\beta-\delta)}=x_1$
\item $a_{\alpha\beta}a_{\alpha\beta}^*a_{\gamma\delta}=a_{\gamma\delta}$
\item $a_{\alpha\beta}a_{\gamma\delta}^*a_{\gamma\delta}=a_{\alpha\beta}$
\item $a_{\gamma\delta}a_{\alpha\beta}^*a_{\alpha\beta}=a_{\gamma+\beta-\delta,\beta}=x_2$
\item $a_{\gamma\delta}a_{\gamma\delta}^*a_{\alpha\beta}=a_{\gamma,\beta+\gamma-\alpha}=x_3$
\item $a_{\gamma\delta}a_{\alpha\beta}^*a_{\gamma\delta}=a_{\gamma+\beta-\delta+\gamma-\alpha,\delta}=x_4$
\item $a_{\gamma\delta}a_{\gamma\delta}^*a_{\gamma\delta}=a_{\gamma\delta}$\hfill$\Box$
\end{itemize}

Special cases of Lemma~\ref{lem:0929211} and their diagrams are as follows:

\begin{diagram}\label{diagram:1027211}
\quad\quad$ \beta-\delta>\gamma-\alpha>0$\quad ($b=\beta-\delta, h=\gamma-\alpha$)\bigskip

\hspace{2.15in}$\delta$\hspace{.83in}   $\beta$

\hspace{2in}$\alpha$\hspace{1in}$\bullet$ \hspace{1.2in} o $x_1$

\hspace{3.05in} $\vdots$ \ \ $h$ \hspace{1in} $\vdots$

\hspace{2in}$\gamma$\ \  $\bullet$ $\cdots$\ $\cdots$\  $\cdots$  \hspace{.4in} $ \circ$ $\cdots$\ $\cdots$\ $\cdots$

\hspace{3.5in} $x_3$
\vspace{.21in}

\hspace{2in} $b+h$
\vspace{.25in}

\hspace{2in} \hspace{1.08in}o $x_2$

\hspace{3.17in}$\vdots$

\hspace{2.2in}o\  \ \   $\cdots\ \cdots\ \cdots$

\hspace{2.1in} $x_4$

\end{diagram}

\begin{diagram}\label{diagram:1027212}
\quad\quad$\gamma= \alpha, \beta-\delta>0$\bigskip

\hspace{2.3in}$\delta$\hspace{.49in}  $\beta$

\hspace{2in}$\alpha,\gamma$ $\bullet$ $\cdots\ \cdots$ $\bullet$\hspace{.5in}o

\hspace{2.95in}$x_3$\hspace{.44in}$x_1$
\vspace{.13in}

\hspace{2.35in}o\hspace{.55in}o

\hspace{2.33in}$x_4$\hspace{.5in}$x_2$
\end{diagram}

\begin{diagram}\label{diagram:1027213}
\quad\quad$\gamma-\alpha>\beta-\delta>0$\quad ($b=\beta-\delta, h=\gamma-\alpha$)
\bigskip

\hspace{2in}$\delta$\hspace{.23in}  $\beta$\hspace{.45in} $b+h$

\hspace{1.85in}$\alpha$\hspace{.35in} $\bullet$\hspace{1.2in} o $x_1$

\hspace{2.35in}$\vdots$\hspace{1.25in} $\vdots$

\hspace{2.35in}$\vdots$\hspace{1.25in} $\vdots$

\hspace{2.35in}$\vdots$\hspace{1.25in} $\vdots$

\hspace{1.85in}$\gamma$\ \  $\bullet$ $\cdots$\hspace{.08in}\hspace{.85in} o $\cdots$

\hspace{2in} \hspace{1.28in}$x_3$
\vspace{.05in}

\hspace{2.3in} o $x_2$

\hspace{2.4in}$\vdots$

\hspace{2.4in}$\vdots$

\hspace{2.4in}$\vdots$

\hspace{1.8in}$x_4$\hspace{.12in}o $\cdots$


\end{diagram}


\begin{diagram}\label{diagram:1027214}
\quad\quad$\delta=\beta,\gamma-\alpha>0$\bigskip

\bigskip

\hspace{2.05in} $\beta,\delta$

\hspace{2in}$\alpha$ $\bullet$ \hspace{.35in}o $x_1$

\hspace{2.18in}$\vdots$

\hspace{2in}$\gamma$ $\bullet$ $x_2$\hspace{.23in}o $x_3$
\vspace{.24in}

\hspace{2in}o $x_4$
\end{diagram}

\bigskip

\begin{diagram}\label{diagram:1027215}
\quad\quad$\delta-\beta=\gamma-\alpha>0$\bigskip

\hspace{2.05in} $\delta$\hspace{.4in}    $\beta$

\hspace{2in}$\alpha$\hspace{.47in} $\bullet$\hspace{.95in} o $x_1$

\hspace{2.6in} $\vdots$\hspace{.95in} $\vdots$

\hspace{2in}$\gamma$ $\bullet$ \ \ $\cdots$\ \ \hspace{.5in}o\ \  $\cdots$

\hspace{3.1in} $x_3$

\vspace{.23in}

\hspace{2.65in}o $x_2$

\hspace{2.6in} $\vdots$

\hspace{2.1in} o\ \ \ $\cdots$

\hspace{2.1in} $x_4$

\end{diagram}

\bigskip

\begin{example}
For $\alpha,\beta\in \IZ$, and $J\subset\IN_0$,  $D_{\alpha,\beta}(J):= \{a_{\alpha+j,\beta+j}: j\in J\}$ is a subsemiheap of $E$.
\end{example}

\begin{example}
For $\alpha,\beta\in \IZ$, and $\sigma\in\IN$, 
$K_{\alpha,\beta}= \{a_{\alpha+\ell,\beta+m}: \ell,m\in\IN_0\}$, and more generally,
 $K_{\alpha,\beta}^\sigma:= \{a_{\alpha+\ell\sigma,\beta+m\sigma}: \ell,m\in\IN_0\}$ are subsemiheaps of $E$.
\end{example}

\begin{example}\label{ex:1201211}
  $
  K=\{a_{\alpha_0\beta_0}\}\cup\{a_{\alpha_0+k_j,\beta_0+k_j}:j=1,\ldots \ell_0-1\}\cup K_{\alpha_0+k_{\ell_0},\beta_0+k_{\ell_0}}^\sigma 
  $  is a subsemiheap of $E$, where $\alpha_0,\beta_0\in\IZ$, $\ell_0,\sigma,k_i\in\IN$, $k_1<k_2<\cdots<k_{\ell_0-1}$, and 
  \[
  K_{\alpha_0+k_{\ell_0},\beta_0+k_{\ell_0}}^\sigma =\{a_{\alpha_0+k_{\ell_0}+m\sigma,\beta_0+k_{\ell_0}+n\sigma}:m,n\in\IN_0\}.
  \]
  
\end{example}
\begin{proof}
Let $x_j=a_{\alpha_0+k_j,\beta_0+k_j}$ for $1\le k_j<\ell_0$ and $y_{mn}=a_{\alpha_0+\ell_0+m\sigma,\beta_0+\ell_0+n\sigma}$ for $m,n\in\IN_0$. 

The following eight products belong to $K$, as calculated by Lemma~\ref{lem:0925211}.
\begin{enumerate}
\item $x_jy_{mn}^*y_{pq}=\left\{ \begin{array}{ll}
y_{n+p-m,q}&\hbox{ if } p\ge m \quad\quad \hbox{ (Lemma~\ref{lem:0925211}(iii))}\\
y_{n,q+m-p}&\hbox{ if }  p\le m  \quad\quad\hbox{ (Lemma~\ref{lem:0925211}(iv))}\\
\end{array}\right.$
\item $y_{mn}x_j^*y_{pq}=\left\{ \begin{array}{ll}
y_{m,n+q-p}&\hbox{ if } p\le n  \quad\quad\hbox{ (Lemma~\ref{lem:0925211}(i))}\\
y_{m+p-n,q}&\hbox{ if }  p\ge n   \quad\quad\hbox{ (Lemma~\ref{lem:0925211}(ii))}\\
\end{array}\right.$

\item $y_{mn}y_{pq}^*x_j=\left\{ \begin{array}{ll}
y_{m,p+n-q}&\hbox{ if } q\le n  \quad\quad \hbox{ (Lemma~\ref{lem:0925211}(i))}\\
y_{m+q-n,p}&\hbox{ if }  q\ge n  \quad\quad \hbox{ (Lemma~\ref{lem:0925211}(iv)}\\
\end{array}\right.$
\item $y_{mn}x_i^*x_j=y_{mn}    \quad\quad\hbox{ (Lemma~\ref{lem:0925211}(i))}$
\item $x_iy_{mn}^*x_j=y_{nm} \quad\quad\hbox{ (Lemma~\ref{lem:0925211}(iv))}$
\item $x_ix_j^*y_{mn}=y_{mn} \quad\quad\hbox{ (Lemma~\ref{lem:0925211}(ii))}$
\item $x_ix_j^*x_\ell=x_{\max(i,j,\ell)}\quad\hbox{ (Lemma~\ref{lem:0925211}(i)-(iv))}$
\item $y_{mn}y_{pq}^*y_{rs}=\left\{ \begin{array}{ll}
y_{m,s+p+n-r-q}&\hbox{ if } q\le n\hbox{ and }r+q\le p+n\quad\hbox{ (Lemma~\ref{lem:0925211}(i))}\\
y_{ m+r+q-p-n,s}&\hbox{ if }  q\le n\hbox{ and }r+q\ge p+n\quad\hbox{ (Lemma~\ref{lem:0925211}(ii))}\\
y_{m+q-n+r-p,s}&\hbox{ if } q\ge n\hbox{ and }r\ge p\quad\quad\quad\quad\hbox{ (Lemma~\ref{lem:0925211}(iii))}\\
y_{ m+q-n+p-r,s}&\hbox{ if }  q\ge n\hbox{ and }r\le p\quad\quad\quad\quad\hbox{ (Lemma~\ref{lem:0925211}(iv))}\\
\end{array}\right.$
\end{enumerate}

We provide some details for cases (1) and (8).
For case (1), by Lemma~\ref{lem:0925211}(iii),
\[
x_jy_{mn}^*y_{pq}=
a_{\alpha_0+k_j,\beta_0+k_j}  a_{\alpha_0+\ell_0+m\sigma,\beta_0+\ell_0+n\sigma}^*
a_{\alpha_0+\ell_0+p\sigma,\beta_0+\ell_0+q\sigma}=a_{\alpha_0+\ell_0+(n+p-m)\sigma,\beta_0+\ell_0+q\sigma},
\]
if $p\ge m$ and $\ell_0+n\sigma\ge k_j$, 
and by Lemma~\ref{lem:0925211}(iv),
\[
a_{\alpha_0+k_j,\beta_0+k_j}  a_{\alpha_0+\ell_0+m\sigma,\beta_0+\ell_0+n\sigma}^*
a_{\alpha_0+\ell_0+p\sigma,\beta_0+\ell_0+q\sigma}=a_{\alpha_0+\ell_0+n\sigma,\beta_0+\ell_0+(q+m-p)\sigma},
\]
if $p\le m$ and $\ell_0+n\sigma\ge k_j$.\smallskip

For case (8), with$q\le n$, by Lemma~\ref{lem:0925211}(i),
\[
a_{\alpha_0+\ell_0+m\sigma,\beta_0+\ell_0+n\sigma}  a_{\alpha_0+\ell_0 +p\sigma,\beta_0+\ell_0+q\sigma}^*
a_{\alpha_0+\ell_0+r\sigma,\beta_0+\ell_0+s\sigma}=a_{\alpha_0+\ell_0+m\sigma,\beta_0+\ell_0+(s+p+n-r-q)\sigma},
\]
if $q\le n$ and $r+q\le p+n$, so that $s+p+n-r-q\in\IN_0$; and by Lemma~\ref{lem:0925211}(ii),
\[
a_{\alpha_0+\ell_0+m\sigma,\beta_0+\ell_0+n\sigma}  a_{\alpha_0+\ell_0 +p\sigma,\beta_0+\ell_0+q\sigma}^*
a_{\alpha_0+\ell_0+r\sigma,\beta_0+\ell_0+s\sigma}=a_{\alpha_0+\ell_0+(m+r+q-p-n)\sigma,\beta_0+\ell_0+s\sigma},
\]
if $q\le n$ and $r+q\ge p+n$, so that $m+r+q-p-n\in\IN_0$.\smallskip

The subcases  of (8) for which $q\ge n$ are as follows. 
By  Lemma~\ref{lem:0925211}(iii),
\[
a_{\alpha_0+\ell_0+m\sigma,\beta_0+\ell_0+n\sigma}  a_{\alpha_0+\ell_0 +p\sigma,\beta_0+\ell_0+q\sigma}^*
a_{\alpha_0+\ell_0+r\sigma,\beta_0+\ell_0+s\sigma}=a_{\alpha_0+\ell_0+(m+q-n+r-p)\sigma,\beta_0+\ell_0+s\sigma},
\]
if $q\ge n$ and $r\le p$, so that $m+q-n\in\IN_0$; and by Lemma~\ref{lem:0925211}(iv),
\[
a_{\alpha_0+\ell_0+m\sigma,\beta_0+\ell_0+n\sigma}  a_{\alpha_0+\ell_0 +p\sigma,\beta_0+\ell_0+q\sigma}^*
a_{\alpha_0+\ell_0+r\sigma,\beta_0+\ell_0+s\sigma}=a_{\alpha_0+\ell_0+(m+q-n)\sigma,\beta_0+\ell_0+(s+p-r)\sigma},
\]
if $q\ge n$ and $r\le p$, so that $m+q-n\in\IN_0$ and $s+p-r\in\IN_0$.
\end{proof}

\begin{example}\label{example:1220211}
Let $K= \bigcup_{k\in A} K_{\alpha_0+k,\beta_0+k}^p$, where $\alpha_0,\beta_0\in\IZ$, $p>0$, $A\subset \{0,1,\ldots, p-1\}$
and  $a_{\alpha_0+k,\beta_0+k}$, $k\in A$, denote  the elements of $K$ lying on the diagonal with $k<p$.  
(See Diagram~\ref{diagram:0308221} and Proposition~\ref{prop:1213211}.) In fact, $K$ is a sub-inverse semigroup of $E$.
\end{example}
\begin{proof} We note first that (setting $\alpha_0=\beta_0=0$ for convenience, see Remark~\ref{rem:0315221})
\[
K=\{(k+\ell p,k+mp):k\in A,\ell,m\in\IN_0\}
\]
and it suffices to show that 
\[
(k_1+\ell_1p,k_1+m_1p)(k_2+\ell_2 p,k_2+m_2p)^*(k_3+\ell_3p,k_3+m_3p)
\]
belongs to $K$.  We calculate this triple product using the four cases in Lemma~\ref{lem:0925211}.\smallskip

By Lemma~\ref{lem:0925211}(i), if $k_2+m_2p\le k
_1+m_1p$, and $k_3+\ell_3p\le k_1+(\ell_2+m_1-m_2)p$, then 
\[
(k_1+\ell_1p,k_1+m_1p)(k_2+\ell_2 p,k_2+m_2p)^*(k_3+\ell_3p,k_3+m_3p)=(k_1+\ell_1p,k_1+(m_3+\ell_2+m_1-m_2-\ell_3)p),
\]
and it is required to show that $m_3+\ell_2+m_1-m_2-\ell_3\ge 0$.

Following the argument in \cite[Lemma 4.5]{Descalco2005},  we have
\[
k_1+(\ell_2+m_1-m_2-\ell_3)p\ge k_3\ge 0
\]
so that 
$(\ell_2+m_1-m_2-\ell_3)p\ge -k_1>-p$ and therefore $\ell_2+m_1-m_2-\ell_3\ge 0$ and $m_3+ \ell_2+m_1-m_2-\ell_3\ge 0$, as required.\smallskip

By Lemma~\ref{lem:0925211}(ii), if $k_2+m_2p\le k
_1+m_1p$, and $k_3+\ell_3p\ge k_1+(\ell_2+m_1-m_2)p$, then 
\[
(k_1+\ell_1p,k_1+m_1p)(k_2+\ell_2 p,k_2+m_2p)^*(k_3+\ell_3p,k_3+m_3p)=(k_3+(\ell_1+\ell_3-\ell_2-m_1+m_2)p,k_3+m_3p)
\]
and it is required to show that $\ell_3-\ell_2-m_1+m_2\ge 0$.

Following the argument in \cite[Lemma 4.5]{Descalco2005},  we have
\[
k_3+(\ell_3-\ell_2-m_1+m_2)p\ge k_1\ge 0
\]
so that 
$(\ell_3-\ell_2-m_1+m_2)p\ge -k_3>-p$ and therefore $\ell_3-\ell_2-m_1+m_2\ge 0$ and $\ell_1+\ell_3-\ell_2-m_1+m_2\ge 0$, as required.\smallskip

By Lemma~\ref{lem:0925211}(iii), if $k_3+\ell_3p\ge k
_2+\ell_2p$, and $k_2+m_2p\ge k_1+m_1p$, then 
\[
(k_1+\ell_1p,k_1+m_1p)(k_2+\ell_2 p,k_2+m_2p)^*(k_3+\ell_3p,k_3+m_3p)=(k_3+(\ell_1+m_2-m_1+\ell_3-\ell_2)p,k_3+m_3p)
\]
and it is required to show that $\ell_1+m_2-m_1+\ell_3-\ell_2\ge 0$.

Following the argument in \cite[Lemma 4.5]{Descalco2005},  we have
\[
k_3+(\ell_3-m_1+m_2-\ell_2)p\ge k_1\ge 0
\]
so that 
$(\ell_3-m_1+m_2-\ell_2)p\ge -k_3>-p$ and therefore $\ell_3-m_1+m_2-\ell_2\ge 0$ and $\ell_1+\ell_3-m_1+m_2-\ell_2\ge 0$, as required.\smallskip

By Lemma~\ref{lem:0925211}(iv), if $k_3+\ell_3p\le k
_2+\ell_2p$, and $k_2+m_2p\ge k_1+m_1p$, then 
\[
(k_1+\ell_1p,k_1+m_1p)(k_2+\ell_2 p,k_2+m_2p)^*(k_3+\ell_3p,k_3+m_3p)=(k_2+(\ell_1+m_2-m_1)p,k_2+(m_3+\ell_2-\ell_3)p)
\]
and it is required to show that $\ell_1+m_2-m_1\ge 0$. and $m_3+\ell_2-\ell_3\ge 0$.

Following the argument in \cite[Lemma 4.5]{Descalco2005},  we have
\[
k_2+(\ell_2-\ell_3)p\ge k_3\ge 0
\]
so that 
$(\ell_2-\ell_3)p\ge -k_3>-p$ and therefore $\ell_2-\ell_3\ge 0$ and $m_3 +\ell_2-\ell_3\ge 0$, as required.

Following the argument in \cite[Lemma 4.5]{Descalco2005},  we have
\[
k_2+(m_2-m_3)p\ge k_1\ge 0
\]
so that 
$(m_2-m_1)p\ge -k_1>-p$ and therefore $m_2-m_1\ge 0$ and $\ell_1+m_2-m_1\ge 0$, as required.
\end{proof}

\begin{remark}\label{rem:0315221}

The adjoint operation $a_{ij}\mapsto a_{ij}^*=a_{ji}$ on the extended bicyclic semigroup $E$ is an anti-isomorphism of a subsemiheap $K$  of $E$ onto the subsemiheap $K^*$, that is, $(ab^*c)^*=c^*ba^*$.
As another application of Lemma~\ref{lem:0925211}, 
the  translation map on the extended bicyclic semigroup is a triple isomorphism, that is, if $\varphi_{\alpha,\beta}(a_{ij})=a_{i+\alpha,j+\beta}$, then  
\[
\varphi(a_{ij}a_{pq}^*a_{rs})=\varphi( a_{ij})\varphi (a_{pq})^*\varphi (a_{rs}).
\]

Hence, if $K$ is a subsemiheap of $K_{\alpha,\beta}$, then $\varphi_{-\alpha,-\beta}(K)$ is a subsemiheap of the bicyclic semigroup $K_{0,0}$.   At the very least, this fact can simplify notation in parts of this paper.  A sub-semigroup of the bicyclic semigroup $K_{0,0}$ (see \cite{Descalco2005}), is not necessarily  a subsemiheap, unless it is a sub-inverse semigroup (cf. \cite[Corollary 7.1]{Descalco2005}). 

\end{remark}

\begin{diagram}\label{diagram:0308221}

{\footnotesize

\begin{center}
\begin{tabular}{| r | r |  c c  c c c c c c c c c c c c c c c c c c c c c c  c| c}\hline

& & & & & $k_1$ &  & $k_2$& $k_3$   & &$k_4$ &  &  $k_5$ & & $k_6$ & $k_7$ & & $k_8$ & & $k_9$ & & & $k_{10}$ & & $k_{11}$&  $k_{12}$ & &\\\hline

& & 0 &  & & $q$ & &  & $p$ & & & & $q+p$ & & & $2p$ & & & & $q+2p$ & & & $3p$ & & &  $q+3p$  &\\\hline

& 0 & $\bullet$ &  & & & & & $\blacktriangle$ & & & & & & & $\blacktriangle$  & & & & & & & $\blacktriangle$ & &   & &\\

& & &  & & & &  & & & & & & & & & & & & & & & & & &  &\\

& & &  & & & &  & & & & & & & & & & & & & & & & & &  &\\

$k_1$ & q & &  & &$\blacksquare$ & &  & & & & &$\blacksquare$   & & & & & & & $\blacksquare$ & & & & & &$\blacksquare$& $\cdots$ \\

& & &  & & & &  & & & & & & & & & & & & & & & & &  & &\\

$k_2$& & &  & & & & $\bullet$ & & & & & & & $\bullet$ & & & & & & & $\bullet$ & & & &  &\\

$k_3$ & $p$ & $\blacktriangle$  &  & & & &  & $\blacktriangle$ & & & & & & & $\blacktriangle$ & & & & & & & $\blacktriangle$& & &  &\\

& & & & &  & &  & & & & & & & & & & & & & & & & & &  &\\

$k_4$ & & & & &  & &  & & & $\bullet$ & & & & & & & $\bullet$ & & & & & & & $\bullet$ &  &\\

$k_5$ & $q+p$& & &  &  $\blacksquare$ &  &  & & & & & $\blacksquare$ & & & & & & & $\blacksquare$ & & & & & & $\blacksquare$  & $\cdots$\\

& & & & &  & &  & & & & & & & & & & & & & & & & & &  &\\

$k_6$ & & & &   & &  & & & & & & & & $\bullet$ & & & & & & & $\bullet$& & & &   &\\

$k_7$ & $2p$ & $\blacktriangle$  &  & & & &  & $\blacktriangle$ & & & & & & & $\blacktriangle$ & & & & & & & $\blacktriangle$& & &  &\\

& & & & &  & &  & & & & & & & & & & & & & & & & & &  &\\

$k_8$ & & & && &&   & &  & & & & & & & & $\bullet$ & & & & & & & $\bullet$   &  &\\

$k_9$& $q+2p$ & & &   & $\blacksquare$ &  & & & & & & $\blacksquare$  & & & & & & & $\blacksquare$ & & &  & & & $\blacksquare$ &$\cdots$\\

& & & & &  & &  & & & & & & & & & & & & & & & & & &  &\\

$k_{10}$ & $3p$ & $\blacktriangle$  &  & & & &  & $\blacktriangle$ & & & & & & & $\blacktriangle$ & & & & & & & $\blacktriangle$& & &  &\\

& & & & &  & &  & & & & & & & & & & & & & & & & & &  &\\

$k_{11}$ & & & & &  & &  & & & & & & & & & & & & & & & & &$\bullet$ &  &\\

& & & & &  & &  & & & & & & & & & & & & & & & & & &  &\\

$k_{12}$& $q+3p$ & & &   & $\blacksquare$ &  & & & & & & $\blacksquare$  & & & & & & & $\blacksquare$ & & &  & & & $\blacksquare$ &$\cdots$\\

& & & & &  & &  & & & & & & & & & & & & & & & & & &  &\\

& & & &   & $\vdots$  & & &  & & & &  $\vdots$ & & & & & & &  $\vdots$ & & & & & &   
$\vdots$  &\\\hline
\end{tabular}
\end{center}

}

\end{diagram}

\section{Subsemiheaps of the extended bicyclic  semigroup}

In this section we shall determine all of the subsemiheaps of the extended bicyclic semigroup.  We shall proceed as follows.  First, for an arbitrary subsemiheap $K$ of $E$, we define
\[
\alpha_0=\inf\{\alpha\in\IZ:\exists \beta\in\IZ, a_{\alpha\beta}\in K\},
\]
and
\[
\beta_0=\inf\{\beta\in\IZ:\exists \alpha\in\IZ, a_{\alpha\beta}\in K\}.
\]

We have four mutually exclusive and exhaustive cases, namely, 
\begin{enumerate}
\item {\bf Quadrant} $\alpha_0\ne-\infty, \beta_0\ne -\infty$
\item {\bf Right Half Plane} $\alpha_0=-\infty, \beta_0\ne -\infty$
\item {\bf Lower Half Plane} $\alpha_0\ne-\infty, \beta_0= -\infty$
\item {\bf Full Plane} $\alpha_0=-\infty, \beta_0= -\infty$
\end{enumerate}

\begin{remark}\label{rem:0308221}
We only need to find all of the subsemiheaps of $E$ which are in case (1), since the other cases can be reduced to this case in steps, as follows, which shows that every subsemiheap of the extended bicyclic semigroup is the inductive limit of subsemiheaps in case (1) in the category of semiheaps and semiheap homomorphisms\footnote{In fact, it is an elementary inductive limit since the connecting maps are inclusions (see Theorem~\ref{thm:0211221}).}
\begin{itemize}
\item  If a subsemiheap $K$ of $E$ is in case (2), then 
$K\subset\{a_{ij}:i\in\IZ, j\ge\beta_0\}$ and 
$K=\cup_{\alpha\in\IZ} K^\alpha$, where $K^{\alpha}=K\cap \{a_{ij}:i\ge\alpha, j\ge \beta_0\}$ (which we have denoted by $K_{\alpha,\beta_0}$) is in case (1).

\item If a subsemiheap $K$ of $E$ is in case (3), then 
$K\subset\{a_{ij}:i\ge\alpha_0, j\in\IZ\}$ and 
$K=\cup_{\beta\in\IZ} K_\beta$, where $K_\beta=K\cap \{a_{ij}:i\ge\alpha_0, j\ge \beta\}$ (=$K_{\alpha_0,\beta}$)  is in case (1).

\item If a subsemiheap $K$ of $E$ is in case (4), then 
$K\subset\{a_{ij}:i, j\in\IZ\}$ and 
$K=\cup_{\alpha\in\IZ} K_{(\alpha)}$, where $K_{(\alpha)}=K\cap \{a_{ij}:i\ge\alpha, j\in\IZ\}$  is in case (3).

Alternatively, if  a subsemiheap $K$ of $E$ is in case (4), then  
$K=\cup_{\beta\in\IZ} K^{(\beta)}$, where $K^{(\beta)}=K\cap \{a_{ij}:i\in\IZ, j\ge\beta\}$  is in case (2).

\end{itemize}
\end{remark}

\medskip
Therefore we shall concentrate only on case (1).
Suppose that $\alpha_0\ne-\infty$ and  $\beta_0\ne -\infty$. Then 
$K\subset K_{\alpha_0,\beta_0}=\{a_{pq}:p\ge\alpha_0,q\ge\beta_0\}.
$
We define three parameters as follows:
\[
\overline{\beta}=\sup\{\beta\in\IZ:a_{\alpha_0\beta}\in K\}
\]
\[
\overline{\alpha}=\sup\{\alpha\in\IZ:a_{\alpha\beta_0}\in K\}
\]
\[
\overline{\gamma}=\sup\{k\in\IN_0:a_{\alpha_0+k,\beta_0+k}\in K\}.
\]. 

We shall consider three primary cases:

\[
{\bf 1.}\ \overline{\beta}=\beta_0\quad\quad {\bf 2.}\  \beta_0<\overline{\beta}<\infty\quad\quad {\bf 3.}\ \overline{\beta}=\infty
\]

Each of the cases 1, 2, 3, consists of three further subcases. 

\[
{\bf 1.1}\ \overline{\beta}=\beta_0,\quad \overline{\alpha}=\alpha_0\quad\quad{\bf 1.2}\ \overline{\beta}=\beta_0, \quad \alpha_0<\overline{\alpha}<\infty,\quad {\bf 1.3}\ \overline{\beta}=\beta_0, \overline{\alpha}=\infty.
\]

\[
{\bf 2.1}\ \overline{\beta}=\beta_0<\infty, \overline{\alpha}=\alpha_0\quad\quad{\bf 2.2}\ \beta_0<\overline{\beta}<\infty, \alpha_0<\overline{\alpha}<\infty,\quad\quad {\bf 2.3}\ \beta_0<\overline{\beta}<\infty, \overline{\alpha}=\infty.
\]

\[
{\bf 3.1}\ \overline{\beta}=\infty, \overline{\alpha}=\alpha_0\quad\quad{\bf }\ \overline{\beta}=\infty, \alpha_0<\overline{\alpha}<\infty,\quad\quad {\bf 3.3}\  \overline{\beta}=\infty, \overline{\alpha}=\infty.
\]

Each of these nine cases consists of three further subcases.
 Thus, in order to account for the quadrant case (1), and hence the other three cases,  it will be necessary to consider 27 cases. We summarize the results in the  table on the next page. It is worthy to note that by Diagram~\ref{diagram:1022212}, if $ \overline{\beta}$ is finite, by which we mean, $\beta_0<\overline{\beta}<\infty$, then $a_{\alpha_0,\overline{\beta}}$ is the only point of $K$ of the form $a_{\alpha_0,\beta}$. A similar statement holds for $\overline{\alpha}$. Also, if $\overline{\alpha}=\alpha_0$, or if $\overline{\beta}=\beta_0$, then $a_{\alpha_0,\beta_0}\in K$.  Thus in cases 2.2, 2.3, 3.2, and 3.3, it is necessary to consider the two possibilities: $a_{\alpha_0,\beta_0}\in K$, and $a_{\alpha_0,\beta_0}\not\in K$.
 
We now proceed to analyze all 27 cases.

\medskip

\begin{lemma}\label{lem:1225211}
 In Case 1.1.1 ($\overline{\beta}=\beta_0,\  \overline{\alpha}=\alpha_0,\ \overline{\gamma}=0$),  we have  $K=\{a_{\alpha_0\beta_0}\}$.
\end{lemma}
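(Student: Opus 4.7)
The plan is to argue by contradiction: assume $K$ contains some element $a_{\gamma\delta}$ different from $a_{\alpha_0\beta_0}$ and derive a contradiction with one of the three hypotheses defining Case 1.1.1. The key observation, visible in the formulas of Lemma~\ref{lem:0925212}, is that whenever one applies the triple product to $a_{\alpha_0\beta_0}$ together with any point $a_{\gamma\delta}$ lying strictly southeast of it, one of the resulting elements lands on the main diagonal through $(\alpha_0,\beta_0)$ at a positive offset, which immediately contradicts $\overline{\gamma}=0$.

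First I would verify that $a_{\alpha_0\beta_0}$ actually belongs to $K$. Since $\alpha_0$ is a finite integer and infima of bounded-below subsets of $\IZ$ are attained, there is some $\beta^\ast$ with $a_{\alpha_0,\beta^\ast}\in K$. By definition of $\beta_0$ and $\overline{\beta}$ we have $\beta_0\le\beta^\ast\le\overline{\beta}=\beta_0$, so $\beta^\ast=\beta_0$.

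Now suppose for contradiction that $a_{\gamma\delta}\in K$ with $(\gamma,\delta)\ne(\alpha_0,\beta_0)$. Since $\gamma\ge\alpha_0$ and $\delta\ge\beta_0$ by definition of $\alpha_0$ and $\beta_0$, at least one inequality is strict, and I would split into three subcases. If $\gamma=\alpha_0$ and $\delta>\beta_0$, then $\delta$ belongs to the set whose sup defines $\overline{\beta}$, contradicting $\overline{\beta}=\beta_0$. If $\gamma>\alpha_0$ and $\delta=\beta_0$, the symmetric argument contradicts $\overline{\alpha}=\alpha_0$. The remaining case is $\gamma>\alpha_0$ and $\delta>\beta_0$.

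In this last case I apply Lemma~\ref{lem:0925212} with $(\alpha,\beta)=(\alpha_0,\beta_0)$ (its hypotheses $\gamma\ge\alpha_0$, $\delta\ge\beta_0$ are satisfied). The lemma produces $x_3=a_{\gamma,\,\beta_0+\gamma-\alpha_0}\in K$. Setting $k=\gamma-\alpha_0$, one reads $x_3=a_{\alpha_0+k,\,\beta_0+k}$ with $k>0$, so $k$ lies in the set whose supremum defines $\overline{\gamma}$; hence $\overline{\gamma}\ge k>0$, contradicting $\overline{\gamma}=0$. The only step requiring any attention is the observation that $x_3$ sits precisely on the diagonal through $(\alpha_0,\beta_0)$: this is immediate from the closed form for $x_3$ but is the crucial point that couples the triple-product hypothesis to the parameter $\overline{\gamma}$. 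Combining the three cases gives $K=\{a_{\alpha_0\beta_0}\}$.
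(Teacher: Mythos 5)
Your proof is correct and follows essentially the same route as the paper's: both apply the two-element triple products of Lemma~\ref{lem:0925212} to $a_{\alpha_0\beta_0}$ and a putative second point of $K$, land on the diagonal at a positive offset, and contradict $\overline{\gamma}=0$, with the first row and column ruled out by $\overline{\beta}=\beta_0$ and $\overline{\alpha}=\alpha_0$. Your observation that $x_3$ is unconditional (no case split on the sign of $(\gamma-\alpha_0)-(\delta-\beta_0)$) lets you dispatch an arbitrary off-corner point in one step, which is a slightly cleaner packaging of the paper's row-by-row diagram argument but not a different idea.
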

\noindent{\it Proof.}
In this case, the diagram  is the following, where the bullet represents the element $a_{\alpha_0\beta_0}$, and the circles indicate that no element of $K$ occupies that position.   (Ignore, for the moment, the symbols $\blacksquare,\blacktriangle, \triangle$)

\medskip

\hspace{1.5in}$\bullet$\hspace{.22in}o\hspace{.22in}o\hspace{.22in}o\hspace{.22in}o\hspace{.22in}o
\vspace{.09in}

\hspace{1.5in}o\hspace{.2in}$\triangle$\hspace{.18in}$\blacksquare$
\vspace{.09in}

\hspace{1.5in}o\hspace{.2in}$\blacktriangle$ \hspace{.18in}o
\vspace{.09in}

\hspace{1.5in}o\hspace{.82in}o
\vspace{.09in}

\hspace{1.5in}o\hspace{1.12in}o
\vspace{.09in}





\hspace{1.5in}$\vdots$\hspace{1.4in}$\ddots$

Suppose that $a_{\alpha_0+2,\beta_0+1}$, denoted by $\blacktriangle$, belonged to $K$.  Then by diagram~\ref{diagram:1022213} applied to the points $a_{\alpha_0\beta_0}$ and $\blacktriangle$, the point $a_{\alpha_0+1,\beta_0+1}$, denoted by $\triangle$, would belong to $K$, a contradiction.  So $\blacktriangle$ does not belong to $K$. By the same argument, no element of $K$ resides in the second column of the diagram.

Suppose that $a_{\alpha_0+1,\beta_0+2}$, denoted by $\blacksquare$, belonged to $K$.  Then by diagram~\ref{diagram:1022211} applied to the points $a_{\alpha_0\beta_0}$ and $\blacksquare$, the point $a_{\alpha_0+1,\beta_0+1}$, denoted by $\triangle$, would belong to $K$, a contradiction. So $\blacksquare$ does not belong to $K$. By the same argument, no element of $K$ resides in the second row of the diagram.

Repetition of these two arguments shows that no element of $K$ resides in any column or row of the diagram, other than the first row and column, and therefore $K=\{a_{\alpha_0\beta_0}\}$ contains exactly one element.
\hfill$\Box$


{\begin{center}
{\bf Classification Scheme}
\end{center}

\begin{tabular}{| r|r |c |c |c |c|c|c|}\hline
&Case&subcase & $\overline{\beta}$ & $\overline{\alpha}$ &$\overline{\gamma}$ & exists? & result \\\hline\hline
&&1.1.1.     & $\beta_0$             & $\alpha_0$            &  0                             & yes & Lemma~\ref{lem:1225211}\\
&1.1&1.1.2.     & $\beta_0$             & $\alpha_0$            &   finite                            & yes & Proposition~\ref{prop:1225211}\\
&&1.1.3.     & $\beta_0$             & $\alpha_0$            &   $\infty$                            & yes & Proposition~\ref{prop:1225212}\\\hline

&&1.2.1.     & $\beta_0$             & finite           &   0                         & no &  Lemma~\ref{lem:1211211}  \\
1&1.2&1.2.2.     & $\beta_0$             & finite            &   finite                            & no &    Lemma~\ref{lem:1211211}\\
&&1.2.3.     & $\beta_0$             & finite            &   $\infty$                            & no& Lemma~\ref{lem:1211211}   \\\hline

&&1.3.1.     & $\beta_0$             & $\infty$            &  0                            & no &  Lemma~\ref{lem:1211211}  \\
&1.3&1.3.2.     & $\beta_0$             & $\infty$           &   finite                            & no &  Lemma~\ref{lem:1211211}  \\
&&1.3.3.     & $\beta_0$             & $\infty$            &   $\infty$                           & no & Lemma~\ref{lem:1211211}   \\\hline\hline

&&2.1.1.     & finite             & $\alpha_0$            &   0                           & no& Lemma~\ref{lem:1211212}   \\
&2.1&2.1.2.     & finite             & $\alpha_0$            &   finite                            & no &  Lemma~\ref{lem:1211212}  \\
&&2.1.3.     & finite             & $\alpha_0$            &   $\infty$                           & no &  Lemma~\ref{lem:1211212}  \\\hline

&&2.2.1.     & finite            & finite            &  0                          & no &  Lemma~\ref{lem:1211213}   \\
2&2.2&2.2.2.     & finite             & finite           &   finite                            & no &  Lemma~\ref{lem:1211213}   \\
&&2.2.3.     & finite             &  finite            &   $\infty$                              & no &  Lemma~\ref{lem:1211213}   \\\hline

&&2.3.1.     & finite             & $\infty$           &  0                          & no &  Lemma~\ref{lem:1211214}   \\
&2.3&2.3.2.     & finite             & $\infty$             &   finite                            & no& Lemma~\ref{lem:1211214}    \\
&&2.3.3.     & finite            & $\infty$             &    $\infty$                             & no &  Lemma~\ref{lem:1211214}   \\\hline\hline

&&3.1.1.     & $\infty$             & $\alpha_0$            &  0                          & no &  Lemma~\ref{lem:1211215}  \\
&3.1&3.1.2.     & $\infty$              & $\alpha_0$            &   finite                            & no  & Lemma~\ref{lem:1211215}   \\
&&3.1.3.     & $\infty$             & $\alpha_0$            &    $\infty$                             & no & Lemma~\ref{lem:1211215}   \\\hline

&&3.2.1.     & $\infty$             & finite            &   0                          &no  &  Lemma~\ref{lem:1211215}  \\
3&3.2&3.2.2.     & $\infty$              & finite            &   finite                            & no  & Lemma~\ref{lem:1211215}   \\
&&3.2.3.     & $\infty$              & finite           &   $\infty$                            & no  & Lemma~\ref{lem:1211215}   \\\hline

&&3.3.1.     & $\infty$              & $\infty$           &  0                           & no & Proposition~\ref{prop:1217211}  \\
&3.3&3.3.2.     & $\infty$             & $\infty$            &   finite                            & no  &  Proposition~\ref{prop:1217211}  \\
&&3.3.3.     & $\infty$              & $\infty$           &    $\infty$                            & yes & Proposition~\ref{prop:0102221}   \\\hline\hline

\end{tabular}

\medskip

\begin{proposition}\label{prop:1225211}
In Case 1.1.2 ($\overline{\beta}=\beta_0,\  \alpha_0=\overline{\alpha},\  0<\overline{\gamma}<\infty$), we have
$$
K=
\{a_{\alpha_0,\beta_0}\}\cup\{a_{\alpha_0+k_j,\beta_0+k_j}:1\le j\le n_0\},
$$ where  $1\le k_1<k_2<\cdots<k_{n_0}=\overline{\gamma}$ and  $a_{\alpha_0,\beta_0}$ and $a_{\alpha_0+k_j,\beta_0+k_j}$  are the only elements of $K$ that are in the diagonal.
\end{proposition}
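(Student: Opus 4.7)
The plan is to show that every element of $K$ lies on the main diagonal $\{a_{\alpha_0+k,\beta_0+k}:k\in\IN_0\}$; once that is established, the stated description follows directly from the definition of $\overline{\gamma}$.

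First I would record that $a_{\alpha_0,\beta_0}\in K$: the finite integer infimum $\beta_0$ is attained by some $a_{\alpha,\beta_0}\in K$, and the hypothesis $\overline{\alpha}=\alpha_0$ then forces $\alpha=\alpha_0$.

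For the main step, I would suppose toward contradiction that $a_{\gamma,\delta}\in K$ is off-diagonal and write $h:=\gamma-\alpha_0$, $d:=\delta-\beta_0$, so that $h\ne d$. The hypothesis $\overline{\beta}=\beta_0$ excludes $\gamma=\alpha_0$ (else the point $a_{\alpha_0,\delta}$ with $\delta>\beta_0=\overline{\beta}$ would contradict the definition of $\overline{\beta}$), and dually $\overline{\alpha}=\alpha_0$ excludes $\delta=\beta_0$, so $h,d\ge 1$. I would treat the case $d>h$ in detail; the case $h>d$ is handled identically using $x_5$ and Diagram~\ref{diagram:1022213} in place of $x_4$ and Diagram~\ref{diagram:1022211}. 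Applying Lemma~\ref{lem:0925212} to the pair $a_{\alpha_0,\beta_0},\,a_{\gamma,\delta}$ yields two useful elements of $K$: the diagonal point $x_2=a_{\alpha_0+d,\beta_0+d}$ (recording a new diagonal excess $d$) and the off-diagonal point $x_4=a_{\gamma,\delta+(d-h)}$ in the same row as $a_{\gamma,\delta}$, whose vertical displacement $2d-h$ still strictly exceeds its horizontal displacement $h$. Iterating the lemma on $a_{\alpha_0,\beta_0}$ together with the successive $x_4$'s gives vertical displacements obeying the recursion $d_{n+1}=2d_n-h$, hence $d_n\to\infty$, and the corresponding diagonal elements $a_{\alpha_0+d_n,\beta_0+d_n}\in K$ have unbounded excess. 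This contradicts $\overline{\gamma}<\infty$.

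The main technical point is isolating from the five derived elements of Lemma~\ref{lem:0925212} the pair $(x_2,x_4)$ whose effect in tandem is to record a growing diagonal excess while reproducing the original configuration---this is what makes the iteration possible. Once the off-diagonal case is ruled out, $K$ is a subset of the diagonal through $(\alpha_0,\beta_0)$; listing the nonzero indices in increasing order $k_1<\cdots<k_{n_0}$ and noting $k_{n_0}=\overline{\gamma}$ (attained as a finite integer supremum) gives the stated form of $K$.
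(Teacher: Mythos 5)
Your proposal is correct, but it reaches the conclusion by a genuinely different route from the paper. The paper's proof is a positional case analysis: it first rules out off-diagonal points lying on a row or column through a diagonal point of $K$ (two points on a common row or column generate, via Diagrams~\ref{diagram:1022212} and \ref{diagram:1022214}, a full grid $K^{\sigma}_{\cdot,\cdot}$ containing infinitely many diagonal points), and then disposes of the remaining off-diagonal positions by an eight-case analysis, pairing each candidate with the nearest diagonal point $a_{\alpha_0+k_j,\beta_0+k_j}$ below or to its left and using a single application of Diagram~\ref{diagram:1022211} or \ref{diagram:1022213} to produce a diagonal point at a forbidden position strictly between consecutive $k_j$'s. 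You instead anchor everything at the single corner point $a_{\alpha_0\beta_0}$ (whose membership in $K$ you correctly justify from $\overline{\alpha}=\alpha_0$) and iterate Lemma~\ref{lem:0925212} on the pair $\bigl(a_{\alpha_0\beta_0},\,x_4^{(n)}\bigr)$: the recursion $d_{n+1}=2d_n-h$ gives $d_n=h+2^n(d-h)\to\infty$, and the accompanying diagonal points $x_2^{(n)}=a_{\alpha_0+d_n,\beta_0+d_n}$ force $\overline{\gamma}=\infty$. This is uniform in the position of the off-diagonal point, avoids the row/column preprocessing and the eight subcases entirely, and appeals only to the finiteness of $\overline{\gamma}$ rather than to the exact gaps between the $k_j$'s; the paper's longer argument buys the more refined intermediate pictures (Diagram~\ref{diagram:0127221} and its successors) that it reuses in later cases. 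One small point to tighten when writing this up: in the symmetric case $h>d$ the diagonal witness with growing excess is $x_3=a_{\alpha_0+h_n,\beta_0+h_n}$ rather than $x_2$ (whose excess $d$ stays fixed there), so the phrase ``handled identically'' should be read as replacing the pair $(x_2,x_4)$ by $(x_3,x_5)$; with that substitution the iteration $h_{n+1}=2h_n-d$ goes through verbatim.
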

\noindent{\it Proof.} In this case, in Diagram~\ref{diagram:0127221}, the bullets represent some of the elements of $K$ residing on the diagonal, the circles indicate that no element of $K$ occupies that position, and the dots represent both the finite number of points of $K$ on the diagonal together with some positions on the diagonal not containing points of $K$ (Ignore for the moment,  the symbols $\circledcirc, \boxdot $ which represent two elements of $K$ lying on the diagonal, and the symbols $\blacksquare,\square,\blacktriangle, \triangle$).
The symbol $\clubsuit$ represents the element $a_{\alpha_0+k_{n_0},\beta_0+k_{n_0}}$. We shall show that all off-diagonal positions are not occupied by elements of $K$, which means that $K=\{a_{\alpha_0,\beta_0}\}\cup\{a_{\alpha_0+k_j,\beta_0+k_j}:1\le j\le n_0\}$. 

Suppose that for $k_j\le\ell<k_{j+1}$, the point $a_{\alpha_0+k_{j+1}, \beta_0+\ell}$, denoted by $\blacksquare$ in Diagram~\ref{diagram:0127221}, belonged to $K$. Then   by Diagram~\ref{diagram:1022212}, starting with $\blacksquare$ and $a_{\alpha_0+k_{j+1},\beta_0+k_{j+1}}$, denoted by $\boxdot$ in Diagram~\ref{diagram:0127221}, shows that
\[
K\supset K_{\alpha_0+k_{j+1},\beta_0+\ell}^{k_{j+1}-\ell}=\{a_{\alpha_0+k_{j+1}+m(k_{j+1}-\ell),\beta_0+\ell+n(k_{j+1}-\ell)}:m,n\in\IN_0\}.
\]
Then choosing $m=n-1$, so that $$k_{j+1}+m(k_{j+1}-\ell)=\ell+n(k_{j+1}-\ell)$$ and letting $n\rightarrow\infty$ shows that
 there are infinitely many points of $K$ on the diagonal, a contradiction, so $\blacksquare\not\in K$. 
  The same argument applies to every point on each row determined by $\alpha_0+k_{j+1}$  to the left of $a_{\alpha_0+k_{j+1},\beta_0+k_{j+1}}$ for $0\le j\le n_0-1$.

\begin{diagram}\label{diagram:0127221}
\begin{center}
\begin{tabular}{rrrrrcrrrcr}
&&&$k_j$&$\ell$&$k_{j+1}$&&$k_i$&&$k_{n_0}$&\\
&$\bullet$&$\circ$&$\circ$&$\circ$&$\circ$&$\circ$&$\circ$&$\circ$&$\circ$&$\cdots$\\
&$\circ$&$\ddots$&&&&&&&\\
$k_j$&$\circ$&&$\circledcirc$&$\blacktriangle$&&&&&\\
$\ell$&$\circ$&&$\triangle$&$\ddots$&$\square$&&&\\
$k_{j+1}$&  $\circ$& &&$\blacksquare$&$\boxdot$&&&&\\
&  $\circ$& &&&&$\ddots$&&&\\
&  $\circ$& &&&&&$\bullet$&&\\
&  $\circ$& &&&&&&$\ddots$&&\\
&  $\circ$& &&&&&&&$\clubsuit$&\\
&  $\vdots$& &&&&&&&&$\ddots$\\
\end{tabular}
\end{center}
\end{diagram}

\smallskip

\medskip

 Suppose   now that for $k_j<\ell\le k_{j+1}$, the point $a_{\alpha_0+k_{j}, \beta_0+\ell}$, denoted by $\blacktriangle$ in Diagram~\ref{diagram:0127221}, belonged to $K$. Then   by Diagram~\ref{diagram:1022212}, starting with $a_{\alpha_0+k_{j},\beta_0+k_{j}}$, denoted by $\circledcirc$ in Diagram~\ref{diagram:0127221}, and $\blacktriangle$, shows that
\[
K\supset K_{\alpha_0+k_{j},\beta_0+\ell}^{\ell-k_{j}}=
\{a_{\alpha_0+k_{j}+m(\ell-k_j),\beta_0+k_j+n(\ell-k_j)}:m,n\in\IN_0\}.
\]
Then choosing $m=n$,  and letting $n\rightarrow\infty$ show that
 there are infinitely many points of $K$ on the diagonal, a contradiction, so $\blacktriangle\not\in K$. 
  The same argument applies to every point on each row $\alpha_0+k_j$  to the right of $a_{\alpha_0+k_{j},\beta_0+k_{j}}$ for $0\le j\le n_0$. 
  
 Thus all rows containing an element of $K$ on the diagonal do not contain any other elements of $K$, as in the following diagram.  
 
 \begin{center}
\begin{tabular}{rrrrrcrrrcr}
&&&$k_j$&$\ell$&$k_{j+1}$&&$k_i$&&$k_{n_0}$&\\
&$\bullet$&$\circ$&$\circ$&$\circ$&$\circ$&$\circ$&$\circ$&$\circ$&$\circ$&$\cdots$\\
&$\circ$&$\ddots$&&&&&&&\\
$k_j$&$\circ$&$\circ$&$\bullet$&$\circ$&$\circ$&$\circ$&$\circ$&$\circ$&$\circ$&$\cdots$\\
$\ell$&$\circ$&&&$\ddots$&&&&\\
$k_{j+1}$&  $\circ$&$\circ$ &$\circ$&$\circ$&$\bullet$&$\circ$&$\circ$&$\circ$&  $\circ$&$\cdots$\\
&  $\circ$& &&&&$\ddots$&&&\\
$k_i$&  $\circ$& $\circ$&$\circ$&$\circ$&$\circ$&$\circ$&$\bullet$&$\circ$&$\circ$&$\cdots$\\
&  $\circ$& &&&&&&$\ddots$&&\\
$k_{n_0}$&  $\circ$&$\circ$ &$\circ$&$\circ$&$\circ$&$\circ$&$\circ$&$\circ$&$\clubsuit$&$\cdots$\\
&  $\vdots$& &&&&&&&&$\ddots$\\
\end{tabular}
\end{center}

\medskip

 A parallel argument, using Diagram~\ref{diagram:1022214} shows 
that all columns containing an element of $K$ on the diagonal do not contain any other elements of $K$. For completeness, we include the details.\smallskip

Suppose that for $k_j\le\ell<k_{j+1}$, the point $a_{\alpha_0+\ell, \beta_0+k_{j+1}}$, denoted by $\square$ in Diagram~\ref{diagram:0127221}, belonged to $K$. Then   by Diagram~\ref{diagram:1022214}, starting with $\square$ and $a_{\alpha_0+k_{j+1},\beta_0+k_{j+1}}$, denoted by $\boxdot$, shows that
\[
K\supset K_{\alpha_0+\ell,\beta_0+k_{j+1}}^{k_{j+1}-\ell}=
\{a_{\alpha_0+\ell+m(k_{j+1}-\ell),\beta_0+k_{j+1}+n(k_{j+1}-\ell)}:m,n\in\IN_0\}.
\]
Then choosing $m=n+1$, so that $$k_{j+1}+n(k_{j+1}-\ell)=\ell+m(k_{j+1}-\ell)$$ and letting $n\rightarrow\infty$ shows that
 there are infinitely many points of $K$ on the diagonal, a contradiction, so $\square\not\in K$. 
  The same argument applies to every point on each column determined by $\beta_0+k_{j+1}$  above  $a_{\alpha_0+k_{j+1},\beta_0+k_{j+1}}$ for $0\le j\le n_0-1$. \smallskip

 Suppose now that for $k_j\le\ell<k_{j+1}$, the point $a_{\alpha_0+\ell, \beta_0+k_j}$, denoted by $\triangle$ in Diagram~\ref{diagram:0127221}, belonged to $K$. Then   by Diagram~\ref{diagram:1022214}, starting with $\triangle$ and $a_{\alpha_0+k_j,\beta_0+k_j}$, denoted by $\circledcirc$, shows that
\[
K\supset K_{\alpha_0+k_{j},\beta_0+k_j}^{\ell-k_j}=
\{a_{\alpha_0+k_{j}+m(\ell-k_j),\beta_0+k_j+n(\ell-k_j)}:m,n\in\IN_0\},
\]
Then choosing $m=n$, so that $$k_j+n(\ell-k_j)=k_j+m(\ell-k_j)$$ and letting $n\rightarrow\infty$ shows that
 there are infinitely many points of $K$ on the diagonal, a contradiction, so $\triangle\not\in K$. 
  The same argument applies to every point on each column determined by $\beta_0+k_j$  above  $a_{\alpha_0+k_j,\beta_0+k_j}$ for $0\le j\le n_0$. \smallskip

To complete the proof, we now show that no point $a_{\alpha_0+m,\beta_0+n}$, with $m\ne n$ can belong to $K$. By what was just proved, it suffices to consider points which are not on a row or column containing a point of $K$, that is, $m\ne k_j$ for all $j$ and $n\ne k_\ell$ for all $\ell$.

We shall refer to the  diagram below, which depicts the eight possible locations for the element $a_{\alpha_0+m,\beta_0+n}$, reflecting the cases $m>n$ and $m<n$, namely,

\begin{enumerate}
\item $m> k_{n_0}\ge k_{\ell+1}>n> k_\ell$, denoted by $\blacksquare$
\item $k_{\ell+1}>m> k_\ell\ge k_{j+1}>n> k_j$, denoted by $\blacktriangle$
\item $k_{j+1}>m>n> k_j$, denoted by $\blacktriangledown$
\item $m>n> k_{n_0}$, denoted by $\blacktriangleleft$
\item $n> k_{n_0}\ge k_{\ell+1}>m> k_\ell$, denoted by $\square$
\item $k_{\ell+1}>n> k_\ell\ge k_{j+1}>m> k_j$, denoted by $\triangle$
\item $k_{j+1}>n>m> k_j$, denoted by $\triangledown$
\item $n>m> k_{n_0}$, denoted by $\triangleleft$
\end{enumerate}

Suppose first that $m>n$, for example case (2), $k_{\ell+1}>m_2>k_\ell\ge k_{j+1}>n_2>k_j$. 
 We consider the two points $a_{\alpha_0+k_j,\beta_0+k_j}$ and $\blacktriangle=a_{\alpha_0+m_2,\beta_0+n_2}$. These two points are vertices of a triangle with height $h=m_2-k_j$ greater than the base $b=n_2-k_j$, so $h-b=m_2-n_2$.  Then by Lemma~\ref{lem:0925212} (see Diagram~\ref{diagram:1022213}), the point $x_2=a_{\alpha_0+k_j+\beta_0+m_2-\alpha_0-k_j,\beta_o+m_2}=a_{\alpha_0+m_2,\beta_0+m_2}$ would belong to $K$, which is a contradiction since $k_\ell<m_2<k_{\ell+1}$.

  \begin{center}
\begin{tabular}{r r c c c c c c c c c  c c c c c c c c  c}
&& & &     & & &&        &  & &  & $n_6$, &  &   & &$n_4,$ &&&  \\ 
&& & &     &$n_3$ &$n_2$ &$n_7$ &        &  & &  & $n_1$ &  &   & &$n_8$& $n_5$&&  \\ 
&& & & $k_j$     & & & & $k_{j+1}$       &  & & $k_\ell$ &  & $k_{\ell+1}$ &   & $k_{n_0}$ && &&  \\ 
&& $\bullet$ &      $\circ$ & $\circ$ & $\circ$ &  $\circ$ & $\circ$ & $\circ$  & $\circ$ & $\circ$ & $\circ$  & $\circ$ & $\circ$ & $\circ$ & 
 $\circ$  & $\circ$ & $\circ$ & $\circ$ & $\cdots$  \\   
 &  & $\circ$  & & $\circ$  &&&& $\circ$ &&&   $\circ$        & & $\circ$  &  & &&&\\
& $k_j$ & $\circ$  & & $\bullet$ &&&& $\circ$  &&& $\circ$  & & $\circ$  && $\circ$ &$\circ$  & $\circ$ & $\circ$  &$\cdots$\\
$m_7$  & & $\circ$ & & & && $\triangledown$ &&&&&&&&&&&&\\
$m_6$ &  & $\circ$ & & $\circ$ & & & & $\circ$ &  & &  $\circ$ & $\triangle$& $\circ$ & &  &   & & & \\
$m_3$ &  & $\circ$ & & &  $\blacktriangledown$ & & & & & & & & & & & & & &  \\
&   $k_{j+1}$ & $\circ$ & & $\circ$ & & & & $\bullet$ & & & $\circ$ & & $\circ$ & & $\circ$ & $\circ$ & $\circ$ & $\circ$ & $\cdots$  \\  
 &  & $\circ$  & & $\circ$  &&&& $\circ$ &&&   $\circ$        & & $\circ$  &  & &&&\\
  &   & $\circ$  & & $\circ$  &&&& $\circ$ &&&   $\circ$        & & $\circ$  &  & &&&\\
 &  $k_\ell$  & $\circ$ & & $\circ$ & & & & $\circ$ & & & $\bullet$ & & $\circ$ & & $\circ$ & $\circ$ & $\circ$ & $\circ$& $\cdots$ \\
$m_2,m_5$&   & $\circ$  & & $\circ$  && $\blacktriangle$&& $\circ$ &&&   $\circ$        & & $\circ$  &  &  & & $\square$ &\\
 &  $k_{\ell+1}$  & $\circ$ & & $\circ$ & & & & $\circ$ & & & $\circ$ & & $\bullet$ & & $\circ$ & $\circ$ & $\circ$ & $\circ$& $\cdots$ \\
 &  & $\circ$ & & $\vdots$ & & & & $\vdots$& &  &   $\vdots$&  & $\vdots$ & $\ddots$& $\vdots$& & && \\
 &   $k_{n_0}$  & $\circ$ & & $\circ$ & & & & $\circ$ & & & $\circ$ & & $\circ$ & & $\clubsuit$ & $\circ$ & $\circ$ & $\circ$& $\cdots$ \\
   &  & $\circ$ & & $\circ$ & & & & $\circ$ & & & $\circ$ & & $\circ$ & & $\circ$ & $\circ$ &  & &  \\
$m_1,m_8$  &   & $\circ$ & & $\circ$ & & & & $\circ$ & & & $\circ$ &  $\blacksquare$ & $\circ$  && $\circ$ &$\triangleleft$ & $\circ$ & &  \\
$m_4$   & & $\circ$ & & $\circ$ & & & & $\circ$ & & & $\circ$ &  & $\circ$  && $\circ$ &$\blacktriangleleft$ & $\circ$ & &  \\
&& $\vdots$ & & $\vdots$  & & & & $\vdots$   & & & $\vdots$  & & $\vdots$ & & $\vdots$ &  & & & $\ddots$  \\
 \end{tabular}
\end{center}

Suppose next  that $m<n$, for example, case (6), $k_j<m_6<k_{j+1}\le k_\ell<n_6<k_{\ell+1}$
  We  again consider the two points $a_{\alpha_0+k_j,\beta_0+k_j}$ and $\triangle=a_{\alpha_0+m_6,\beta_0+n_6}$. These two points are vertices of a triangle with height $h=m_6-k_j$ less than the base $b=n_6-k_j$ and $b-h=n_6-m_6$.  Then by Lemma~\ref{lem:0925212} (see Diagram~\ref{diagram:1022211}), the point $x_1=a_{\alpha_0+k_j+\beta_0+n_6-\beta_0-k_j,\beta_0+n_6 }=a_{\alpha_0+n_6,\beta_0+n_6}$ would belong to $K$, which is a contradiction since $k_\ell<n_6<k_{\ell+1}$.

The same two-part argument works in all the other cases, more precisely, as follows:
\begin{itemize}
\item For case (3), $k_j<n_3<m_3<k_{j+1}$, Diagram 3 applied to $a_{\alpha_0+k_j,\beta_0+k_j}$ and $\blacktriangledown$ yields $x_2=a_{\alpha_0+n_3,\beta_0+n_3}$ 
\item For case (7), $k_j<m_7<n_7<k_{j+1}$ , Diagram 1 applied to $a_{\alpha_0+k_j,\beta_0+k_j}$ and $\triangledown$ yields $x_3=a_{\alpha_0+m_7,\beta_0+m_7}$
\item For case (4), $m_4> n_4> k_{n_0}$, Diagram 3 applied to $a_{\alpha_0+k_{n_0},\beta_0+k_{n_0}}$ and $\blacktriangleleft$ yields $x_2=a_{\alpha_0+n_4,\beta_0+n_4}$
\item For case (8),  $n_8>m_8> k_{n_0}$, 
 Diagram 1 applied to $a_{\alpha_0+k_{n_0},\beta_0+k_{n_0}}$ and $\triangleleft$ yields $x_3=a_{\alpha_0+m_8,\beta_0+m_8}$
\item For case (1), $m_1> k_{n_0}\ge k_{\ell+1}>n_1> k_\ell$, Diagram 3 applied to $a_{\alpha_0+k_\ell,\beta_0+k_\ell}$ and $\blacksquare$ yields $x_2=a_{\alpha_0+n_1,\beta_0+n_1}$
\item For case (5), $n_5> k_{n_0}\ge k_{\ell+1}>m_5> k_\ell$, Diagram 1 applied to $a_{\alpha_0+k_\ell,\beta_0+k_\ell}$ and $\square$ yields $x_3=a_{\alpha_0+m_5,\beta_0+m_5}$.\hfill$\Box$
\end{itemize}

This completes the analysis of case 1.1.2. 

 \begin{proposition}\label{prop:1225212}
In Case 1.1.3: ($\overline{\beta}=\beta_0,\ \overline{\alpha}=\alpha_0, \ \overline{\gamma}=\infty$), 
either
$K=\{a_{\alpha_0\beta_0}\}\cup\{a_{\alpha_0+k_j,\beta_0+k_j}:j\in\IN\}$, or there exist $\sigma,\ell_0\in\IN$, such that (see Examples~\ref{ex:1201211} and ~\ref{example:1220211})
 \begin{equation}\label{eq:0116223}
  K=\{a_{\alpha_0\beta_0}\}\cup\{a_{\alpha_0+k_j,\beta_0+k_j}:j=1,\ldots \ell_0-1\}\cup K_{\alpha_0+k_{\ell_0},\beta_0+k_{\ell_0}}^\sigma,\hbox{ or}
  \end{equation}
 \begin{equation}\label{eq:0116224}
  K=\{a_{\alpha_0\beta_0}\}\cup\{a_{\alpha_0+k_i,\beta_0+k_i}:1\le i<\ell_0\}\cup 
 \left(\bigcup_{i=\ell_0}^{j} K^{\sigma}_{\alpha_0+k_i,\beta_0+k_i}\right)
     \end{equation}
 where $1\le k_1<k_2<\cdots<k_n<\cdots <\infty$, $a_{\alpha_0,\beta_0}$ and $a_{\alpha_0+k_i,\beta_0+k_i}$  are the only elements of $K$ that are in the diagonal, and in
 (\ref{eq:0116224}),
  $\sigma=k_j-k_{\ell_0}$ where $k_j$ is such that no point 
  $a_{\alpha_0+k_{\ell_0},\beta_0+k_p}$ belongs to $K$ for $1\le p<j$.   If $\ell_0=1$, then the term $\{a_{\alpha_0+k_i,\beta_0+k_i}:1\le i<\ell_0\}$ is missing in (\ref{eq:0116223}) and (\ref{eq:0116224}).
\end{proposition}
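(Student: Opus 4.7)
The plan is to bifurcate on whether $K$ contains any off-diagonal element, and in the off-diagonal case to pinpoint the smallest diagonal level at which off-diagonal propagation begins. If $K$ has no off-diagonal element, we immediately obtain $K=\{a_{\alpha_0,\beta_0}\}\cup\{a_{\alpha_0+k_j,\beta_0+k_j}:j\in\IN\}$; the subsemiheap property follows from (\ref{eq:0927211}), since the triple product of three diagonal elements $a_{ii}a_{jj}^*a_{\ell\ell}$ reduces to $a_{\max(i,j,\ell),\max(i,j,\ell)}$, and $\{0\}\cup\{k_j\}_{j\in\IN}$ is closed under maxima.

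Otherwise, by Remark~\ref{rem:0315221} we may assume some $a_{\alpha_0+p,\beta_0+q}\in K$ with $p<q$. Combining this point with $a_{\alpha_0,\beta_0}$ via Lemma~\ref{lem:0925212} (Diagram~\ref{diagram:1022211}) forces $a_{\alpha_0+p,\beta_0+p}$, $a_{\alpha_0+q,\beta_0+q}$, $a_{\alpha_0+q,\beta_0+p}\in K$, so both $p$ and $q$ lie in $\{0\}\cup\{k_j:j\in\IN\}$. Set $k_0:=0$, let $\ell_0$ be the smallest index such that the row $\alpha_0+k_{\ell_0}$ contains an off-diagonal element of $K$, let $j$ be the smallest index $>\ell_0$ with $a_{\alpha_0+k_{\ell_0},\beta_0+k_j}\in K$, and set $\sigma:=k_j-k_{\ell_0}$. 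Iterating Diagram~\ref{diagram:1022212} on $\{a_{\alpha_0+k_{\ell_0},\beta_0+k_{\ell_0}},\,a_{\alpha_0+k_{\ell_0},\beta_0+k_j}\}$ generates $a_{\alpha_0+k_{\ell_0},\beta_0+k_{\ell_0}+m\sigma}\in K$ for all $m\in\IN_0$; the adjoint yields the corresponding column, and then Lemma~\ref{lem:0925212} fills out the entire sublattice $K_{\alpha_0+k_{\ell_0},\beta_0+k_{\ell_0}}^\sigma\subseteq K$.

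For $i<\ell_0$, the argument of Proposition~\ref{prop:1225211}---applied here with $a_{\alpha_0+k_i,\beta_0+k_i}$ playing the role of $a_{\alpha_0,\beta_0}$ and invoking the minimality of $\ell_0$---shows that $a_{\alpha_0+k_i,\beta_0+k_i}$ is isolated in its row and column. For $i\ge\ell_0$, each diagonal point $a_{\alpha_0+k_i,\beta_0+k_i}$ is either already in the main quarter plane (when $k_i-k_{\ell_0}\in\sigma\IN_0$) or else combines with $K_{\alpha_0+k_{\ell_0},\beta_0+k_{\ell_0}}^\sigma$ via Lemma~\ref{lem:0925211} to force the shifted sublattice $K_{\alpha_0+k_i,\beta_0+k_i}^\sigma\subseteq K$. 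The common step $\sigma$ follows from the minimality defining $j$: any smaller effective step would produce $a_{\alpha_0+k_{\ell_0},\beta_0+k_{\ell_0}+\sigma'}\in K$ with $\sigma'<\sigma$, contradicting the choice of $j$. A final case analysis, parallel in spirit to the eight-case enumeration at the end of Proposition~\ref{prop:1225211} but using the four branches of Lemma~\ref{lem:0925211} applied to any hypothetical stray off-diagonal element of $K$, shows that no element of $K$ lies outside $\bigcup_{i=\ell_0}^{j}K_{\alpha_0+k_i,\beta_0+k_i}^\sigma$. Accordingly, $K$ falls into form (\ref{eq:0116223}) when $j=\ell_0+1$ and into form (\ref{eq:0116224}) when $j>\ell_0+1$; in both cases existence as a subsemiheap is covered by Examples~\ref{ex:1201211} and~\ref{example:1220211}.

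The main obstacle is the uniformity of $\sigma$ across all quarter planes present in $K$, together with the ruling out of stray off-diagonal elements. Both require systematically combining a hypothetical offending element with previously constructed quarter-plane points and isolated diagonal points, using all four branches of Lemma~\ref{lem:0925211}, to derive either a contradiction with the minimality of $\sigma$ (or $\ell_0$) or with $\overline{\alpha}=\alpha_0$ and $\overline{\beta}=\beta_0$.
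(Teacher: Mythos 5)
Your overall skeleton matches the paper's: split on whether $K$ is purely diagonal, show via Lemma~\ref{lem:0925212} that any off-diagonal point forces its row and column indices onto the diagonal of $K$, locate the first ``ample'' diagonal level $k_{\ell_0}$, extract the step $\sigma=k_j-k_{\ell_0}$, and generate quarter-plane lattices $K^{\sigma}$. The genuine gap is where you assert that ``a final case analysis \dots shows that no element of $K$ lies outside $\bigcup_{i=\ell_0}^{j}K^{\sigma}_{\alpha_0+k_i,\beta_0+k_i}$'' and that the uniformity of $\sigma$ ``follows from the minimality defining $j$.'' That is precisely the hard core of the proposition, and minimality of $j$ alone does not deliver it: a priori some other diagonal level $k_i$ with $i>\ell_0$ could be ample with a different (even smaller, or incommensurable) minimal step, and a priori the diagonal levels $k_i$ beyond $k_j$ could occupy infinitely many residues modulo $\sigma$, which would contradict the finite union in (\ref{eq:0116224}). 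In the paper this is not done inside the proof of the proposition at all; instead $K\cap K_{\alpha_0+k_{\ell_0},\beta_0+k_{\ell_0}}$ is shown to fall into case 3.3.3, and the conclusion is imported from the machinery developed there: Lemma~\ref{lem:1201211}(a)--(g) eliminating whole families of candidate points, Proposition~\ref{prop:1213211} proving by an explicit congruence computation that every diagonal level beyond the period is congruent modulo the period to one of the finitely many levels below it, and Proposition~\ref{prop:1211211} handling the single-lattice case. Your proposal names this obstacle in its final paragraph but does not supply the argument, so as written it establishes only containment of a union of lattices in $K$, not the reverse inclusion nor the finiteness of the union.

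Two smaller points. The column $\{a_{\alpha_0+k_{\ell_0}+m\sigma,\beta_0+k_{\ell_0}}:m\in\IN_0\}$ is not obtained by ``taking the adjoint,'' since the adjoint carries $K$ onto $K^{*}$, which need not equal $K$; it comes from the point $x_1$ of Lemma~\ref{lem:0925212} applied to the two row points (Diagram~\ref{diagram:1022212}), or from Diagram~\ref{diagram:1022214}. Likewise the reduction ``we may assume $p<q$'' via Remark~\ref{rem:0315221} needs the (true, but unstated) observation that the family of target forms is stable under the adjoint anti-isomorphism.
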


\begin{proof}
In this case, the diagram  is the following, where the bullet represents the element $a_{\alpha_0\beta_0}$, the circles indicate that no element of $K$ occupies that position, and the dots indicate that infinitely many elements of $K$ reside in the diagonal.  

\medskip

$\bullet$\hspace{.28in}o\hspace{.25in}o\hspace{.25in}$\cdots$
\vspace{.09in}

o\hspace{.22in}$\ddots$
\vspace{.09in}

o\hspace{.52in}$\ddots$
\vspace{.09in}

$\vdots$\hspace{.82in}$\ddots$
\vspace{.09in}

We  consider a point $a_{\alpha_0+m,\beta_0+n}$, denoted by $\blacksquare$ in Diagram~\ref{diagram:0116221},  with $m\ne n$ and $m\ne k_j, n\ne k_j$ for every $j\ge 1$.

Suppose first that $m>n$, more precisely, $k_{j+1}>m>k_j\ge k_{\ell+1}>n>k_{\ell}$.  We consider the two points $a_{\alpha_0+k_\ell,\beta_0+k_\ell}$ (an element of $K$ on the diagonal), denoted by $\square$ in Diagram~\ref{diagram:0116221}, and $a_{\alpha_0+m,\beta_0+n}=\blacksquare$. 
These two points are vertices of a right triangle with height $h=m-k_\ell$ greater than the base $b=n-k_\ell$, so $h-b=m-n$.  Then by Lemma~\ref{lem:0925212} (see Diagram~\ref{diagram:1022213}), the point $x_2=a_{\alpha_0+k_\ell+\beta_0+n-\beta_0-k_\ell,\beta_o+m}=a_{\alpha_0+n,\beta_0+n}$, denoted by $\circledcirc$, would belong to $K$, which is a contradiction since $k_\ell<n<k_{\ell+1}$. Therefore all elements below the diagonal which are neither located on a row nor on a column  containing a diagonal point of  $K$, do not belong to $K$.

\medskip
\begin{diagram}\label{diagram:0116221}
\

\hspace{.78in}$k_\ell$ \hspace{.13in}\hspace{.06in}$k_{\ell+1}$\hspace{.5in}$k_j$ \hspace{.05in}\hspace{.15in}$k_{j+1}$ 

\hspace{.35in}$\bullet$\hspace{.15in}o\hspace{.15in}o\hspace{.15in}o\hspace{.15in}o\hspace{.15in}    o\hspace{.15in}o\hspace{.15in}o\hspace{.15in}o\hspace{.15in}o\hspace{.15in}o\hspace{.15in}o\hspace{.15in}$\cdots$

\hspace{.35in}o\hspace{.1in}$\ddots$
\vspace{.02in}

$k_\ell$\hspace{.17in} o \hspace{.32in}$\square$

$n(m)$\hspace{.04in}o\hspace{.25in}\hspace{.32in}$\circledcirc$\hspace{1.1in}$\blacktriangle$
\vspace{.06in}

$k_{\ell+1}$\hspace{.06in} o \hspace{.75in}$\bullet$

\hspace{.4in}o\hspace{1in}$\ddots$

\hspace{.42in}o\hspace{1.25in}$\ddots$ 
\vspace{.01in}

$k_j$\hspace{.28in}o\hspace{1.55in}$\bullet$

$m(n)$\hspace{.1in}o\hspace{.55in}$\blacksquare$\hspace{1in} $\circ$
\vspace{.03in}

$k_{j+1}$\hspace{.15in}o\hspace{1.95in}$\bullet$

\hspace{.42in}o\hspace{2.2in}$\ddots$ 
\vspace{.01in}


\end{diagram}
\medskip

Suppose next  that $m<n$, more precisely, $k_\ell<m<k_{\ell+1}\le k_j<n<k_{j+1}$
  We  again consider the two points $a_{\alpha_0+k_\ell,\beta_0+k_\ell}$, denoted by $\square$ in Diagram~\ref{diagram:0116221}, and $a_{\alpha_0+m,\beta_0+n}$, denoted by $\blacktriangle$ in  Diagram~\ref{diagram:0116221}. These two points are vertices of a right triangle with height $h=m-k_\ell$ smaller than the base $b=n-k_\ell$ and $b-h=n-m$.  Then by Lemma~\ref{lem:0925212} (see Diagram~\ref{diagram:1022211}), the point $x_1=a_{\alpha_0+k_l+\beta_0+n-\beta_0-k_\ell,\beta_0+n }=a_{\alpha_0+n,\beta_0+n}$, denoted by $\circledcirc$, would belong to $K$, which is a contradiction since $k_j<n<k_{j+1}$.

  The same two part argument works in the cases $k_j<m<n<k_{j+1}$ and $k_j<n<m<k_{j+1}$. Therefore all elements above or below the diagonal which are not located on a row or on a column  containing a diagonal point of  $K$, do not belong to $K$.

 We now have Diagram~\ref{diagram:0116222}.  Thus the only  off-diagonal points that can possibly belong to $K$ are those that are located either on a row containing a diagonal point of $K$, indicated by $\cdots$, or on a column
containing a diagonal point of $K$, indicated by vertical dots. 

 We next show that points which lie on a row or column, but not both, cannot belong to $K$.  
  
  Consider first, for any row determined by $k_j$ and any $n$ with $n \ne k_m$ for all $m$, the element $a_{\alpha_0+k_j,\beta_0+n}$, denoted by $\blacksquare$ in Diagram~\ref{diagram:0116223}, and suppose it belonged to $K$.  Then by Lemma~\ref{lem:0925211}(iii), 
  \[
  a_{\alpha_0\beta_0}a^*_{\alpha_0+k_j,\beta_0+n}a_{\alpha_0+k_j,\beta_0+n}=a_{\alpha_0+n,\beta_0+n}\hbox{ (denoted by $\square$)}
  \]
  would belong to $K$, a contradiction since $n \ne k_m$ for every $m$.
  
Next,   for any column determined by $k_j$ and any $n$ with $n \ne k_m$ for all $m$, consider the element $a_{\alpha_0+n,\beta_0+k_j}$, denoted by $\blacktriangle$ in Diagram~\ref{diagram:0116223}, and suppose it belonged to $K$.  Then by Lemma~\ref{lem:0925211}(i), 
  \[
  a_{\alpha_0+n,\beta_0+k_j}a^*_{\alpha_0+n,\beta_0+k_j}a_{\alpha_0\beta_0}=a_{\alpha_0+n,\beta_0+n} \hbox{ (denoted by $\square$)}
 \]
  would belong to $K$, a contradiction since $n \ne k_m$ for every $m$.
  \medskip
  
  We now have Diagram~\ref{diagram:0116224}.
  Thus the only  off-diagonal points that can possibly belong to $K$ are those, denoted by $\blacksquare$, that are located simultaneously on a row containing a diagonal point of $K$ and a column
containing a diagonal point of $K$.

  If  none of the  off-diagonal elements $\blacksquare$ belong to $K$, then obviously $K=\{a_{\alpha_0\beta_0}\}\cup\{a_{\alpha_0+k_j,\beta_0+k_j}:j\in\IN\}$.  We next consider the case that some of the elements $\blacksquare$ in Diagram~\ref{diagram:0116224} belong to $K$.

\begin{diagram}\label{diagram:0116222}

\begin{center}
\begin{tabular}{r ccccccccccccc}
& & & $k_\ell$ &  & $k_{\ell+1}$ & & & $k_j$ &  & $k_{j+1}$ & & & \\ 
 & $\bullet$ & o & o & o & o & o & o & o & o & o & o & o & $\cdots$ \\   
& o & $\ddots$  & $\vdots$ & o & $\vdots$ & o & o & $\vdots$ & o & $\vdots$ & o &  o & $\cdots$\\ 
$k_\ell$ & o & $\cdots$ & $\bullet$ & $\cdots$ & $\vdots$ & $\cdots$ & $\cdots$ & $\vdots$ & $\cdots$ & $\vdots$ & $\cdots$ & $\cdots$ & $\cdots$ \\ 
 & o & o  & $\vdots$ &  o & $\vdots$ & o & o & $\vdots$ & o & $\vdots$ & o &  o& $\cdots$\\  
$k_{\ell+1}$ & o & $\cdots$ & $\vdots$ & $\cdots$ & $\bullet$ & $\cdots$ & $\cdots$ & $\vdots$ & $\cdots$ & $\vdots$ & $\cdots$ & $\cdots$ & $\cdots$ \\ 
& o & o  & $\vdots$ & o  & $\vdots$ & o & o & $\vdots$ & o & $\vdots$ & o &  o& $\cdots$\\ 
& o & & & & &  & $\ddots$  & & & & & & \\ 
$k_j$ & o & $\cdots$ & $\vdots$ & $\cdots$ & $\vdots$ & $\cdots$ & $\cdots$ & $\bullet$ & $\cdots$ & $\vdots$ & $\cdots$ & $\cdots$ & $\cdots$ \\ 
 & o & o  & $\vdots$ & o  & $\vdots$ & o & o & $\vdots$ & o & $\vdots$ & o &  o& $\cdots$\\ 
$k_{j+1}$ & o & $\cdots$ & $\vdots$ & $\cdots$ & $\vdots$ & $\cdots$ & $\cdots$ & $\vdots$ & $\cdots$ & $\bullet$ & $\cdots$ & $\cdots$ & $\cdots$ \\ 
& o & o  & $\vdots$ & o  & $\vdots$ & o & o & $\vdots$ & o & $\vdots$ & o &  o& $\cdots$\\ 
& $\vdots$ & & & & & & & & & &  $\ddots$ &   &
\end{tabular}
\end{center}
  \end{diagram}
  
  A row determined by $k_j$ for which no element other than $a_{\alpha_0+k_j,\beta_0+k_j}$  belongs to $K$, that is,  $a_{\alpha_0+k_j,\beta_0+k_j}\in K$, and  $a_{\alpha_0+k_j,\beta_0+k_p}\not\in K$ for all $p\in\IN-\{k_j\}$, will be called a {\it null } row.  More precisely,  a {\it right-null} (respectively {\it left-null}) row determined by $k_j$ is one that satisfies
 $a_{\alpha_0+k_j,\beta_0+k_j}\in K$, and $a_{\alpha_0+k_j,\beta_0+k_p}\not\in K$ for all $p> k_j$  (respectively $p<k_j$). The row determined by $\alpha_0$ is a null-row.

  \begin{diagram}\label{diagram:0116223}
  \begin{center}
\begin{tabular}{r ccccccccccccc}
& & & $k_\ell$ & $n$ & $k_{\ell+1}$ & & & $k_j$ &  & $k_{j+1}$ & & & \\ 
 & $\bullet$ & o & o & o & o & o & o & o & o & o & o & o & $\cdots$ \\   
& o & $\ddots$  & $\vdots$ & o & $\vdots$ & o & o & $\vdots$ & o & $\vdots$ & o &  o & $\cdots$\\ 
$k_\ell$ & o & $\cdots$ & $\bullet$ & $\cdots$ & $\vdots$ & $\cdots$ & $\cdots$ & $\vdots$ & $\cdots$ & $\vdots$ & $\cdots$ & $\cdots$ & $\cdots$ \\ 
$n$ & o & o  & $\vdots$ &  $\square$ & $\vdots$ & o & o & $\blacktriangle$ & o & $\vdots$ & o &  o& $\cdots$\\  
$k_{\ell+1}$ & o & $\cdots$ & $\vdots$ & $\cdots$ & $\bullet$ & $\cdots$ & $\cdots$ & $\vdots$ & $\cdots$ & $\vdots$ & $\cdots$ & $\cdots$ & $\cdots$ \\ 
& o & o  & $\vdots$ & o  & $\vdots$ & o & o & $\vdots$ & o & $\vdots$ & o &  o& $\cdots$\\ 
& o & & & & &  & $\ddots$  & & & & & & \\ 
$k_j$ & o & $\cdots$ & $\vdots$ & $\blacksquare$ & $\vdots$ & $\cdots$ & $\cdots$ & $\bullet$ & $\cdots$ & $\vdots$ & $\cdots$ & $\cdots$ & $\cdots$ \\ 
 & o & o  & $\vdots$ &$\circ$  & $\vdots$ & o & o & $\vdots$ & $\circ$ & $\vdots$ & o &  o& $\cdots$\\ 
$k_{j+1}$ & o & $\cdots$ & $\vdots$ & $\cdots$ & $\vdots$ & $\cdots$ & $\cdots$ & $\vdots$ & $\cdots$ & $\bullet$ & $\cdots$ & $\cdots$ & $\cdots$ \\ 
& o & o  & $\vdots$ & o  & $\vdots$ & o & o & $\vdots$ & o & $\vdots$ & o &  o& $\cdots$\\ 
& $\vdots$ & & & & & & & & & &  $\ddots$ &   &
\end{tabular}
\end{center}
\end{diagram}

    \begin{diagram}\label{diagram:0116224}
  \begin{center}
\begin{tabular}{r cccccccccccccc}
& & & $k_\ell$ &  & $k_{\ell+1}$ & & & $k_j$ &  && $k_{j+1}$ & & & \\ 
 & $\bullet$ & o & o & o & o & o & o & o & o & o & o & o & o & $\cdots$ \\   
& o & $\ddots$  & o& o  & o & o & o & o & o & o & o &  o& $\cdots$\\  
$k_\ell$ & o & o & $\bullet$ & o & $\blacksquare$ & o & o & $\blacksquare$ & o & o&$\blacksquare$  & o& o & $\cdots$ \\ 
& o & o  & o& o  & o & o & o & o & o & o & o &  o& $\cdots$\\   
$k_{\ell+1}$ & o & o & $\blacksquare$ & o & $\bullet$ & o & o & $\blacksquare$ & o & o &$\blacksquare$ & o & o & $\cdots$\\ 
& o & o  & o& o  & o & o & o & o & o & o & o & o &  o& $\cdots$\\ 
& o & & & & &  & $\ddots$  & & & & & & & \\ 
$k_j$ & o & o & $\blacksquare$ & o & $\blacksquare$ & o & o & $\bullet$ & o & o & $\blacksquare$ & o & o & $\cdots$\\ 
 & o & o  & o& o  & o & o & o & o & o & o & o & o &  o& $\cdots$\\ 
 & o & o  & o& o  & o & o & o & o & o & o & o & o &  o& $\cdots$\\ 
$k_{j+1}$ & o & o & $\blacksquare$ & o & $\blacksquare$ & o & o & $\blacksquare$ & o & o & $\bullet$ & o & o & $\cdots$\\
& o & o  & o& o  & o &o & o & o & o & o & o & o &  o& $\cdots$\\ 
& $\vdots$ & & & & & & & & & &  $\ddots$ &   &&
\end{tabular}
\end{center}
\end{diagram}  
 
  Similarly, a row determined by $k_j$ which contains an element of $K$ other than $a_{\alpha_0+k_j,\beta_0+k_j}$, that is, there exists $\ell>j$   (respectively $\ell<j$), such that $a_{\alpha_0+k_j,\beta_0+\ell}\in K$,  will be called a {\it right-ample} (respectively {\it left-ample}) row.  A row which is either left-ample or right-ample (or both), will be called simply {\it ample}.   By Diagram~\ref{diagram:1022212}, a left-ample row is also right-ample, but not conversely (see the sentence following Lemma~\ref{lem:0925211}). For the same reason, a right-null row is also left-null.
As noted above, if all rows of $K$ are null, then $K=\{a_{\alpha_0\beta_0}\}\cup\{a_{\alpha_0+k_j,\beta_0+k_j}:j\in\IN\}$. Thus we have the following lemma.

\begin{lemma}
If all the rows of $K$  are right-null, then $K=\{a_{\alpha_0\beta_0}\}\cup\{a_{\alpha_0+k_j,\beta_0+k_j}:j\in\IN\}.$
\end{lemma}
 
 \begin{lemma}
 All right-null rows lie above all ample rows. Hence, if there is at least one ample row, then there are only finitely many right-null rows.
 \end{lemma}
  \begin{proof}
  Suppose $k_j$ determines a right-null row, and $k_\ell$ determines an ample row, which we may assume to be  right-ample, say containing the element $a_{\alpha_0+k_\ell,\beta_0+k_m}$,  and suppose by way of contradiction that $\ell<j$. Since  $\ell<j<m$,  the diagram is the following, where $\blacksquare$  denotes the element $a_{\alpha_0+k_j,\beta_0+k_m}$.

\begin{center}
\begin{tabular}{lccccccccc}
&&$k_\ell$&&&$k_j$&&$k_m$&&\\
&$\cdots$&$\cdots$&$\cdots$&$\cdots$&$\cdots$&$\cdots$&$\cdots$&$\cdots$&$\cdots$\\
$k_\ell$&$\circ$&$\bullet$&$\circ$&$\circ$&$\circ$&$\circ$&$\blacksquare$&$\circ$&$\cdots$\\
&$\cdots$&$\cdots$&$\cdots$&$\cdots$&$\cdots$&$\cdots$&$\cdots$&$\cdots$&$\cdots$\\
$k_j$&$\circ$&$\cdots$&$\circ$&$\cdots$&$\bullet$&$\circ$&$\circ$&$\circ$&$\cdots$\\
&$\cdots$&$\cdots$&$\cdots$&$\cdots$&$\cdots$&$\cdots$&$\cdots$&$\cdots$&$\cdots$\\
$k_m$&$\cdots$&$\cdots$&$\circ$&$\cdots$&$\cdots$&$\circ$&$\bullet$&$\circ$&$\cdots$\\

\end{tabular}
\end{center}

 The two points $\blacksquare$ and $a_{\alpha_0+k_j,\beta_0+k_j}$ are vertices of a triangle with base  $b=k_m-k_j$ and height $h=k_j-k_\ell$.  Then by Diagrams~\ref{diagram:1027211}, \ref{diagram:1027213}, or \ref{diagram:1027215}, depending on the relative sizes of $b$ and $h$, the point $x_3=a_{\alpha_0+k_j,\beta_0+k_m+k_j-k_\ell}$ would belong to $K$, which is a contradiction since $k_m+k_j-k_\ell> k_j$.
  \end{proof}

  Let $\ell_0\in\IN$ be such that the first ample row is determined by $k_{\ell_0}$, and assume without loss of generality, that this row is right-ample.  Assume also, temporarily, that $\ell_0>1$. The rows lying above the row determined by $\alpha_0+k_{\ell_0}$ do not contain any elements of $K$ above the diagonal. It follows from Diagram~\ref{diagram:1022214} that the columns lying to the left of the column determined by $\beta_0+k_{\ell_0}$ do not contain any elements of $K$ below the diagonal.  By Diagram~\ref{diagram:1022212}, $K$ contains infinitely many elements to the right of $a_{\alpha_0+k_{\ell_0},\beta_0+k_{\ell_0}}$, and then by 
  Diagrams~\ref{diagram:1022212} and ~\ref{diagram:1022214}, $K$ contains infinitely many elements below $a_{\alpha_0+k_{\ell_0},\beta_0+k_{\ell_0}}$. Since $\overline{\gamma}=\infty$, the subsemiheap $K\cap K_{\alpha_0+k_{\ell_0},\beta_0+k_{\ell_0}}$ falls into subcase 3.3.3 below (see Propositions~\ref{prop:1213211} and ~\ref{prop:1211211}, and Diagram~\ref{diagram:0116225}), and therefore
  \begin{equation}\label{eq:0116221}
  K=\{a_{\alpha_0\beta_0}\}\cup\{a_{\alpha_0+k_i,\beta_0+k_i}:1\le i<\ell_0\}\cup K_{\alpha_0+k_{\ell_0},\beta_0+k_{\ell_0}}^{k_j-k_{\ell_0}},
  \end{equation}
 or
 \begin{equation}\label{eq:0116222}
  K=\{a_{\alpha_0\beta_0}\}\cup\{a_{\alpha_0+k_i,\beta_0+k_i}:1\le i<\ell_0\}\cup 
 \left(\bigcup_{i=\ell_0}^{j} K^{k_j-k_{\ell_0}}_{\alpha_0+k_i,\beta_0+k_i}\right)
    \end{equation}
  where $k_j$ is such that no point 
  $a_{\alpha_0+k_{\ell_0},\beta_0+k_p}$ belongs to $K$ for $1\le p<j$.   If $\ell_0=1$, then the term $\{a_{\alpha_0+k_i,\beta_0+k_i}:1\le i<\ell_0\}$ is missing in (\ref{eq:0116221}) and (\ref{eq:0116222}).  (In Diagram~\ref{diagram:0116225}, $\sigma=k_{\ell_0+1}+(k_j-k_{\ell_0})$). This completes the proof of Proposition~\ref{prop:1225212}. \end{proof}

   This completes the analysis of case 1.1.3 and hence of case 1.1.   
 
   \begin{lemma}\label{lem:1211211}
Cases 1.2 and 1.3 do not occur.
\end{lemma}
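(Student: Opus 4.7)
The plan is very short: both cases 1.2 and 1.3 assume that $\overline{\beta}=\beta_0$ together with the existence of some $\gamma\in\IZ$ with $\gamma>\alpha_0$ and $a_{\gamma,\beta_0}\in K$ (in case 1.2 the supremum $\overline\alpha$ is finite but strictly larger than $\alpha_0$, and in case 1.3 we may even take $\gamma$ arbitrarily large). I would derive a contradiction by producing an element of $K$ of the form $a_{\alpha_0,\beta}$ with $\beta>\beta_0$, which is forbidden by $\overline{\beta}=\beta_0$.

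To produce such an element, I would apply Lemma~\ref{lem:0925212} with $(\alpha,\beta)=(\alpha_0,\beta_0)$ and $(\gamma,\delta)=(\gamma,\beta_0)$. The hypothesis $\gamma\ge\alpha_0$ and $\delta=\beta_0\ge\beta_0$ is satisfied, and we are in the special case $\delta=\beta$, $\gamma-\alpha>0$ depicted in Diagram~\ref{diagram:1022214}. Reading off $x_1$ from the lemma gives
\[
x_1 = a_{\alpha_0+\delta-\beta_0,\,\beta_0+\gamma-\alpha_0} = a_{\alpha_0,\,\beta_0+(\gamma-\alpha_0)} \in K.
\]
Since $\gamma-\alpha_0>0$, this is an element $a_{\alpha_0,\beta}\in K$ with $\beta>\beta_0$, directly contradicting $\overline{\beta}=\beta_0$. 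The same single application of Lemma~\ref{lem:0925212} rules out both subcases simultaneously, because neither one makes any use of whether $\overline\alpha$ is finite or infinite; all that matters is the existence of at least one $\gamma>\alpha_0$ with $a_{\gamma,\beta_0}\in K$, which is precisely what $\overline\alpha>\alpha_0$ provides.

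There is no real obstacle here: the argument is just one triple product. One small thing to verify is that Lemma~\ref{lem:0925212} indeed applies when $\delta=\beta$ (i.e.\ that the case $\delta-\beta=0$ is not excluded), but this is clear from the statement since it only requires $\delta\ge\beta$. So the proof of Lemma~\ref{lem:1211211} will consist of essentially two lines: pick $\gamma>\alpha_0$ with $a_{\gamma,\beta_0}\in K$ using $\overline\alpha>\alpha_0$, then apply Lemma~\ref{lem:0925212} to obtain $a_{\alpha_0,\beta_0+(\gamma-\alpha_0)}\in K$, contradicting $\overline\beta=\beta_0$.
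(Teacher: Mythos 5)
Your proof is correct and is essentially the paper's own argument: the paper likewise takes $a_{\alpha_0\beta_0}\in K$ (guaranteed since $\overline{\beta}=\beta_0$) together with $a_{\alpha_0+r,\beta_0}\in K$ for some $r>0$ and applies the $\delta=\beta$ case of Lemma~\ref{lem:0925212} (Diagram~\ref{diagram:1022214}) to produce $x_1=a_{\alpha_0,\beta_0+r}\in K$, contradicting $\overline{\beta}=\beta_0$ in all six subcases at once. Your computation of $x_1$ and your observation that the finiteness of $\overline{\alpha}$ is irrelevant both match the paper exactly.
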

 
 \begin{proof}
 In each of the six subcases  of cases 1.2 and 1.3, the diagram is the following:

\begin{center}
\begin{tabular}{r c c c c c c c}
&$\beta_0$&&&&&&\\
$\alpha_0$&$\bullet$&$\circ$&$\circ$&$\circ$&$\circ$&$\circ$&$\cdots$\\
&$\circ$&$\ddots$&&&&&\\
&$\circ$&&$\ddots$&&&&\\
&$\circ$&&&$\ddots$&&&\\
$r$&$\bullet$&&&&$\ddots$&&\\
&$\vdots$&&&&&$\ddots$&\\
&$\vdots$&&&&&&$\ddots$\\
\end{tabular}
\end{center}
 
 Then applying Diagram~\ref{diagram:1022214} to the points $a_{\alpha_0\beta_0}$ and $a_{\alpha_0+r,\beta_0}$ shows that $x_1=a_{\alpha_0,\beta_0+r}\in K$, a contradiction.
 \end{proof} 
  
  \begin{diagram}\label{diagram:0116225}

 \begin{center}
 \begin{tabular}{rcccccccccccccccccc}
 &$\beta_0$&$\cdots$&$k_1$&$\cdots$&$k_2$&$\cdots$&$k_{\ell_0-1}$&$\cdots$&$k_{\ell_0}$&$\cdots$&$k_{\ell_0+1}$&$\cdots$&$k_j$&$\cdots$&$\sigma$&$\cdots$&$\cdots$\\
 
$\alpha_0$ &$\bullet$&$\circ$&$\circ$&$\circ$&$\circ$&$\circ$&$\circ$&$\circ$&$\circ$&$\circ$&$\circ$&$\circ$&$\circ$&$\circ$&$\circ$&$\circ$&$\cdots$&\\

&$\circ$&$\circ$&$\circ$&$\circ$&$\circ$&$\circ$&$\circ$&$\circ$&$\circ$&$\circ$&$\circ$&$\circ$&$\circ$&$\circ$&$\circ$&$\circ$&$\cdots$&\\

$k_1$&$\circ$&$\circ$&$\bullet$&$\circ$&$\circ$&$\circ$&$\circ$&$\circ$&$\circ$&$\circ$&$\circ$&$\circ$&$\circ$&$\circ$&$\circ$&$\circ$&$\cdots$&\\

&$\circ$&$\circ$&$\circ$&$\circ$&$\circ$&$\circ$&$\circ$&$\circ$&$\circ$&$\circ$&$\circ$&$\circ$&$\circ$&$\circ$&$\circ$&$\circ$&$\cdots$&\\

$k_2$&$\circ$&$\circ$&$\circ$&$\circ$&$\bullet$&$\circ$&$\circ$&$\circ$&$\circ$&$\circ$&$\circ$&$\circ$&$\circ$&$\circ$&$\circ$&$\circ$&$\cdots$&\\

$\vdots$&$\circ$&$\circ$&$\circ$&$\circ$&$\circ$&$\circ$&$\circ$&$\circ$&$\circ$&$\circ$&$\circ$&$\circ$&$\circ$&$\circ$&$\circ$&$\circ$&$\cdots$&\\

$k_{\ell_0-1}$&$\circ$&$\circ$&$\circ$&$\circ$&$\circ$&$\circ$&$\bullet$&$\circ$&$\circ$&$\circ$&$\circ$&$\circ$&$\circ$&$\circ$&$\circ$&$\cdots$&\\

$\vdots$&$\circ$&$\circ$&$\circ$&$\circ$&$\circ$&$\circ$&$\circ$&$\circ$&$\circ$&$\circ$&$\circ$&$\circ$&$\circ$&$\circ$&$\circ$&$\circ$&$\cdots$\\

$k_{\ell_0}$&$\circ$&$\circ$&$\circ$&$\circ$&$\circ$&$\circ$&$\circ$&$\circ$&$\bullet$&$\circ$&$\circ$&$\circ$&$\bullet$&$\circ$&$\circ$&$\cdots$&\\

$\vdots$&$\circ$&$\circ$&$\circ$&$\circ$&$\circ$&$\circ$&$\circ$&$\circ$&$\circ$&$\circ$&$\circ$&$\circ$&$\circ$&$\circ$&$\circ$&$\circ$&$\cdots$\\

$k_{\ell_0+1}$&$\circ$&$\circ$&$\circ$&$\circ$&$\circ$&$\circ$&$\circ$&$\circ$&$\circ$&$\circ$&$\bullet$&$\circ$&$\circ$&$\circ$&$\bullet$&$\circ$&$\cdots$\\

$\vdots$&$\circ$&$\circ$&$\circ$&$\circ$&$\circ$&$\circ$&$\circ$&$\circ$&$\circ$&$\circ$&$\circ$&$\circ$&$\circ$&$\circ$&$\circ$&$\circ$&$\cdots$\\

$k_j$&$\circ$&$\circ$&$\circ$&$\circ$&$\circ$&$\circ$&$\circ$&$\circ$&$\bullet$&$\circ$&$\circ$&$\circ$&$\bullet$&$\circ$&$\circ$&$\circ$&$\cdots$&\\

$\vdots$&$\circ$&$\circ$&$\circ$&$\circ$&$\circ$&$\circ$&$\circ$&$\circ$&$\circ$&$\circ$&$\circ$&$\circ$&$\circ$&$\circ$&$\circ$&$\circ$&$\cdots$&\\

$\sigma$&$\circ$&$\circ$&$\circ$&$\circ$&$\circ$&$\circ$&$\circ$&$\circ$&$\circ$&$\circ$&$\bullet$&$\circ$&$\circ$&$\circ$&$\bullet$&$\circ$&$\cdots$&\\

$\vdots$&$\circ$&$\circ$&$\circ$&$\circ$&$\circ$&$\circ$&$\circ$&$\circ$&$\circ$&$\circ$&$\circ$&$\circ$&$\circ$&$\circ$&$\circ$&$\circ$&$\cdots$&\\

$\vdots$&&&$\vdots$&&$\vdots$&&$\vdots$&&$\vdots$&&$\vdots$&&$\vdots$&&$\vdots$&&&\\


 \end{tabular}
 \end{center} 
  
 \end{diagram}

\begin{lemma}\label{lem:1211212}
Case 2.1 does not occur.
\end{lemma}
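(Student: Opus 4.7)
The plan is to derive a contradiction from the two defining conditions of Case~2.1, namely $\overline{\alpha}=\alpha_0$ and $\beta_0<\overline{\beta}<\infty$, by producing an element of $K$ in row $\alpha_0$ whose column index strictly exceeds $\overline{\beta}$.

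First I would observe that the hypothesis $\overline{\alpha}=\alpha_0$ forces $a_{\alpha_0,\beta_0}\in K$: by the definition of $\alpha_0$, every $a_{\alpha,\beta_0}\in K$ must have $\alpha\ge\alpha_0$, so the supremum of such $\alpha$ can equal $\alpha_0$ only if the defining set is nonempty and reduces to $\{\alpha_0\}$. Next, since $\overline{\beta}$ is a finite supremum over a subset of $\mathbb{Z}$, it is attained, so $a_{\alpha_0,\overline{\beta}}\in K$. Because $\overline{\beta}>\beta_0$, these are two distinct elements of $K$ sharing the row index $\alpha_0$.

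The decisive step is to apply Lemma~\ref{lem:0925212} (equivalently Diagram~\ref{diagram:1022212}, the case $\gamma=\alpha$ with $\delta-\beta>0$) to the pair $a_{\alpha_0,\beta_0}$ and $a_{\alpha_0,\overline{\beta}}$, taking $\alpha=\gamma=\alpha_0$, $\beta=\beta_0$, $\delta=\overline{\beta}$. The element
\[
x_4=a_{\gamma,\,\delta+(\delta-\beta)-(\gamma-\alpha)}=a_{\alpha_0,\,2\overline{\beta}-\beta_0}
\]
then belongs to $K$, and since $\overline{\beta}>\beta_0$ we have $2\overline{\beta}-\beta_0>\overline{\beta}$, contradicting the definition of $\overline{\beta}$ as $\sup\{\beta:a_{\alpha_0\beta}\in K\}$. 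Because this argument does not involve the value of $\overline{\gamma}$, it eliminates all three subcases 2.1.1, 2.1.2, 2.1.3 at once.

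I do not anticipate any serious obstacle: the argument is entirely dual to that of Lemma~\ref{lem:1211211}, with the roles of rows and columns interchanged and Diagram~\ref{diagram:1022212} used in place of Diagram~\ref{diagram:1022214}. The only minor point to check is that a finite supremum of a subset of $\mathbb{Z}$ is attained, which is automatic.
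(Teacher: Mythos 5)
Your proof is correct and follows essentially the same route as the paper: both arguments take the pair $a_{\alpha_0\beta_0}$ (which lies in $K$ because $\overline{\alpha}=\alpha_0$) and $a_{\alpha_0,\overline{\beta}}$ and apply Lemma~\ref{lem:0925212} (Diagram~\ref{diagram:1022212}) to that pair. The only difference is cosmetic: the paper extracts $x_1=a_{\alpha_0+(\overline{\beta}-\beta_0),\beta_0}$ and contradicts $\overline{\alpha}=\alpha_0$, whereas you extract $x_4=a_{\alpha_0,2\overline{\beta}-\beta_0}$ and contradict the finiteness of $\overline{\beta}$.
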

 
 \begin{proof}
 In each of the three subcases  of case 2.1, the diagram is the following:

\begin{center}
\begin{tabular}{r c c c c c c c}
&$\beta_0$&&&&$p$&&\\
$\alpha_0$&$\bullet$&$\circ$&$\circ$&$\circ$&$\bullet$&$\circ$&$\cdots$\\
&$\circ$&$\ddots$&&&&&\\
&$\circ$&&$\ddots$&&&&\\
&$\circ$&&&$\ddots$&&&\\
&$\circ$&&&&$\ddots$&&\\
&$\vdots$&&&&&$\ddots$&\\
&$\vdots$&&&&&&$\ddots$\\
\end{tabular}
\end{center}
 
 Then applying Diagram~\ref{diagram:1022212} to the points $a_{\alpha_0\beta_0}$ and $a_{\alpha_0,\beta_0+p}$ shows that $x_1=a_{\alpha_0+p,\beta_0}\in K$, a contradiction.
 \end{proof}

  \begin{lemma}\label{lem:1211213}
Case 2.2 does not occur.
\end{lemma}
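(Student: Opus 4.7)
My plan is to apply Lemma~\ref{lem:0929211} directly to the pair of points $a_{\alpha_0,\overline{\beta}}$ and $a_{\overline{\alpha},\beta_0}$, both of which must lie in $K$ under the hypotheses of Case 2.2. The first is in $K$ because $\beta_0 < \overline{\beta} < \infty$ forces the supremum defining $\overline{\beta}$ to be attained (the defining set lives in $\IZ$), and the second for the analogous reason with $\overline{\alpha}$. As noted in the paragraph preceding Lemma~\ref{lem:1225211}, these are in fact the only points of $K$ in row $\alpha_0$ and column $\beta_0$ respectively, apart from a possible $a_{\alpha_0,\beta_0}$.

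Setting $(\alpha,\beta) := (\alpha_0,\overline{\beta})$ and $(\gamma,\delta) := (\overline{\alpha},\beta_0)$, the strict inequalities $\gamma > \alpha$ and $\delta < \beta$ hold, so the hypotheses of Lemma~\ref{lem:0929211} are satisfied. Reading off that lemma, the element
\[
x_1 = a_{\alpha_0,\,\overline{\beta}+(\overline{\alpha}-\alpha_0)+(\overline{\beta}-\beta_0)}
\]
must lie in $K$. Since both increments added to $\overline{\beta}$ are strictly positive, $x_1$ is an element of $K$ in row $\alpha_0$ whose column index exceeds $\overline{\beta}$, contradicting the defining supremum property of $\overline{\beta}$.

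There is essentially no obstacle to navigate: because $x_1$ arises from the triple product $a_{\alpha\beta}a_{\gamma\delta}^*a_{\alpha\beta}$ using only the two named points, the argument does not depend on whether $a_{\alpha_0,\beta_0}$ itself belongs to $K$. Symmetrically, one could invoke $x_4 = a_{\overline{\alpha}+(\overline{\beta}-\beta_0)+(\overline{\alpha}-\alpha_0),\,\beta_0}\in K$ to contradict the maximality of $\overline{\alpha}$. The proof is thus a one-step application of Lemma~\ref{lem:0929211}, in the same spirit as the proofs of Lemmas~\ref{lem:1211211} and~\ref{lem:1211212}.
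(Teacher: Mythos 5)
Your proof is correct and follows essentially the same route as the paper: the paper likewise takes the unique points $a_{\alpha_0,\beta_0+p}$ and $a_{\alpha_0+r,\beta_0}$ (i.e.\ $a_{\alpha_0,\overline{\beta}}$ and $a_{\overline{\alpha},\beta_0}$) and applies Lemma~\ref{lem:0929211}, via Diagrams~\ref{diagram:1027211} and~\ref{diagram:1027213}, to produce an element $a_{\alpha_0,\beta_0+\ell}$ with $\ell>p$, contradicting the finiteness of $\overline{\beta}$. Your version is marginally cleaner in that invoking the lemma directly (rather than the diagrams) avoids the case split on the relative sizes of $r$ and $p$.
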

\begin{proof}
 Note first that by Diagrams~\ref{diagram:1022212} and ~\ref{diagram:1022214}, only one element of $K$ can reside on the row determined  by $\alpha_0$ or on the column determined by $\beta_0$. Thus, the diagram for this case (if $r<p$) is the following.
 
 \begin{center}
\begin{tabular}{r c c c c c c c}
&$\beta_0$&&$r$&&$p$&&\\
$\alpha_0$&$\circ$&$\circ$&$\circ$&$\circ$&$\bullet$&$\circ$&$\cdots$\\
&$\circ$&$\ddots$&&&&&\\
$r$&$\bullet$&&$\ddots$&&&&\\
&$\circ$&&&$\ddots$&&&\\
$p$&$\circ$&&&&$\ddots$&&\\
&$\vdots$&&&&&$\ddots$&\\
&$\vdots$&&&&&&$\ddots$\\
\end{tabular}
\end{center}

Then by Diagrams~~\ref{diagram:1027211} and ~\ref{diagram:1027213} (depending on whether $r<p$ or $p\ge r$, $K$ would contain elements  $a_{\alpha_0,\beta_0+\ell}$ with $\ell>p$, a contradiction.
\end{proof}

  \begin{lemma}\label{lem:1211214}
Case 2.3 does not occur, hence case 2 does not occur.
\end{lemma}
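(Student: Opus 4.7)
The plan is to reach a direct contradiction from the defining hypotheses of Case 2.3, namely $\beta_0<\overline{\beta}<\infty$ and $\overline{\alpha}=\infty$, by exhibiting a new element of $K$ that lives on row $\alpha_0$ strictly to the right of column $\overline{\beta}$.

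First, I would invoke the observation already recorded after the statement of the nine subcases: because $\overline{\beta}$ is finite with $\overline{\beta}>\beta_0$, Diagram~\ref{diagram:1022212} forces $a_{\alpha_0,\overline{\beta}}$ to be the \emph{only} element of $K$ of the form $a_{\alpha_0,\beta}$. (Indeed, if any other $a_{\alpha_0,\beta'}\in K$ with $\beta'<\overline{\beta}$ existed, then the $x_4$ produced by Diagram~\ref{diagram:1022212} applied to $a_{\alpha_0,\beta'}$ and $a_{\alpha_0,\overline{\beta}}$ would sit on row $\alpha_0$ at column $\overline{\beta}+(\overline{\beta}-\beta')>\overline{\beta}$, contradicting the definition of $\overline{\beta}$.) This pins down the entire row $\alpha_0$.

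Next, using $\overline{\alpha}=\infty$, I would pick an element $a_{\alpha,\beta_0}\in K$ with $\alpha>\alpha_0$ (such an element exists since the supremum defining $\overline{\alpha}$ is infinite). The two points $a_{\alpha_0,\overline{\beta}}$ and $a_{\alpha,\beta_0}$ satisfy $\alpha\ge\alpha_0$ and $\beta_0\le\overline{\beta}$, so Lemma~\ref{lem:0929211} (positive-slope configuration) applies. Its conclusion yields
\[
x_1=a_{\alpha_0,\overline{\beta}+(\alpha-\alpha_0)+(\overline{\beta}-\beta_0)}\in K.
\]
Since $\alpha-\alpha_0>0$ and $\overline{\beta}-\beta_0>0$, the second index of $x_1$ is strictly greater than $\overline{\beta}$, while its first index is $\alpha_0$. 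This contradicts the conclusion of the first paragraph, and the contradiction shows that Case 2.3 is empty.

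Finally, since Cases 2.1, 2.2, and 2.3 have now all been ruled out (by Lemmas~\ref{lem:1211212},~\ref{lem:1211213}, and the argument just given), Case 2 itself does not occur. There is no real obstacle in this proof; the only subtle point is the preliminary reduction showing that $a_{\alpha_0,\overline{\beta}}$ is the unique point of $K$ on row $\alpha_0$, which is what makes the production of $x_1$ immediately fatal.
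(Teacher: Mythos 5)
Your proof is correct and follows essentially the same route as the paper: both combine the unique point $a_{\alpha_0,\overline{\beta}}$ on row $\alpha_0$ with a point $a_{\alpha_0+r,\beta_0}$ on column $\beta_0$ (which exists since $\overline{\alpha}=\infty$) and use the positive-slope configuration to produce an element of $K$ on row $\alpha_0$ strictly to the right of $\overline{\beta}$, contradicting the definition of $\overline{\beta}$. The only difference is presentational: you invoke Lemma~\ref{lem:0929211} directly and read off $x_1$, which avoids the paper's three-way split (via Diagrams~\ref{diagram:1027211}, \ref{diagram:1027213}, \ref{diagram:1027215}) according to the relative sizes of $p=\overline{\beta}-\beta_0$ and $r$.
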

\begin{proof}
In Case 2.3.1 ($ \beta_0<\overline{\beta}<\infty, \overline{\alpha}=\infty,\overline{\gamma}=0$),
 note that $a_{\alpha_0,\beta_0+p}$ is the only point of $K$ on the row determined by $\alpha_0$.  From the following diagram we see that if $p<r$, we get a contradiction using Diagram~\ref{diagram:1027213}, and if $r<p$ we get a contradiction using Diagram~\ref{diagram:1027211}, whereas if $p=r$, we get a contradiction using Diagram~\ref{diagram:1027215}.

 \begin{center}
\begin{tabular}{r c c c c c c c c c}
&$\beta_0$&&&&$p$&&&&\\
$\alpha_0$&$\circ$&$\circ$&$\circ$&$\circ$&$\bullet$&$\circ$&$\circ$&$\cdots$&\\
&$\circ$&$\circ$&&&&&&&\\
&$\circ$&&$\circ$&&&&&&\\
&$\circ$&&&$\circ$&&&&&\\
&$\circ$&&&&$\circ$&&&&\\
&$\circ$&&&&&$\circ$&&&\\
$r$&$\bullet$&&&&&&$\circ$&&\\
&$\vdots$&&&&&&&$\ddots$&\\
\end{tabular}
\end{center}

The same proof applies to cases  2.3.2 ($\beta_0<\overline{\beta}<\infty, \overline{\alpha}=\infty,0<\overline{\gamma}<\infty$)  and  2.3.3 ($ \beta_0<\overline{\beta}<\infty, \overline{\alpha}=\infty, \overline{\gamma}=\infty$).\end{proof}

\begin{lemma}\label{lem:1211215}
 Cases 3.1 and  3.2 do not occur.
 \end{lemma}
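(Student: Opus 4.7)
The plan is to give two short contradiction arguments, one for each of the two cases 3.1 and 3.2; in each, the hypothesis $\overline{\beta}=\infty$ provides a second point far to the right of $a_{\alpha_0,\beta_0}$, and applying an appropriate triple product produces a new point in the column $\beta_0$ that is inconsistent with the value of $\overline{\alpha}$. The subcase index $\overline{\gamma}$ plays no role, so all six subcases (3.1.1--3.1.3 and 3.2.1--3.2.3) are handled simultaneously.

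For case 3.1, first observe that $\overline{\alpha}=\alpha_0$ forces $a_{\alpha_0,\beta_0}\in K$ (the set $\{\alpha:a_{\alpha,\beta_0}\in K\}$ is nonempty, bounded below by $\alpha_0$, and has supremum $\alpha_0$, hence equals $\{\alpha_0\}$, as already noted by the author). Since $\overline{\beta}=\infty$, choose some $\beta>\beta_0$ with $a_{\alpha_0,\beta}\in K$. Now apply Lemma~\ref{lem:0925212} with $\alpha=\gamma=\alpha_0$, so that Diagram~\ref{diagram:1022212} is in force: the element $x_1=a_{\alpha_0+\beta-\beta_0,\beta_0}$ must lie in $K$. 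Since $\beta-\beta_0>0$, this contradicts $\overline{\alpha}=\alpha_0$.

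For case 3.2, note that $\alpha_0<\overline{\alpha}<\infty$ means the bounded set $\{\alpha\in\IZ:a_{\alpha,\beta_0}\in K\}$ has a maximum, so $a_{\overline{\alpha},\beta_0}\in K$. Using $\overline{\beta}=\infty$, pick $\beta>\beta_0$ with $a_{\alpha_0,\beta}\in K$. This puts us in the hypothesis of Lemma~\ref{lem:0929211} with $\alpha=\alpha_0$, $\gamma=\overline{\alpha}$, $\delta=\beta_0$ (so $\gamma\ge\alpha$ and $\delta\le\beta$), from which
\[
x_4=a_{\overline{\alpha}+(\beta-\beta_0)+(\overline{\alpha}-\alpha_0),\,\beta_0}\in K.
\]
Its first coordinate strictly exceeds $\overline{\alpha}$, again contradicting the definition of $\overline{\alpha}$.

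The only thing to be careful about is checking that the element in $K$ we produce does have its second coordinate exactly equal to $\beta_0$, so that the contradiction is with $\overline{\alpha}$ rather than merely with some weaker statement; this is immediate from the formulas for $x_1$ and $x_4$. There is no serious obstacle here — the lemma is a sanity check on the classification scheme, ruling out any row-$\alpha_0$ behavior of type $\overline{\beta}=\infty$ unless the column-$\beta_0$ behavior is also $\overline{\alpha}=\infty$, which is case 3.3 and is handled in later results.
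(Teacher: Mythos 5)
Your proof is correct and follows essentially the same route as the paper: the paper disposes of case 3.1 by citing Diagram~\ref{diagram:1022212} (your $x_1$ computation) and of case 3.2 by citing Diagram~\ref{diagram:1027211} (your $x_4$ from Lemma~\ref{lem:0929211}), and your verifications that $a_{\alpha_0,\beta_0}\in K$ when $\overline{\alpha}=\alpha_0$ and that $a_{\overline{\alpha},\beta_0}\in K$ when $\overline{\alpha}$ is finite are exactly the observations the paper records just before the case analysis. If anything, invoking Lemma~\ref{lem:0929211} directly for case 3.2 is slightly cleaner than the paper's citation of Diagram~\ref{diagram:1027211} alone, since it covers all relative sizes of $\beta-\beta_0$ and $\overline{\alpha}-\alpha_0$ at once.
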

 \begin{proof}
Cases 3.1.1, 3.1.2, and 3.1.3 do not occur by Diagram~\ref{diagram:1022212}. 
 Cases 3.2.1, 3.2.2, and 3.2.3 do not occur by Diagram~\ref{diagram:1027211}. 
\end{proof}

\begin{proposition}\label{prop:1217211}
Cases 3.3.1  and 3.3.2  do not occur.
\end{proposition}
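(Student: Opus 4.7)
The plan is to derive a contradiction in each of Case 3.3.1 and Case 3.3.2 by producing diagonal elements $a_{\alpha_0+n,\beta_0+n}\in K$ for arbitrarily large $n$, which forces $\overline{\gamma}=\infty$ and contradicts the finiteness hypothesis. Following the dichotomy flagged in the discussion preceding the case analysis, I would split according to whether $a_{\alpha_0,\beta_0}\in K$ or not.

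\textbf{Sub-case A:} $a_{\alpha_0,\beta_0}\in K$. Because $\overline{\beta}=\infty$, an element $a_{\alpha_0,\beta_0+n}\in K$ exists for arbitrarily large $n\ge 1$. I would then invoke Lemma~\ref{lem:0925212}, specifically its special case Diagram~\ref{diagram:1022212}, applied to the pair $a_{\alpha_0,\beta_0}$ and $a_{\alpha_0,\beta_0+n}$ with the assignment $\alpha=\gamma=\alpha_0$, $\beta=\beta_0$, $\delta=\beta_0+n$. The element $x_2=a_{\alpha+\delta-\beta,\delta}$ then evaluates to $a_{\alpha_0+n,\beta_0+n}\in K$, a diagonal element at height $n$. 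Since $n$ may be made arbitrarily large, this forces $\overline{\gamma}=\infty$, a contradiction to the finiteness of $\overline{\gamma}$ in both 3.3.1 and 3.3.2.

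\textbf{Sub-case B:} $a_{\alpha_0,\beta_0}\notin K$. In Case 3.3.1 ($\overline{\gamma}=0$), the single diagonal element guaranteed by $\overline{\gamma}\ge 0$ must be $a_{\alpha_0,\beta_0}$, so this sub-case is vacuous for 3.3.1. In Case 3.3.2 ($0<\overline{\gamma}<\infty$) there exists an integer $k$ with $1\le k\le\overline{\gamma}$ and $a_{\alpha_0+k,\beta_0+k}\in K$. Fix a row element $a_{\alpha_0,\beta_0+n}\in K$ with $n>\overline{\gamma}$, available since $\overline{\beta}=\infty$. I would apply Lemma~\ref{lem:0929211} with $\alpha=\alpha_0$, $\beta=\beta_0+n$, $\gamma=\alpha_0+k$, $\delta=\beta_0+k$; the hypotheses $\gamma\ge\alpha$ and $\delta\le\beta$ hold (the latter because $k\le n$). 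The element $x_2=a_{\gamma+\beta-\delta,\beta}$ then works out to $a_{\alpha_0+n,\beta_0+n}\in K$, a diagonal element at height $n$, and letting $n\to\infty$ again forces $\overline{\gamma}=\infty$, the required contradiction.

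The main obstacle is merely to select the right diagram/lemma and to assign its variables so that one of the output elements $x_i$ lands precisely on the main diagonal; once the seed diagonal element has been identified (the corner $a_{\alpha_0,\beta_0}$ in Sub-case A, a positive-height diagonal point in Sub-case B), the arithmetic is immediate and a single application suffices.
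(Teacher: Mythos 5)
Your proof is correct, but it takes a genuinely different route from the paper's for the harder half of the argument. The first step coincides: when $a_{\alpha_0\beta_0}\in K$, both you and the paper feed $a_{\alpha_0\beta_0}$ and a far-out row element $a_{\alpha_0,\beta_0+n}$ into Diagram~\ref{diagram:1022212} (or its column analogue, Diagram~\ref{diagram:1022214}) to produce the diagonal point $a_{\alpha_0+n,\beta_0+n}$ and contradict the finiteness of $\overline{\gamma}$. Where you diverge is the case $a_{\alpha_0\beta_0}\notin K$: the paper reduces it to case 3.3.3 by passing to the subsemiheaps $K\cap K_{\alpha_0,\beta_0+k_1}$ and $K\cap K_{\alpha_0+\ell_1,\beta_0}$ and then leans on the long four-way analysis of Lemma~\ref{lem:1230211} (a forward reference) to force $a_{\alpha_0\beta_0}\in K$ anyway, whereas you observe that in case 3.3.2 there is already a diagonal point $a_{\alpha_0+k,\beta_0+k}$ with $k\ge 1$, pair it with some $a_{\alpha_0,\beta_0+n}\in K$ with $n>\overline{\gamma}$, and read off $x_2=a_{\gamma+\beta-\delta,\beta}=a_{\alpha_0+n,\beta_0+n}$ from Lemma~\ref{lem:0929211}; I checked the variable assignment and the arithmetic, and it is right. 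Your argument is shorter, self-contained, uses only Lemmas~\ref{lem:0925212} and~\ref{lem:0929211}, and exploits the hypothesis $\overline{\gamma}<\infty$ directly, while the paper's version is uniform with its treatment of case 3.3.3 at the cost of importing much heavier machinery. The one point you should make explicit is the dismissal of case 3.3.1 with $a_{\alpha_0\beta_0}\notin K$ as vacuous: this is valid under the reading that $\overline{\gamma}=0$ means the set $\{k\in\IN_0:a_{\alpha_0+k,\beta_0+k}\in K\}$ is nonempty with supremum $0$, hence equal to $\{0\}$ --- a reading consistent with the paper's use of $\overline{\gamma}$ elsewhere (e.g.\ Lemma~\ref{lem:1225211}) --- but the paper's own proof does not rely on it and runs the $a_{\alpha_0\beta_0}\notin K$ argument for 3.3.1 as well, so a sentence justifying the vacuity (or simply applying your 3.3.2 argument whenever some diagonal point with $k\ge 1$ exists and your sub-case A otherwise) would make the proof airtight.
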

\begin{proof}
By Diagram~\ref{diagram:1022212} or ~\ref{diagram:1022214}, we may assume that $a_{\alpha_0,\beta_0}\not\in K$.  The subsets
$K\cap K_{\alpha_0,\beta_0+k_1}$  and $K\cap K_{\alpha_0+\ell_1,\beta_0}$ are subsemiheaps of $K$ which fall into case 3.3.3, which is described below in Proposition~
~\ref{prop:0102221}. However, as shown in the proof of Lemma~\ref{lem:1230211} below, the four possible situations each lead to $a_{\alpha_0,\beta_0}\in K$.
\end{proof}

  \smallskip

\noindent{\bf Case 3.3.3} ($\overline{\beta}=\infty,  \overline{\alpha}=\infty,\overline{\gamma}=\infty$) 

\begin{quotation} 
Let $a_{\alpha_0,\beta_0+p}\in K$ with $p\ge 1$ and $a_{\alpha_0,\beta_0+p'}\not\in K$ for $1\le p'< p$.  Similarly,  let 
$a_{\alpha_0+r,\beta_0}\in K$ with $r\ge 1$ and $a_{\alpha_0+r',\beta_0}\not\in K$ for $1\le r'< r$ and  let 
$a_{\alpha_0+q,\beta_0+q}\in K$ with $q\ge 1$ and $a_{\alpha_0+q',\beta_0+q'}\not\in K$ for $1\le q'< q$

In the diagram below, the bullets represent the three points of $K$ which were just defined, the symbol $\circledcirc$ means that  the element $a_{\alpha_0\beta_0}$ may or may not belong to $K$, and the circles indicate that no element of $K$ occupies that position. The diagram represents just one of 13 possible cases (namely case (5) below), and is for illustration purposes only.

\bigskip

\begin{center}
\begin{tabular}{l c c c c c c c c c c c }
& $\beta_0$ & & & $q$ & &  & & $r$  & $p$  && \\
$\alpha_0$ &  $\circledcirc$ &  $\circ$&  $\circ$&  $\circ$&  $\circ$&  $\circ$&  $\circ$&  $\circ$
& $\bullet$ &$\cdots$  &  \\
& $\circ$ & $\circ$ & & & &   & &  &   && \\
&  $\circ$  && $\circ$ & &   & &   &   &   && \\
$q$ &  $\circ$  &&& $\bullet$ &    & &   &   &   && \\

&  $\circ$  &&&&$\ddots$&  &   &    &   && \\

 &  $\circ$  &&&&& $\ddots$ &      &   &   && \\

&  $\circ$  &&&&&& $\ddots$ &    &   && \\


$r$ &  $\bullet$  &&&&&&& $\ddots$ &   &    & \\

 $p$ &$\vdots$    &&&&&&&& $\ddots$       && \\

& $\vdots$  &&&&&&&&       & $\ddots$ & \\

\end{tabular}
\end{center}
\medskip

 Of course, we must consider the various relations between the three elements $p,q,r\in\IN$, some of which can be equal, of which there are six, namely
 \begin{itemize}
 \item $r\le p\le q$
 \item $p\le r\le q$
 \item $p\le q\le r$
 \item $r\le q\le p$
 \item $q\le r\le p$
 \item $q\le p\le r$
 \end{itemize}

  But for our purposes, it is necessary to distinguish 13 more refined cases, namely
 
 \begin{enumerate}
\item $r<p<q$
\item $p<r<q$

\item $p<q<r$

\item $r<q<p$

\item $q<r<p$

\item $q<p<r$

\item $r=p<q$
\item $p=q<r$
\item $r=q<p$
\item $r<p=q$
\item $p<r=q$
\item $q<r=p$
\item $r=p=q$
 \end{enumerate}
 
 \end{quotation}
 
 \begin{lemma}\label{lem:1112211}
Cases (3) to (11) do not occur.  If $a_{\alpha_0,\beta_0}\in K$, then cases (1) and (2) do not occur.
If $a_{\alpha_0\beta_0}\not\in K$, then case (12) does not occur. In case (13), $a_{\alpha_0\beta_0}\in K$
 \end{lemma}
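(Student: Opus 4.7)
The strategy is to produce a single triple product involving the three distinguished elements
\[
A_1=a_{\alpha_0,\beta_0+p},\qquad A_3=a_{\alpha_0+q,\beta_0+q},\qquad A_2=a_{\alpha_0+r,\beta_0},
\]
that forces a contradiction with the minimality of $p$, $q$, or $r$ in most of the thirteen cases. Specifically, I would compute the product $A_1A_3^*A_2$ using Lemma~\ref{lem:0925211}, unpacking which of the four branches (i)--(iv) applies as a function of the comparisons among $p,q,r$.

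Doing the arithmetic, $A_1A_3^*A_2$ equals $a_{\alpha_0,\beta_0+p-r}$ in branches (i)/(ii) (which apply when $r\le p$ and $q\le p$), equals $a_{\alpha_0+r-p,\beta_0}$ in branches (ii)/(iii) (which apply when $r\ge p$ together with either $q\le p$ or $q\ge p$), and equals $a_{\alpha_0+q-p,\beta_0+q-r}$ in branch (iv) (which applies when $r\le q$ and $q\ge p$). I would then run through the thirteen cases. In cases (3), (6), (8), (11) the output is $a_{\alpha_0+r-p,\beta_0}$ with $1\le r-p<r$, contradicting the minimality of $r$. In cases (4), (5), (9), (10) the output is $a_{\alpha_0,\beta_0+p-r}$ with $1\le p-r<p$, contradicting the minimality of $p$. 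In case (7), where $r=p<q$, branch (iv) produces the diagonal element $a_{\alpha_0+q-p,\beta_0+q-p}$ with $1\le q-p<q$, contradicting the minimality of $q$. In cases (12) and (13), the equality $r=p$ forces the output to collapse to $a_{\alpha_0,\beta_0}$: in case (12) this contradicts the hypothesis $a_{\alpha_0,\beta_0}\notin K$, while in case (13) it is exactly the asserted conclusion.

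The remaining cases (1) and (2) fall into branch (iv) and produce only the interior element $a_{\alpha_0+q-p,\beta_0+q-r}$, which violates no minimality condition. For these I would instead exploit the extra hypothesis $a_{\alpha_0,\beta_0}\in K$ directly: applying Diagram~\ref{diagram:1022212} to the pair $a_{\alpha_0,\beta_0}$ and $a_{\alpha_0,\beta_0+p}$ (lying on the row $\alpha_0$) yields the diagonal element $a_{\alpha_0+p,\beta_0+p}\in K$, contradicting the minimality of $q$ since $p<q$ in both of these cases.

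The main difficulty is purely bookkeeping: correctly identifying which branch of Lemma~\ref{lem:0925211} applies at each boundary equality ($r=p$, $p=q$, $r=q$, or all three), and in each instance verifying that the shifted indices $p-r$, $r-p$, or $q-p$ are strictly positive and strictly below the corresponding minimum. No new conceptual input beyond Lemma~\ref{lem:0925211} and Diagram~\ref{diagram:1022212} is needed.
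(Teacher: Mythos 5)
Your proposal is correct and follows essentially the same route as the paper: the paper's proof is built on exactly the triple product $a_{\alpha_0,\beta_0+p}a^*_{\alpha_0+q,\beta_0+q}a_{\alpha_0+r,\beta_0}$ evaluated through the four branches of Lemma~\ref{lem:0925211}, with the same case-by-case minimality contradictions, and it likewise falls back on a two-point diagram argument for cases (1) and (2) when $a_{\alpha_0\beta_0}\in K$. The only (immaterial) difference is that in those two cases the paper pairs $a_{\alpha_0\beta_0}$ with $a_{\alpha_0+r,\beta_0}$ (Diagram~\ref{diagram:1022214}) or $a_{\alpha_0,\beta_0+p}$ (Diagram~\ref{diagram:1022212}) to contradict the minimality of $p$ or $r$, whereas you contradict the minimality of $q$ via the diagonal point $x_2=a_{\alpha_0+p,\beta_0+p}$.
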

 \begin{proof}By Lemma~\ref{lem:0925211}, we have 
 \begin{equation}\label{eq:1112211}
 a_{\alpha_0,\beta_0+p}a^*_{\alpha_0+q,\beta_0+q}a_{\alpha_0+r,\beta_0}=
  \left\{ \begin{array}{ll}
 \hbox{(i) } a_{\alpha_0,\beta_0+p-r} & \hbox{ if }r\le p\hbox{ and }q\le p\\
  \hbox{(ii) } a_{\alpha_0+r-p,\beta_0} & \hbox{ if }r\ge p\hbox{ and }q\le p\\
  \hbox{(iii) } a_{\alpha_0+r-p,\beta_0} & \hbox{ if }r\ge q\hbox{ and }q\ge p\\ 
 \hbox{(iv) } a_{\alpha_0+q-p,\beta_0+q-r} & \hbox{ if }r\le q\hbox{ and }q\ge p.
   \end{array}
   \right.
 \end{equation}
 
 \noindent In case (1) with $a_{\alpha_0\beta_0}\in K$, we obtain a contradiction by Diagram~\ref{diagram:1022214}. 
 
 \noindent In case (2) with $a_{\alpha_0\beta_0}\in K$, we obtain a contradiction by Diagram~\ref{diagram:1022212}.
  
\noindent In case (3), we obtain a contradiction by (\ref{eq:1112211}(iii)).
 
\noindent In case (4), we obtain a contradiction by (\ref{eq:1112211}(i)).
 
 \noindent In case (5), we obtain a contradiction by (\ref{eq:1112211}(i)).
 
\noindent  In case (6), we obtain a contradiction by (\ref{eq:1112211}(ii)).
 
\noindent  In case (7), we obtain a contradiction by (\ref{eq:1112211}(iv)).
 
 \noindent In case (8), we obtain a contradiction by (\ref{eq:1112211}(ii)).
 
\noindent In case (9), we obtain a contradiction by (\ref{eq:1112211}(i)).
 
 \noindent In case (10), we obtain a contradiction by (\ref{eq:1112211}(i)).
 
\noindent In case (11), we obtain a contradiction by (\ref{eq:1112211}(iii)).

\noindent In case (12) with  $a_{\alpha_0\beta_0}\not\in K$, 
we obtain a contradiction by (\ref{eq:1112211}(i)) or (ii).
 
 \noindent In case (13), $a_{\alpha_0\beta_0}\in K$ by (\ref{eq:1112211}(iii)).
 \end{proof}

It remains to consider cases (1) and (2), with $a_{\alpha_0\beta_0}\not \in K$, and the cases (12)  and (13), with  $a_{\alpha_0\beta_0}\in K$. The latter two are  resolved in Propositions~\ref{prop:1213211} and \ref{prop:1211211} and the former two in Lemma~\ref{lem:1230211}.

We start with some properties in case (12).  The basic diagram for case (12) is the following.
\medskip

\begin{center}
\begin{tabular}{r r c c c c c c c c c c c }
&&0&1&2&3&4&5&6&7&\\
&& $\beta_0$  & & &$q$  &  & && $p$     && \\
0&$\alpha_0$ &  $\bullet$ &  $\circ$&  $\circ$&  $\circ$&  $\circ$&  $\circ$&  $\circ$
& $\bullet$  &$\cdots$ && \\
1&& $\circ$ & $\circ$ & & & &   & &  &   && \\
2&&  $\circ$  && $\circ$ & &   & &   &   &   && \\
3&$q$ &  $\circ$  &&& $\bullet$ &    & &   &   &   && \\

4&&  $\circ$  &&&& $\ddots$ & &   &    &   && \\

5& &  $\circ$  &&&&& $\ddots$ &      &   &   && \\

6&&  $\circ$  &&&&&& $\ddots$ &    &   && \\

7&$r=p$ &  $\bullet$ &&&&&&& $\ddots$ &   &    & \\

 &&  $\vdots$  &&&&&&&&     &  &   \\

\end{tabular}
\end{center}

\begin{lemma}\label{lem:1201211}
In case (12), with (necessarily) $a_{\alpha_0\beta_0}\in K$, 
\begin{description}
\item[(a)]
The rows $1, 2,\ldots, q-1$ contain no elements of $K$ above the diagonal

The columns $1, 2,\ldots, q-1$ contain no elements of $K$ below the diagonal
\smallskip

\item[(b)]
The points $a_{\alpha_0+q,\beta_0+q+i}$, for $1\le i\le p-q$ do not belong to $K$.

The points $a_{\alpha_0+q+i,\beta_0+q}$, for $1\le i\le p-q$ do not belong to $K$.
\smallskip

\item[(c)]
The points $a_{\alpha_0+q,\beta_0+p}$, and   $a_{\alpha_0+p,\beta_0+q}$
do not belong to $K$.
\smallskip

\item[(d)]

 The points $a_{\alpha_0+q,\beta_0+j}$, for $mp<j<mp+q$, with $m\in\IN$
do not belong to $K$.

The points $a_{\alpha_0+i,\beta_0+q}$, for $mp<i<mp+q$, with $m\in\IN$
do not belong to $K$.

\item[(e)] $a_{\alpha_0+q,\beta_0+p+q}, a_{\alpha_0+p+q,\beta_0+q}\in K$.
\smallskip

\item[(f)]
The points $a_{\alpha_0+q,\beta_0+j}$, for $mp+q<j\le (m+1)p$, with $m\in\IN$
do not belong to $K$.

The points $a_{\alpha_0+i,\beta_0+q}$, for $mp+q<i\le (m+1)p$, with $m\in\IN$
do not belong to $K$.
\smallskip

\item[(g)]
The points $a_{\alpha_0,\beta_0+j}$, for $mp<j< (m+1)p$, with $m\in\IN$
do not belong to $K$.

 The points $a_{\alpha_0+i,\beta_0}$, for $mp<i< (m+1)p$, with $m\in\IN$
do not belong to $K$.
\
\end{description}
\end{lemma}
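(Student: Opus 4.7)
My plan is to prove the seven parts in the stated order, with each built on the previous, and with nearly every step consisting of an explicit triple product, evaluated via Lemma~\ref{lem:0925211}, that either produces a diagonal element $a_{\alpha_0+i,\beta_0+i}$ with $0<i<q$ (contradicting minimality of $q$), a first-row element $a_{\alpha_0,\beta_0+p'}$ with $1\le p'<p$ (contradicting minimality of $p$), or an element already ruled out by an earlier part of the lemma.

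For (a), if $a_{\alpha_0+i,\beta_0+j}\in K$ with $1\le i<q<j$ (or, for the column case, $j<i<q$), then Lemma~\ref{lem:0925212} applied to $a_{\alpha_0,\beta_0}$ and this element via Diagram~\ref{diagram:1022211} or Diagram~\ref{diagram:1022213} yields a diagonal point with index strictly less than $q$, contradicting the minimality of $q$. For (b), suppose $a_{\alpha_0+q,\beta_0+q+i}\in K$ for some $1\le i\le p-q$. Two successive case-(i) applications of Lemma~\ref{lem:0925211} do the work: first $a_{\alpha_0,\beta_0+p}\,a_{\alpha_0+q,\beta_0+q+i}^{\,*}\,a_{\alpha_0,\beta_0+p}=a_{\alpha_0,\beta_0+2p-i}$, and then $a_{\alpha_0,\beta_0+2p-i}\,a_{\alpha_0,\beta_0+p}^{\,*}\,a_{\alpha_0,\beta_0}=a_{\alpha_0,\beta_0+p-i}$, producing a first-row element with $1\le p-i\le p-1$, contradicting minimality of $p$. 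The column statement in (b) is the transpose argument using $a_{\alpha_0+p,\beta_0}$ in the first column. Part (c) is the special case $i=p-q$ of (b), and part (e) follows by direct computation: $a_{\alpha_0+q,\beta_0+q}\,a_{\alpha_0,\beta_0}^{\,*}\,a_{\alpha_0,\beta_0+p}=a_{\alpha_0+q,\beta_0+p+q}$ by case (i), and symmetrically $a_{\alpha_0+p,\beta_0}\,a_{\alpha_0,\beta_0}^{\,*}\,a_{\alpha_0+q,\beta_0+q}=a_{\alpha_0+p+q,\beta_0+q}$ by case (ii).

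Parts (d), (f), and (g) all follow the same template: reduce a general $m$ to $m=1$ by a shift-by-$p$ operation, then contradict one of the earlier parts. For (g), iterating $a_{\alpha_0,\beta_0+j}\,a_{\alpha_0,\beta_0+p}^{\,*}\,a_{\alpha_0,\beta_0}=a_{\alpha_0,\beta_0+j-p}$ (case (i)) $m$ times drives $j$ into the forbidden range $1\le j-mp<p$. For (f), the triple $a_{\alpha_0+q,\beta_0+j}\,a_{\alpha_0+q,\beta_0+p+q}^{\,*}\,a_{\alpha_0,\beta_0}=a_{\alpha_0+q,\beta_0+j-p}$ (case (i), using (e)) realises the inductive shift and at $m=1$ lands in the range $q<j-p\le p$ forbidden by (b). For (d) at $m=1$ (so $p<j<p+q$), Diagram~\ref{diagram:1022213} applied to $a_{\alpha_0,\beta_0+p}$ and $a_{\alpha_0+q,\beta_0+j}$ (for which $\gamma-\alpha=q>j-p=\delta-\beta>0$) produces $x_2=a_{\alpha_0+j-p,\beta_0+j}$, whose row $1\le j-p\le q-1$ lies above the diagonal and is therefore forbidden by (a). For $m\ge 2$ in (d), the two successive products $a_{\alpha_0,\beta_0}\,a_{\alpha_0+q,\beta_0+j}^{\,*}\,a_{\alpha_0+q,\beta_0+q}=a_{\alpha_0+j,\beta_0+q}$ (case (iii)) and $a_{\alpha_0,\beta_0}\,a_{\alpha_0+j,\beta_0+q}^{\,*}\,a_{\alpha_0+p,\beta_0}=a_{\alpha_0+q,\beta_0+j-p}$ (case (iv)) implement the required shift $j\mapsto j-p$ and decrement $m$. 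The column assertions in (d), (f), and (g) then follow by the adjoint symmetry noted in Remark~\ref{rem:0315221}.

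The main obstacle I expect is the case analysis in (d), (f), and (g): for each triple one must identify which branch of Lemma~\ref{lem:0925211} is active under the given inequalities among the six indices, and verify that the inductive shift $j\mapsto j-p$ keeps $j$ within the hypothesised range so that the recursion lands cleanly at the base case $m=1$ rather than crossing a boundary like $j=p+q$ (which would drop us into an already-established membership from (e) and yield no contradiction).
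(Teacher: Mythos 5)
Your proposal is correct and follows essentially the same strategy as the paper's proof: each part is established in order by exhibiting explicit triple products (via Lemma~\ref{lem:0925211} or Lemma~\ref{lem:0925212}) that would force a diagonal point with index below $q$, a first-row or first-column point with index below $p$, or a point already excluded by an earlier part, and all of your individual products check out. The only differences are tactical — you fold (c) into (b), reduce general $m$ to the base case $m=1$ by an iterated shift $j\mapsto j-p$ where the paper uses a single product against $a_{\alpha_0,\beta_0+mp}$, and dispatch the column statements via the adjoint anti-isomorphism of Remark~\ref{rem:0315221} (legitimate here since $r=p$ makes the case-(12) configuration self-adjoint) where the paper writes out the symmetric computations.
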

\begin{proof}

In what follows, we shall  elaborate  on the above diagram.
 (In Diagram~\ref{diagram:0117221}, the symbols $\blacksquare,\blacktriangle,\blacktriangledown, \blacktriangleleft, \blacktriangleright, \blacklozenge, \spadesuit, \clubsuit$, and their blank versions, represent points which, {\it a priori}, do not belong to $K$. They should be temporarily ignored.   Also, Diagram~\ref{diagram:0117222} indicates the locations of (a)-(g).)

(a) Consider the two points $a_{\alpha_0\beta_0}$ and $a_{\alpha_0+i,\beta_0+j}$, the latter indicated by $\blacksquare$ in Diagram~\ref{diagram:0117221},  with $1\le i<q$, $2\le j<\infty$ and $i<j$, and suppose that $a_{\alpha_0+i,\beta_0+j}$ belongs to $K$.  Then by Diagram~\ref{diagram:1022211}, $x_3=a_{\alpha+i,\beta_0+i}$, indicated by $\square$, belongs to $K$, a contradiction. Therefore the rows $1,2,3,\ldots q-1$ contain no elements of $K$ above the diagonal.\smallskip

\begin{diagram}\label{diagram:0117221}
\begin{center}
\begin{tabular}{r rc c c c c c c c c c c c c c c c c}
&&0&1&2&3&4&5&6&7&8&9&10&11&12&13&14&15&16\\
&& $\beta_0$ & & && $q$  &  & &   $p$  &&&&$p+q$& &&2p&&\\
0&$\alpha_0$ &  $\bullet$ &  $\circ$&  $\circ$&  $\circ$&  $\triangleleft$&  $\triangledown$&  $\circ$
& $\bullet$ &  &  &$\clubsuit$ & &&&& & $\cdots$ \\
1&&$\circ$  &$\circ$  & & & &   & &  &   && &&   & &  & &$\cdots$ \\
2&&$\circ$ & & $\square$& & &   & &$\lozenge$  &   & &$\blacksquare$&&&&&&$\cdots$ \\
3&& $\circ$ & &&$\triangle$ &&&&&&&&&&&&&$\cdots$ \\
4&$q$ &  $\circ$ & &$\boxminus$&& $\bullet$   & $  $&  $\blacktriangledown$ &$\blacktriangleleft$   & $  $&  $\blacklozenge$ &$  $  &  $\bullet$ &  $  $ & $  $&$  $  &  $  $&$\cdots$\\
5&& $\circ$ &&&&&&&&&&&&&&&&$\cdots$ \\
6&&$\circ$&&&&$\spadesuit$&& $\ddots$&&&&&&&&&&$\cdots$ \\
7&$r=p$&  $\bullet$  &&&&$\blacktriangleright$ &    &  &$\bullet$  &    & &  & &    &  &   &   &$\cdots$ \\
8&&  &   &  &$\blacktriangle$ &  &    &  &   &    & &  & &    &  &   &   &$\cdots$ \\
9&&  &   &  &  & $\boxplus$ &    &  &   &    &$\ddots$&  & &    &  &   &   &$\cdots$ \\
10&& $\bigstar$&   &  &  &  &    &  &   &    & &  & &    &  &   &   &$\cdots$ \\
11&$p+q$ && & & &$\bullet$ &   &  &  &  &    &  &$\bullet$  &    & &  & &  $\cdots$ \\
12&&  &   &  &  &  &    &  &   &    & &  & &    &  &   &   &$\cdots$ \\
13&&  &   &  &  &  &    &  &   &    & &  & &    &  &   &   &$\cdots$ \\
 14&$2p$ &    & & &   &  &    &    & $\bullet$    &  &    &   &    &    & &$\bullet$  & &$\cdots$\\
 \end{tabular}
\end{center}
\end{diagram}
\medskip

Consider the two points $a_{\alpha_0\beta_0}$ and $a_{\alpha_0+i,\beta_0+j}$, the latter indicated by $\blacktriangle$ in Diagram~\ref{diagram:0117221}, with $1\le j<q$, $2\le i<\infty$ and $i>j$, and suppose that $a_{\alpha_0+i,\beta_0+j}$ belongs to $K$.  Then by Diagram~\ref{diagram:1022213}, $x_2=a_{\alpha+j,\beta_0+j}$, indicated by $\triangle$, belongs to $K$, a contradiction. Therefore  columns $1,2,3,\ldots q-1$ contain no elements of $K$ below the diagonal.\smallskip

(b) Assuming that $a_{\alpha_0+q,\beta_0+q+j}$, indicated by $\blacktriangledown$ in Diagram~\ref{diagram:0117221}, with $1\le j<p-q$, belongs to $K$, we have that 
$$K\supset K_{\alpha_0+q,\beta_0+q}^j
=\{a_{\alpha_0+q+\ell j,\beta_0+q+mj}: \ell,m\ge 0\}.
$$
Then by Lemma~\ref{lem:0925211}(i),
$$
a_{\alpha_0,\beta_0+p}a^*_{\alpha_0+q+\ell j,\beta_0+q+mj}a_{\alpha_0\beta_0}=a_{\alpha_0,\beta_0+(\ell-m)j+p}\in K,
$$
provided that $0\le(\ell-m)j+p$ and $q+mj\le p$.
Then with $\ell=0$ and $m=1$, we have that $a_{\alpha_0,\beta_0-j+p}$, indicated by $\triangledown$, belongs to $K$, which is a contradiction.  \smallskip

 For the second statement of (b), the proof is the same, namely, assuming that $a_{\alpha_0+q+i,\beta_0+q}$, indicated by $\spadesuit$, with $1\le i<p-q$, belongs to $K$, we have that 
$$K\supset K_{\alpha_0+q,\beta_0+q}^i
=\{a_{\alpha_0+q+\ell i,\beta_0+q+mi}: \ell,m\ge 0\}.
$$
Then by Lemma~\ref{lem:0925211}(i),
$$
a_{\alpha_0,\beta_0+p}a^*_{\alpha_0+q+\ell i,\beta_0+q+mi}a_{\alpha_0\beta_0}=a_{\alpha_0,\beta_0+(\ell-m)i+p}\in K,
$$
provided that $0\le(\ell-m)i+p$ and $q+mi\le p$.
Then with $\ell=0$ and $m=1$, we have that $a_{\alpha_0,\beta_0-i+p}$, indicated by $\triangledown$, belongs to $K$, which is a contradiction. \smallskip

(c) Assuming that $a_{\alpha_0+q,\beta_0+p}$, indicated by $\blacktriangleleft$,  belongs to $K$, then by Lemma~\ref{lem:0925211}(iii), 
$$
a_{\alpha_0,\beta_0+p}a^*_{\alpha_0+q,\beta_0+p}a_{\alpha_0+q,\beta_0+q}=
a_{\alpha_0,\beta_0+q},
$$
indicated by $\triangleleft$, belongs to $K$,
a contradiction.
Assuming that $a_{\alpha_0+p,\beta_0+q}$, indicated by $\blacktriangleright$,  belongs to $K$, then applying Diagram~\ref{diagram:1022214} to the two points $a_{\alpha_0+q,\beta_0+q}$ and $\blacktriangleright$ we have $x_1=a_{\alpha_0+q,\beta_0+p}\in K$, a contradiction to the previous paragraph.\smallskip

(d) Suppose $mp<j<mp+q$ and assume that $a_{\alpha_0+q,\beta_0+j}$, indicated by $\blacklozenge$ (with $m=1$),  belongs to $K$. Then by Lemma~\ref{lem:0925211}(iii),
$$
a_{\alpha_0,\beta_0+mp}a^*_{\alpha_0+q,\beta_0+j}a_{\alpha_0+q,\beta_0+q}=a_{\alpha_0+j-mp,\beta_0+q},
$$
indicated by $\lozenge$, belongs to $K$, a contradiction to (i), since $j-mp<q$.\smallskip

 Suppose $mp<i<mp+q$ and assume that $a_{\alpha_0+i,\beta_0+q}$, indicated by $\boxplus$ (with $m=1$),  belongs to $K$. Then by Lemma~\ref{lem:0925211}(iv),
$$
a_{\alpha_0+q,\beta_0+q}a^*_{\alpha_0+i,\beta_0+q}a_{\alpha_0+mp,\beta_0}=a_{\alpha_0+q,\beta_0+i-mp},
$$
indicated by $\boxminus$, belongs to $K$, a contradiction to (i$^\prime$), since $i-mp<q$.\smallskip

(e) By Diagram~\ref{diagram:1027211} or ~\ref{diagram:1027213}, applied to the vertices $a_{\alpha_0+q,\beta_0+q}$ and $a_{\alpha_0+p,\beta_0}$, $x_1=a_{\alpha_0+q,\beta_0+p+q}\in K$, and then by Diagram~\ref{diagram:1027212}, $a_{\alpha_0+p+q,\beta_0+q}\in K$.\smallskip

In the proofs of (f) and (g), we assume with no loss of generality, that $\alpha_0=\beta_0=0$.
\smallskip

(f)
Suppose that $(q,j)\in K$ with $mp+q<j\le (m+1)p$. By Lemma~\ref{lem:0925211}(iii), $(0,mp)(q,j)^*(q,q)=(j-mp,q)$.  This is a contradiction to (b) since $p\ge j-mp>q$.

Suppose that $(i,q)\in K$ with $mp+q<i\le (m+1)p$. By Lemma~\ref{lem:0925211}(iv), $(q,q)(i,q)^*(mp,0)=(q,i-mp)$.  This is a contradiction to (b) since $p\ge i-mp>q$. 
\smallskip

(g) Supposing that $a_{\alpha_0,\beta_0+j}$, with $p< j<\infty$ and $j\not\in\{2p,3p,\ldots\}$, denoted by $\clubsuit$ in Diagram~\ref{diagram:0117221}, belongs to $K$, we apply Diagram~\ref{diagram:1022212} to $a_{\alpha_0,\beta_0+mp}$ and $a_{\alpha_0,\beta_0+j}$, where $mp<j<(m+1)p$ to get $x_2=a_{\alpha_0+j-mp,\beta_0+j}\in K$, a contradiction since $j-mp<p$.  Hence no element of $K$ occupies any position in the row determined by $\alpha_0$ except for the points $a_{\alpha_0,\beta_0+mp}$ for $m\in\IN_0$.

Supposing that $a_{\alpha_0+i,\beta_0}$, with $p< i<\infty$ and $i\not\in\{2p,3p,\ldots\}$, denoted by $\bigstar$, belongs to $K$, we apply Diagram~\ref{diagram:1022214} to $a_{\alpha_0+mp,\beta_0}$ and $a_{\alpha_0+i,\beta_0}$, where $mp<i<(m+1)p$ to get a contradiction.  Hence no element of $K$ occupies any position in the column determined by $\beta_0$ except for the points $a_{\alpha_0+\ell p,\beta_0}$ for $\ell\in\IN_0$.\end{proof}

\smallskip

We now have Diagram~\ref{diagram:0117222} for case (12) with $a_{\alpha_0,\beta_0}\in K$, and 
it is clear that $K\cap K_{q,q}$ is also in subcase (12), 
so it follows that $K=\bigcup_{i=0}^\infty K_{q_i,q_i}^p$, where $a_{\alpha_0+q_i,\beta_0+q_i}$ are  the points of $K$ lying on the diagonal with
\[
q=q_0<q_1<q_2<\cdots<q_n<q_{n+1}<\cdots.
\]

\begin{proposition}\label{prop:1213211}
In case (12), with (necessarily) $a_{\alpha_0,\beta_0}\in K$,  let $a_{\alpha_0+q_i,\beta_0+q_i}$, $0\le i<\infty$,be  the points of $K$ lying on the diagonal, such that
\[
q=q_0<q_1<q_2<\cdots<q_n<p\quad \hbox{ and }\quad p<q_{n+1}<q_{n+2}<\cdots.
\]
Then
\[
K= \bigcup_{i=0}^n K_{q_i,q_i}^p.
\]
\end{proposition}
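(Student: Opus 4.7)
The plan is to prove both inclusions in $K=\bigcup_{i=0}^n K_{q_i,q_i}^p$ by invoking Lemma~\ref{lem:1201211} and the triple-product description of Lemma~\ref{lem:0925211}, together with a periodicity argument that collapses the a~priori infinite union of diagonal generators to the finite set $\{q_0,\ldots,q_n\}$.

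For the inclusion $\supseteq$, note that $a_{\alpha_0+q_i,\beta_0+q_i}\in K$ by hypothesis for each $0\le i\le n$, and $a_{\alpha_0+q,\beta_0+q+p},\, a_{\alpha_0+q+p,\beta_0+q}\in K$ by Lemma~\ref{lem:1201211}(e). Apply Lemma~\ref{lem:0929211} to the pair $(a_{\alpha_0+q,\beta_0+q+p},\, a_{\alpha_0+q_i,\beta_0+q_i})$; its indices satisfy the required $\gamma\ge\alpha$ and $\delta\le\beta$ because $q\le q_i<p<q+p$. The conclusion produces $a_{\alpha_0+q_i,\beta_0+q_i+p}$ and $a_{\alpha_0+q_i+p,\beta_0+q_i}$ in $K$. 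The three generators $(q_i,q_i),\, (q_i,q_i+p),\, (q_i+p,q_i)$ now suffice, via the case analysis of Example~\ref{example:1220211}, to force $K_{q_i,q_i}^p\subseteq K$.

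For the inclusion $\subseteq$, the argument of Lemma~\ref{lem:1201211} can be reapplied with the origin shifted from $(\alpha_0,\beta_0)$ to $(\alpha_0+q_i,\beta_0+q_i)$: the shifted configuration is again subcase (12) with the \emph{same} period $p$, since $(q_i,q_i+p),\, (q_i+p,q_i)\in K$ and no smaller horizontal or vertical edge point exists (this follows from parts (b)--(f) of Lemma~\ref{lem:1201211} transposed to the shifted origin). Hence row $\alpha_0+q_i$ above the diagonal and column $\beta_0+q_i$ below the diagonal of $K$ contain only points of the form $(q_i,q_i+\ell p)$ and $(q_i+\ell p,q_i)$ respectively, and every off-diagonal point of $K$ lies on precisely one such row-column intersection. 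Finally, each diagonal index $q_i$ with $i>n$ must have the form $q_j+mp$ with $0\le j\le n$ and $m\ge 1$: applying Lemma~\ref{lem:0925211} to the triple $a_{\alpha_0+p,\beta_0}\, a^*_{\alpha_0+q_i,\beta_0+q_i}\, a_{\alpha_0,\beta_0+p}$ produces the diagonal point $a_{\alpha_0+q_i-p,\beta_0+q_i-p}\in K$, and iterating this reduction eventually deposits the index in the initial block $[q_0,q_n]\subset [q,p)$.

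The main obstacle will be the second inclusion, specifically the verification that shifting the origin to $(\alpha_0+q_i,\beta_0+q_i)$ genuinely preserves subcase (12) with unchanged period $p$, so that the local structure at every $(q_i,q_i)$ mirrors that at the original origin. Once this periodicity is secured, the finite union $\bigcup_{i=0}^n K_{q_i,q_i}^p$ automatically absorbs all higher diagonal indices via the $p$-translate identification $(q_j+mp,q_j+mp)\in K_{q_j,q_j}^p$, completing the description of~$K$.
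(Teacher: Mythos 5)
Your two-step strategy---first establish $K=\bigcup_{i\ge 0}K_{q_i,q_i}^p$ over \emph{all} diagonal points, then collapse the union by showing each $q_i$ with $i>n$ is congruent mod $p$ to some $q_j$ with $j\le n$---is exactly the skeleton of the paper's argument; the first step is what the paper carries out in the paragraph preceding the proposition, so its proof of the proposition proper consists only of the collapsing step. Your $\supseteq$ argument via Lemma~\ref{lem:1201211}(e) and Lemma~\ref{lem:0929211} is correct, though the closing citation should be to Diagrams~\ref{diagram:1022212} and~\ref{diagram:1022214} (as in the proof of Proposition~\ref{prop:1211211}) rather than to Example~\ref{example:1220211}. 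Where you genuinely diverge is the collapse: the paper multiplies an arbitrary $(q_k+\ell p,q_k+mp)$ by $(\alpha_0,\beta_0)^*$ and a suitable $(\ell'p,m'p)\in K^p_{0,0}$ so that the product lands in the strip $\{(x,d):0\le d<p\}$, forcing $d=q_i$ with $i\le n$; you instead descend the diagonal by $p$ at a time. Your descent is the cleaner idea, but as written it is miscomputed: by Lemma~\ref{lem:0925211}(iv), $a_{\alpha_0+p,\beta_0}\,a^*_{\alpha_0+q_i,\beta_0+q_i}\,a_{\alpha_0,\beta_0+p}=a_{\alpha_0+q_i+p,\beta_0+q_i+p}$, which moves \emph{up} the diagonal; the product you want has the outer factors interchanged, $a_{\alpha_0,\beta_0+p}\,a^*_{\alpha_0+q_i,\beta_0+q_i}\,a_{\alpha_0+p,\beta_0}$, which for $q_i\ge p$ does equal $a_{\alpha_0+q_i-p,\beta_0+q_i-p}$ by Lemma~\ref{lem:0925211}(iv). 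Finally, the step you flag as the main obstacle is indeed where the real work sits, and your proposed fix is circular as stated: you cannot ``transpose parts (b)--(f) of Lemma~\ref{lem:1201211} to the shifted origin'' before knowing that the shifted configuration is again in subcase (12) with the same period $p$. The correct route (and the one the paper uses implicitly when it asserts that $K\cap K_{q,q}$ is again in subcase (12)) is that parts (b), (d), (e), (f) applied at the \emph{current} origin already describe row $q$ and column $q$ completely---the first off-diagonal points of $K$ there are $(q,q+p)$ and $(q+p,q)$---so the configuration based at the next diagonal point has period $p$, and one then proceeds by induction along the diagonal. With the factor order corrected and that induction made explicit, your proof goes through.
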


\begin{proof}
We know that $K=\bigcup_{i=0}^\infty K_{q_i,q_i}^p$. We need to show that $\bigcup_{i=n+1}^\infty K_{q_i,q_i}^p\subset \bigcup_{i=0}^n K_{q_i,q_i}^p$. For this it suffices to show that each $q_{n+j}$ with $j\ge 1$ is congruent to some element of $\{q_0,q_1,\ldots q_n\}$,
 modulo $p$.
 
Let $(q_k+\ell p,q_k+mp)\in K_{q_k,q_k}^p$ for some $k\ge n+1$ with fixed $\ell,m$,
and let  $(\ell' p,m'p)\in K_{0,0}^p$ with variable $\ell',m'$.
By Lemma~\ref{lem:0925211}(i), 
\[
(q_k+\ell p,q_k+mp)(\alpha_0,\beta_0)^*(\ell' p,m'p)=(q_k+\ell p,q_k+(m'+m-\ell')p)\in K
\]
as long as     $q_k+mp\ge 0$ and $\ell' p\le q_k+mp$,  We now choose $\ell'$ such that 
$q_k=(\ell'-m)p+d$, where $\ell'-m\ge 1$ and $0\le d<p$. To check that $\ell' p\le q_k+mp$, we note that   $(\ell'-m) p=q_k-d\le q_k$.  We now have 
\[
(q_k+\ell p,q_k+(m'+m-\ell')p)=(d+(\ell+\ell'-m)p,d).
\]
Thus $(d+tp,d)\in K=\bigcup_{i=0}^\infty K_{q_i,q_i}^p$ for some $t\ge 0$, so that $(d+tp,d)=(q_i+rp,q_i+sp)$ for some $i,r,s\ge 0$. Hence $d+tp=q_i+rp$ and $d=q_i+sp$, so by subtraction $tp=(r-s)p$ and $d+(r-s)p=q_i+rp$ so that $d=q_i+sp$. Since $d<p$, $s=0$ and $d=q_i$ with $i\le n$
\end{proof}

\begin{diagram}\label{diagram:0117222}
\begin{center}
\begin{tabular}{r rc c c c l c c c c c c l c c c c c}
&&0&1&2&3&4&5&6&7&8&9&10&11&12&13&14&15&16\\
&& $\beta_0$ & & && $q$  &  & &   $p$  &&&&$p+q$& &&2p&&\\
0&$\alpha_0$ &  $\bullet$ &  $\circ$&  $\circ$&  $\circ$&  $\circ$&  $\circ$&  $\circ$
& $\bullet$ &  $\circ$&  $(g)$& $\circ$&$\circ$&$(g)$ &$\circ$&$\bullet$& $\circ$&$\cdots$  \\
1&& $\circ$ & $\circ$ & $(a)$&$\circ$ &$\circ$ &  $\circ$ & $\circ$&$\circ$  &  $\circ$ &$\circ$&$\circ$ &$\circ$&  $\circ$ & $\circ$&$\circ$  &  $\circ$&$\cdots$ \\
2&& $\circ$ & $(a)$ &$\circ$ & $(a)$&$\circ$ &  $\circ$ & $\circ$&$\circ$  &  $\circ$ &$\circ$ &$\circ$&$\circ$&  $\circ$ & $\circ$&$\circ$  &  $\circ$&$\cdots$ \\
3&& $\circ$ & $\circ$ &$(a)$ &$\circ$& $(a)$&  $\circ$  & $\circ$&$\circ$  &  $\circ$ &$\circ$&$\circ$ &$\circ$&  $\circ$ & $\circ$&$\circ$  &  $\circ$&$\cdots$ \\
4&$q$ &  $\circ$  &$\circ$&$\circ$&  $(a)$& $\bullet$   & $(b)$&  $\circ$ &$(c)$   & $\circ$&  $(d)$ &$\circ$  &  $\bullet(e)$ &  $\circ$ & $(f)$&$\circ$  &  $\circ$&$\cdots$\\
5&& $\circ$ & $\circ$ & $\circ$&$\circ$ &$(b)$ &  &&&&&&&&&&&$\cdots$ \\
6&& $\circ$ & $\circ$ & $\circ$&$\circ$&$\circ$ & & $\ddots$&&&&&&&&&&$\cdots$ \\
7&$r=p$&  $\bullet$  &$\circ$&$\circ$&$\circ$&$(c)$ &&&$\bullet$  &&&&&&&$\bullet$&&$\cdots$ \\
8&& $\circ$ & $\circ$ & $\circ$&$\circ$ &$\circ$ &&&& &&&&&&&&$\cdots$ \\
9&& $(g)$ & $\circ$ & $\circ$&$\circ$ &$(d)$ &&&& &$\ddots$&&&&&&&$\cdots$ \\
10&& $\circ$ & $\circ$ & $\circ$&$\circ$ &$\circ$ &&&&&&&&&&&&$\cdots$ \\
11&$p+q$ &$\circ$&$\circ$&$\circ$&$\circ$&$\bullet(e)$ &&&&&&&$\bullet$  &&&&&  $\cdots$ \\
12&&$(g)$&$\circ$&$\circ$&$\circ$&$\circ$&&&&&&&&  $\ddots$ &&&&$\cdots$ \\
13&&$\circ$&$\circ$&$\circ$& $\circ$&$(f)$ &&&&&&&&& &&&$\cdots$ \\
 14&$2p$ &  $\bullet$  &$\circ$&$\circ$&  $\circ$& $\circ$&  && $\bullet$    &&&&&&&$\bullet$  &  &$\cdots$\\
  15& &  $\circ$ &$\circ$&$\circ$&  $\circ$& $\circ$&  &&    &&&&&&& &$\ddots$  &$\cdots$\\
 16& &  $\vdots$ &$\vdots$&$\vdots$&  $\vdots$& $\vdots$&  && $\vdots$   &&&&$\vdots$&&&$\vdots$ &  &$\cdots$\\

 \end{tabular}
\end{center}

\end{diagram}

\begin{proposition}\label{prop:1211211}
In case (13), $K=K_{\alpha_0,\beta_0}^p$

\end{proposition}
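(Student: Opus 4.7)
The plan is to establish the two inclusions $K_{\alpha_0,\beta_0}^p\subseteq K$ and $K\subseteq K_{\alpha_0,\beta_0}^p$ separately. In case~(13) the hypotheses $r=p=q$ together with Lemma~\ref{lem:1112211} give $a_{\alpha_0,\beta_0}, a_{\alpha_0,\beta_0+p}, a_{\alpha_0+p,\beta_0}, a_{\alpha_0+p,\beta_0+p}\in K$. For the first inclusion I will induct on $m$ using Lemma~\ref{lem:0925211}(i) in the form $a_{\alpha_0,\beta_0+mp}\,a_{\alpha_0,\beta_0}^*\,a_{\alpha_0,\beta_0+p}=a_{\alpha_0,\beta_0+(m+1)p}$ to produce all $a_{\alpha_0,\beta_0+mp}\in K$, and symmetrically all $a_{\alpha_0+\ell p,\beta_0}\in K$; a final application of Lemma~\ref{lem:0925211}(iii) yields $a_{\alpha_0+\ell p,\beta_0}\,a_{\alpha_0,\beta_0}^*\,a_{\alpha_0,\beta_0+mp}=a_{\alpha_0+\ell p,\beta_0+mp}$ for all $\ell,m\ge 0$.

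For the reverse inclusion the strategy is ``reduction modulo $p$.'' Let $a_{\alpha_0+u,\beta_0+v}\in K$ with $u,v\ge 0$, and write $u=u_0p+u_1$, $v=v_0p+v_1$ with $0\le u_1,v_1<p$; the task is to force $u_1=v_1=0$. I will first prove an auxiliary claim (Case~A): if $a_{\alpha_0+w,\beta_0}\in K$, then $w\in p\IN_0$. For this, writing $w=w_0p+w_1$ with $0\le w_1<p$, Diagram~\ref{diagram:1022214} applied to $a_{\alpha_0+w_0p,\beta_0}$ and $a_{\alpha_0+w,\beta_0}$ (both in $K$ by Step~1 and hypothesis) produces $a_{\alpha_0+w_0p,\beta_0+w_1}\in K$; projecting via Lemma~\ref{lem:0925211}(iii) gives $a_{\alpha_0,\beta_0+w_1}\in K$, and minimality of $p$ forces $w_1=0$.

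The main reduction then proceeds in three applications of Lemma~\ref{lem:0925211}(iv), where the floor choices $u_0p\le u$, $v_0p\le v$ are precisely what place each step in case~(iv). First, $a_{\alpha_0,\beta_0}\,a_{\alpha_0+u,\beta_0+v}^*\,a_{\alpha_0+u_0p,\beta_0}=a_{\alpha_0+v,\beta_0+u_1}$, and a second instance of the same pattern applied to $a_{\alpha_0+v,\beta_0+u_1}$ (now using $v_0p\le v$) gives $a_{\alpha_0+u_1,\beta_0+v_1}\in K$. Next, $a_{\alpha_0+u_1,\beta_0+v_1}\,a_{\alpha_0+p,\beta_0+p}^*\,a_{\alpha_0,\beta_0}=a_{\alpha_0+u_1+p-v_1,\beta_0+p}$, and projecting to column $\beta_0$ via Lemma~\ref{lem:0925211}(iii) puts $a_{\alpha_0+u_1+p-v_1,\beta_0}$ in $K$. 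Case~A forces $u_1+p-v_1\in p\IN_0$, and since $0<u_1+p-v_1<2p$, the only option is $u_1=v_1$. The point $a_{\alpha_0+u_1,\beta_0+u_1}\in K$ then lies on the diagonal with $0\le u_1<p$, and the minimality of $q=p$ forces $u_1=v_1=0$, completing the proof. I expect the main hurdle to be keeping the four cases of Lemma~\ref{lem:0925211} straight; the floor-quotient choices above keep each main step in case~(iv), so the remaining work is essentially arithmetic.
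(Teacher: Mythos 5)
Your proof is correct, but it takes a genuinely different route from the paper's. The paper first establishes $K\supseteq K_{\alpha_0,\beta_0}^p$ (as you do), then eliminates forbidden positions region by region using the geometric Diagrams~\ref{diagram:1022211}--\ref{diagram:1022214}: first the strip of rows and columns between $0$ and $p$, then the row $\alpha_0$ and column $\beta_0$, then the row $\alpha_0+p$, the column $\beta_0+p$, and the diagonal segment between $p$ and $2p$; it then observes that $K\cap K_{p,p}$ is again in case~(13) and iterates. Your argument replaces this positional elimination and block induction with a direct reduction modulo $p$: three applications of Lemma~\ref{lem:0925211}(iv) carry an arbitrary $a_{\alpha_0+u,\beta_0+v}\in K$ to $a_{\alpha_0+u_1,\beta_0+v_1}$ with $u_1,v_1$ the residues of $u,v$, and then your auxiliary Case~A (residues in column $\beta_0$ must vanish) together with the minimality of $q=p$ forces $u_1=v_1=0$. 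I checked the four triple products and the two ``projections''; they all land in the claimed cases of Lemma~\ref{lem:0925211} (a couple of your sub-case labels sit on boundaries where cases (i)--(iv) coincide, so the citations are harmless), and all intermediate elements are indeed in $K$ by your Step~1 or by hypothesis. What your approach buys is a self-contained, finite computation that avoids the paper's somewhat informal closing step (``the result follows by applying successively what has already been proved''); what the paper's approach buys is uniformity with the surrounding case analysis, since the same diagrammatic eliminations are reused in case~(12) and in Lemma~\ref{lem:1201211}.
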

\begin{proof} The diagram for case (13) is the following. (Temporarily ignore the symbols $\blacksquare,\blacktriangle,\blacktriangledown,\blacktriangleleft$)

\begin{diagram}\label{diagram:0117223}
\begin{center}
\begin{tabular}{r c c c c c c c c c c c }
& $\beta_0$ & & &  &$p$ &  & &   &  \\
$\alpha_0$ & $\bullet$& $\circ$ &    $\circ$&  $\circ$&  $\bullet$&  $\cdots$&  $\blacktriangledown$&  $\cdots$&   \\
& $\circ$ & $\circ$ & & & &   & &  &    \\
&  $\circ$  && $\circ$ & &   &$\blacksquare$ &   &   &    \\
 &  $\circ$  &&& $\circ$ &    & &   &   &    \\

$r=p=q$&  $\bullet$  &&&& $\bullet$ & &   &    &    \\

 &  $\vdots$  &&&&& $\ddots$ &      &   &    \\

&  $\blacktriangleleft$  &&&&&& $\ddots$ &    &    \\

 &  $\vdots$  &&$\blacktriangle$&&&&& $\ddots$ &    \\




\end{tabular}
\end{center}
\end{diagram}
\smallskip

In the first place, we notice that by Diagrams~\ref{diagram:1022212} and ~\ref{diagram:1022214}, 
$$
K\supset K_{\alpha_0,\beta_0}^p=\{a_{\alpha_0+\ell p,\beta_0+mp}:\ell,m\in\IN_0\}.
$$
The next four paragraphs refer to Diagram~\ref{diagram:0117223}.

Supposing that $a_{\alpha_0+i,\beta_0+j}$ for $1\le i<p$ and $2\le j<\infty$, denoted by $\blacksquare$, belongs to $K$, we apply Diagram~\ref{diagram:1022211} to $a_{\alpha_0\beta_0}$ and 
$a_{\alpha_0+i,\beta_0+j}$  to get $x_3=a_{\alpha_0+i,\beta_0+i}\in K$, a contradiction.  Hence no element of $K$ occupies any position above the diagonal in the rows determined by $\alpha_0+i$, for  $1\le i<p$.
\smallskip

Supposing that $a_{\alpha_0+i,\beta_0+j}$ for $2\le i<\infty$ and $1\le j<p$, denoted by $\blacktriangle$, belongs to $K$, we apply Diagram~\ref{diagram:1022213} to $a_{\alpha_0\beta_0}$ and 
$a_{\alpha_0+i,\beta_0+j}$  to get $x_2=a_{\alpha_0+j,\beta_0+j}\in K$, a contradiction.  Hence no element of $K$ occupies any position below the diagonal in the columns determined by $\beta_0+j$, for  $1\le j<p$
\smallskip

Supposing that $a_{\alpha_0,\beta_0+j}$, with $p< j<\infty$ and $j\not\in\{2p,3p,\ldots\}$, denoted by $\blacktriangledown$, belongs to $K$, we apply Diagram~\ref{diagram:1022212} to $a_{\alpha_0,\beta_0+kp}$ and $a_{\alpha_0,\beta_0+j}$, where $kp<j<(k+1)p$ to get $x_2=a_{\alpha_0+j-kp,\beta_0+j}\in K$, a contradiction since $j-kp<p$.  Hence no element of $K$ occupies any position in the row determined by $\alpha_0$ except for the points $a_{\alpha_0,\beta_0+mp}$ for $m\in\IN_0$.
\smallskip

Supposing that $a_{\alpha_0+i,\beta_0}$, with $p< i<\infty$ and $i\not\in\{2p,3p,\ldots\}$, denoted by $\blacktriangleleft$, belongs to $K$, we apply Diagram~\ref{diagram:1022214} to $a_{\alpha_0+kp,\beta_0}$ and $a_{\alpha_0+i,\beta_0}$, where $kp<i<(k+1)p$ to get a contradiction.  Hence no element of $K$ occupies any position in the column determined by $\beta_0$ except for the points $a_{\alpha_0+\ell p,\beta_0}$ for $\ell\in\IN_0$.
\smallskip

We now have

\begin{center}
\begin{tabular}{r c c c c c c c c c c c }
& $\beta_0$ & & &  &$p$ &  & & & $2p$   \\
$\alpha_0$ & $\bullet$& $\circ$ &    $\circ$&  $\circ$&  $\bullet$&  $\circ$&  $\circ$ &$\circ$&  $\bullet$ &$\circ$&$\cdots$ \\
& $\circ$ & $\circ$ &$\circ$ &$\circ$ &$\circ$ & $\circ$  &$\circ$ & $\circ$ &  $\circ$ & $\circ$ &  $\cdots$  \\
& $\circ$ & $\circ$ &$\circ$ &$\circ$ &$\circ$ & $\circ$  &$\circ$ & $\circ$ &  $\circ$ & $\circ$ &  $\cdots$  \\
& $\circ$ & $\circ$ &$\circ$ &$\circ$ &$\circ$ & $\circ$  &$\circ$ & $\circ$ &  $\circ$ & $\circ$ &  $\cdots$  \\

$r=p=q$&  $\bullet$  &$\circ$&$\circ$&$\circ$& $\bullet$ & &   &    &    $\bullet$ &&\\

 &  $\circ$  &$\circ$&$\circ$&$\circ$    && $\ddots$ &  &&    &   &    \\

 &  $\circ$  &$\circ$&$\circ$&$\circ$    &&& $\ddots$ &  &&       &    \\

 &  $\circ$  &$\circ$&$\circ$&$\circ$   && && $\ddots$ &      &   &    \\

$2p$ & $\bullet$ & $\circ$ & $\circ$ & $\circ$ & $\bullet$  &&&& $\bullet$ &     &\\

 &  $\circ$  &$\circ$&$\circ$&$\circ$    &&& &  &&   $\ddots$     &    \\

 &  $\vdots$  &$\vdots$&$\vdots$&$\vdots$    &&&  &  &&       & $\ddots$   \\

\end{tabular}
\end{center}
\smallskip

We next consider what happens in the row defined by $\alpha_0+p$.  

Supposing that $a_{\alpha_0+p,\beta_0+p+i}$ belongs to $K$, with $1\le i<p$,  then applying Diagram~\ref{diagram:1022213} to $a_{\alpha_0,\beta_0+p}$ and $a_{\alpha_0+p,\beta_0+p+i}$ we obtain $x_2=a_{\alpha_0+i,\beta_0+p+i}\in K$, 
which is a contradiction, and repeating this argument shows that no element of $K$ occupies any position in the row determined by $\alpha_0+p$ except for the points $a_{\alpha_0+ p,\beta_0+mp}$ for $m \in\IN_0$.

We next consider what happens in the column defined by $\beta_0+p$.  

Supposing that $a_{\alpha_0+p+i,\beta_0+p}$ belongs to $K$,  with $1\le i<p$, then applying Diagram~\ref{diagram:1022211} to $a_{\alpha_0+p,\beta_0}$ and $a_{\alpha_0+p+i,\beta_0+p}$ we obtain $x_3=a_{\alpha_0+p+i,\beta_0+i}\in K$, which is a contradiction, and repeating this argument shows that no element of $K$ occupies any position in the column determined by $\beta_0+p$ except for the points $a_{\alpha_0+ \ell p,\beta_0+p}$ for $\ell \in\IN_0$.

We now have (ignore temporarily the symbol $\blacksquare$)

\begin{center}
\begin{tabular}{r c c c c c c c c c c c }
& $\beta_0$ & & &  &$p$ &  & & & $2p$   \\
$\alpha_0$ & $\bullet$& $\circ$ &    $\circ$&  $\circ$&  $\bullet$&  $\circ$&  $\circ$ &$\circ$&  $\bullet$ &$\circ$&$\cdots$ \\
& $\circ$ & $\circ$ &$\circ$ &$\circ$ &$\circ$ & $\circ$  &$\circ$ & $\circ$ &  $\circ$ & $\circ$ &  $\cdots$  \\
& $\circ$ & $\circ$ &$\circ$ &$\circ$ &$\circ$ & $\circ$  &$\circ$ & $\circ$ &  $\circ$ & $\circ$ &  $\cdots$  \\
& $\circ$ & $\circ$ &$\circ$ &$\circ$ &$\circ$ & $\circ$  &$\circ$ & $\circ$ &  $\circ$ & $\circ$ &  $\cdots$  \\

$r=p=q$&  $\bullet$  &$\circ$&$\circ$&$\circ$& $\bullet$ &$\circ$ &  $\circ$ & $\circ$   &    $\bullet$ &$\circ$&$\cdots$\\

 &  $\circ$  &$\circ$&$\circ$&$\circ$    &$\circ$ & $\ddots$ &  &&    &   &    \\

 &  $\circ$  &$\circ$&$\circ$&$\circ$    &$\circ$&& $\blacksquare$ &  &&       &    \\

 &  $\circ$  &$\circ$&$\circ$&$\circ$   &$\circ$& && $\ddots$ &      &   &    \\

$2p$ & $\bullet$ & $\circ$ & $\circ$ & $\circ$ & $\bullet$  &&&& $\bullet$ &     &\\

 &  $\circ$  &$\circ$&$\circ$&$\circ$    &$\circ$&& &  &&   $\ddots$     &    \\

 &  $\vdots$  &$\vdots$&$\vdots$&$\vdots$    &$\vdots$&&  &  &&       & $\ddots$   \\

\end{tabular}
\end{center}
\smallskip

Finally, we consider what happens along the diagonal.

Supposing that $a_{\alpha_0+p+i,\beta_0+p+i}$, denoted by $\blacksquare$, with $1\le i<p$, belongs to $K$, we apply Diagram~\ref{diagram:1022211} to $a_{\alpha_0+p,\beta_0}$ and $a_{\alpha_0+p+i,\beta_0+p+i}$,
we obtain $x_3=a_{\alpha_0+p+i,\beta_0+i}\in K$, which is a contradiction, and repeating this argument shows that no element of $K$ occupies any position in the diagonal  except for the points $a_{\alpha_0+ \ell p,\beta_0+\ell p}$ for $\ell \in\IN_0$.

We now have

\begin{center}
\begin{tabular}{r c c c c c c c c c c c }
& $\beta_0$ & & &  &$p$ &  & & & $2p$   \\
$\alpha_0$ & $\bullet$& $\circ$ &    $\circ$&  $\circ$&  $\bullet$&  $\circ$&  $\circ$ &$\circ$&  $\bullet$ &$\circ$&$\cdots$ \\
& $\circ$ & $\circ$ &$\circ$ &$\circ$ &$\circ$ & $\circ$  &$\circ$ & $\circ$ &  $\circ$ & $\circ$ &  $\cdots$  \\
& $\circ$ & $\circ$ &$\circ$ &$\circ$ &$\circ$ & $\circ$  &$\circ$ & $\circ$ &  $\circ$ & $\circ$ &  $\cdots$  \\
& $\circ$ & $\circ$ &$\circ$ &$\circ$ &$\circ$ & $\circ$  &$\circ$ & $\circ$ &  $\circ$ & $\circ$ &  $\cdots$  \\

$r=p=q$&  $\bullet$  &$\circ$&$\circ$&$\circ$& $\bullet$ &$\circ$ &  $\circ$ & $\circ$   &    $\bullet$ &$\circ$&$\cdots$\\

 &  $\circ$  &$\circ$&$\circ$&$\circ$    &$\circ$ & $\circ$ &  &&    &   &    \\

 &  $\circ$  &$\circ$&$\circ$&$\circ$    &$\circ$&& $\circ$ &  &&       &    \\

 &  $\circ$  &$\circ$&$\circ$&$\circ$   &$\circ$& && $\circ$ &      &   &    \\

$2p$ & $\bullet$ & $\circ$ & $\circ$ & $\circ$ & $\bullet$  &&&& $\bullet$ &     &\\

 &  $\circ$  &$\circ$&$\circ$&$\circ$    &$\circ$&& &  &&   $\circ$     &    \\

 &  $\vdots$  &$\vdots$&$\vdots$&$\vdots$    &$\vdots$&&  &  &&       & $\ddots$   \\

\end{tabular} 
\end{center}
\smallskip

We are now in the position at the beginning of the proof, namely, the semiheap $K\cap K_{p,p}$ is in subcase (13) of case 3.3.3, and the result follows by applying successively what has already been proved.
\end{proof}

We shall now consider cases (1) and (2) with $a_{\alpha_0,\beta_0}\not\in K$ (See Lemma~\ref{lem:1112211}), and assume with no loss of generality, that $\alpha_0=\beta_0=0$. We consider Diagram~\ref{diagram:0127222} for case (1) and establish the following notation. The points of $K$ on the row determined by $\alpha_0$, indicated by $\blacktriangle$, are $a_{\alpha_0,\beta_0+m_i}$, with $1\le m_1<m_2< \cdots$, and the points  on the column determined by $\beta_0$, indicated by $\boxtimes$, are $a_{\alpha_0+\ell_i,\beta_0}$, with $1\le\ell_1<\ell_2<\cdots$.  We denote $\sigma=m_2-m_1$ and $\rho=\ell_2-\ell_1$. For example, in Diagram~\ref{diagram:0127222}, $\sigma=6$ and $\rho=r=4$.

\medskip
\begin{diagram}\label{diagram:0127222}

\begin{tiny}
\begin{center}
\begin{tabular}{| r | c |cc c c  c c c c c | c c c c c c c c c  c  cccc cc|}\hline
&& & & & & & & &  && $m_1$  & &&    &&  & $m_2$ & & &  &    & & &  &&   \\
&&0 &1&2 &  3& 4 & 5& 6& 7& 8& 9 &10 & 11& 12 & 13& 14& 15 & 16&17 & 18& 19& 20 & 21 & 22&23 &24 \\
&& & & & & $r$   & & & & & $p$ & & & $q$  & & & & & & & &  & &  &&\\\hline
0&&$\circ$ & $\circ$&$\circ$ &$\circ$ &$\circ$ & $\circ$&$\circ$ & $\circ$& $\circ$& $\blacktriangle$ &$\circ$ & $\circ$& $\circ$&$\circ$ &$\circ$ & $\blacktriangle$ & & & & & & $\blacktriangle$ & & &  \\
1&& $\circ$ &$\circ$ & & & & & & & & $\circ$& & & & & & & & & & & && & &  \\
2&&$\circ$ &&$\circ$  & & & & & & & $\circ$& & & & & & & && & & && &  &\\
3& &$\circ$ & & & $\circ$& & & & & & $\circ$& & & & & & & & & & && & &  &\\\hline
4&$\ell_1=r$ & $\boxtimes$ &$\circledcirc$ &$\circledcirc$ & $\circledcirc$& $\square$ & & & & $\square$ & $\circ$& & &  $\square$ & &&  & $\square$ & & & &$\square$ & & && $\square$\\
5&&$\circ$ & $\circleddash$& & & & $\circ$& & & &$\circ$ & & & & & & & & & & & && & & \\
6& & $\circ$& & $\circleddash$& & & & $\circ$& & & $\blacktriangle$ & & & & & & $\blacktriangle$ & & & & & & $\blacktriangle$ & & &  \\
7&& $\circ$& & & $\circleddash$  & &&&$\circ$&& & & & & & & & & & & & && &  &\\
8&$\ell_2$ & $\boxtimes$ & & & & $\square$ & & & & $\square$ &  && & $\square$ & &  & 
  &$\square$ && & &$\square$ && & & $\square$\\
9&$p$ & & & & & & & && & $\circ$& &  & & & & & & & & & &&& &  \\
10&& & & & & & & & &  & &$\circ$ & & && & & & & & & && & & \\
11&& & & & & & & & & & & & $\circ$& & & & & & & & & && & & \\
 12&$q$ & $\boxtimes$ & & & & $\square$ & & & & $\square$ &$\blacktriangle$ &  && $\square\bullet$ &  &  & $\blacktriangle$& $\square$& & && $\square$&$\blacktriangle$  &&& $\square$\\
 13& & & & & & & & &  &  & & & & &&&&& & & & && & & \\
14&& & & & & & & & & & & & & & & & & & & & & && &  &\\
15&& & & & & & &  &  & & & &  & & & &&&& && & & &  &\\
 16&& $\boxtimes$ & & & & $\square$ & & & & $\square$ & &&  & $\square$&
 &    &&$\square$ & & & &$\square$ &&& & $\square$\\
 17&& & & & & & & & & & & & & & & & & & & & & && &&  \\
  18&&  & & & &  & & & & &$\blacktriangle$ &  &  & &  &  & $\blacktriangle$& & & && &$\blacktriangle$  & &&\\
 19&& & & & & & & & & & & & & & & & & & & & & && & & \\
20& & $\boxtimes$ & & & & $\square$ & & & & $\square$ & & & & $\square$ & &&  & $\square$ & & & &$\square$  & &&& $\square$\\
21&& & & & & & &  & & & & &  & & & &&&&  && & & &&  \\
22& &  & & & &  & & & &  & &  &  & &  & & & & & && & && &\\
23& & & & & & & & & & & & & & & & & & & & & & && &  &\\
24& & $\boxtimes$ & & & & $\square$ & & & & $\square$ &$\blacktriangle$ &&  & $\square$  &  & & $\blacktriangle$&$\square$  &&& &$\square$  &$\blacktriangle$  &&& $\square$\\
25& &  & & & &  & & & &  & &  &  & &  & & & & & && & && &\\\hline

\end{tabular}
\end{center}
 \end{tiny}
 \end{diagram}
\medskip

We consider first $K_{r,0}$. By Diagram~\ref{diagram:1022212}, the points $(r,i)$, $1<i<\rho$, indicated by $\circledcirc$, do not belong to $K$. By Diagram~\ref{diagram:1022214}, $K\supset K_{r,0}^\rho$.
The semiheap $K\cap K_{r,0}$ falls into case 3.3.3, more precisely, either cases (7), (12) or (13), but case (7) does not occur.
In case (13),  the points $(r+j,j)$, $1\le j<\rho$, indicated by $\circleddash$,  do not belong to $K$, so by Proposition~\ref{prop:1211211}, $K\cap K_{r,0}=K_{r,0}^\rho$, and therefore in this case,
\[
K_1:=K_{r,0}^\rho\cap K_{r,p}=\{(\alpha_0+r+\ell\rho, \beta_0+m\rho: \ell\in\IN_0,m\rho\ge p    \}.
\].

By the same argument applied to  $K_{0,p}$, assuming that $K\cap K_{0,p}$ is also in case (13), we have 
\[
K_2:=K_{0,p}^\sigma\cap K_{r,p}=\{(\alpha_0+\ell'\sigma, \beta_0+p+m'\sigma):
\ell'\sigma\ge r, m'\in\IN_0    \}.
\]
$K_1$ is depicted by the  symbols $\square$ in $K_{r,p}$ and $K_2$ is depicted by  the symbols $\blacktriangle$ in $K_{r,p}$, and we must have $K_1=K\cap K_{r,p}=K_2$. 

As suggested by the diagram, we now show that  $\sigma=\rho$, and that $p$ and $r$  are divisible by $\sigma$. 
\begin{itemize}
\item Taking $m'=0$ and $\ell'\sigma\ge r$, $(\ell'\sigma,p)\in K_2$ so that $(\ell'\sigma,p)=(r+\ell\rho,m\rho)$ for some $\ell,m\in\IN_0$ with $m\rho\ge p$. Therefore $p=m\rho$ and $r=\ell'\sigma-\ell\rho$.
\item Taking $\ell=0$ and $m\rho\ge p$, $(r,m\rho)\in K_1$ so that $(r,m\rho)=(\ell'\sigma,p+m'\rho)$ for some $\ell',m'\in\IN_0$ with $\ell'\sigma\ge r$. Therefore $r=\ell'\sigma$ and $m\rho=p+m'\sigma$.

Thus $p$ is divisible by $\rho$, say $p=m_0\rho$, and $r$ is  divisible by $\sigma$, say $r=\ell_0\sigma$.

\item Taking $\ell=0$ and $(m_0+1)\rho=p+\rho>p$, 
$(r,(m_0+1)\rho)\in K_1$ so that 
$(r,(m_0+1)\rho)=(\ell''\sigma,p+m''\rho)$ for some $\ell'',m''\in\IN_0$ with $\ell''\sigma\ge r$. Therefore $r=\ell''\sigma$ and $(m_0+1)\rho=p+m''\sigma$. So $p+\rho=p+m''\sigma$, and $\rho=m''\sigma$.
\item Taking $m'=0$ and $(\ell_0+1)\sigma=r+\sigma> r$,
 $((\ell_0+1)\sigma,p)\in K_2$ so that $((\ell_0+1)\sigma,p)=(r+\ell\rho,m\rho)$ for some $\ell,m\in\IN_0$ with $m\rho\ge p$. Therefore $r+\sigma=r+\ell\rho$, so that $\sigma=\ell\rho$

Thus $\rho$ is divisible by $\sigma$ and $\sigma$ is divisible by $\rho$, hence $\sigma=\rho$.
\end{itemize}

Since $p$ and $r$ are each a multiple of $\rho$, it follows that  $(r,p)\in K$, so that $(0,p)(r,p)^*(r,0)=(0,0)\in K$, which is a contradiction.
We conclude that if both semiheaps $K\cap K_{r,0}$ and $K\cap K_{0,p}$ are in case (13), then case (1) does not occur.  

It remains to show that case (1) does not occur in the three other possible cases, namely,
\begin{itemize}
\item $K\cap K_{r,0}$ is in case (12) and $K\cap K_{0,p}$ is in case (13)
\item $K\cap K_{r,0}$ is in case (13) and $K\cap K_{0,p}$ is in case (12)
\item $K\cap K_{r,0}$ is in case (12) and $K\cap K_{0,p}$ is in case (12)
\end{itemize}

Let us now suppose that $K\cap K_{r,0}$ is in case (12),  and $K\cap K_{0,p}$ is in case (13) and refer to Diagram~\ref{diagram:0127223}.  Recall that the points of $K$ on the row determined by $\alpha_0$, indicated by $\blacktriangle$, are $a_{\alpha_0,\beta_0+m_i}$, with $1\le m_1<m_2< \cdots$, and the points  on the column determined by $\beta_0$, indicated by $\boxtimes$, are $a_{\alpha_0+\ell_i,\beta_0}$, with $1\le\ell_1<\ell_2<\cdots$.  We denote $\sigma=m_2-m_1$ and $\rho=\ell_2-\ell_1$. For example, in Diagram~\ref{diagram:0127223}, $\sigma=6$ and $\rho=r=4$.

Since $K\cap K_{r,0}$ is assumed in case (12), by Proposition~\ref{prop:1213211},
there exist $0=j_0<1\le j_1<j_2<\cdots<j_n<\rho$ such that
 \[
K\cap K_{r,0}=\bigcup_{i=0}^n K_{r+j_i,j_i}^\rho,
\]
and therefore in this case,
\[
K_1:=K_{r,0}^\rho\cap K_{r,p}=\bigcup_{i=0}^n\left(K_{r+j_i,j_i}^\rho\cap K_{r,p}\right)
=\bigcup_{i=0}^n\{(r+j_i+\ell\rho,j_i+m\rho):\ell,m\in\IN_0, j_i+m\rho\ge p \}.
\]
In Diagram~\ref{diagram:0127223}, we indicate the points of $K_{r+j_1,j_1}^\rho=K_{r+3,3}^4$ with the symbols $\heartsuit$, and the points of 
$K_{r+j_2,j_2}^\rho=K_{r+6,6}^4$ with the symbols $\oplus$.

As before, assumng that $K\cap K_{0,p}$ is in case (13), we have 
\[
K_2:=K_{0,p}^\sigma\cap K_{r,p}=\{(\alpha_0+\ell'\sigma, \beta_0+p+m'\sigma):
\ell'\sigma\ge r, m'\in\IN_0    \}
\]

In Diagram~\ref{diagram:0127223} below,
$K_1$ is depicted by the  symbols $\square,\heartsuit,\oplus$ in $K_{r,p}$ and $K_2$ is depicted by  the symbols $\blacktriangle$ in $K_{r,p}$, and we must have $K_1=K\cap K_{r,p}=K_2$. 

\begin{diagram}\label{diagram:0127223}
\begin{tiny}
\begin{center}
\begin{tabular}{| r | c |cc c c  c c c c c | c c c c c c c c c  c  ccccc cc|}\hline
&& & &&$j_1$  & & & $j_2$&  && $m_1$  & &&    &&  & $m_2$ & & &  &    & & &  &&  & \\
&&0 &1&2 &  3& 4 & 5& 6& 7& 8& 9 &10 & 11& 12 & 13& 14& 15 & 16&17 & 18& 19& 20 & 21 & 22&23 & 24& 25\\
&& & & & & $r$   & & & & & $p$ & & & $q$  & & & & & & & & & & & & & \\\hline
0&&$\circ$ & $\circ$&$\circ$ &$\circ$ &$\circ$ & $\circ$&$\circ$ & $\circ$& $\circ$& $\blacktriangle$ &$\circ$ & $\circ$& $\circ$&$\circ$ &$\circ$ & $\blacktriangle$ & & & & & & $\blacktriangle$ & & & & \\
1&& $\circ$ &$\circ$ & & & & & & & & $\circ$& & & & & & & & & & & && & & &\\
2&&$\circ$ &&$\circ$  & & & & & & & $\circ$& & & & & & & && & & && & &&\\
3& &$\circ$ & & & $\circ$& & & & & & $\circ$& & & & & & & & & & && & & & &\\\hline
4&$\ell_1=r$ & $\boxtimes$ &&&& $\square$ & & & & $\square$ & $\circ$&  &   & $\square$&  & & & $\square$ & & & &$\square$ && &&$\square$&\\
5&&$\circ$ & & & & & $\circ$& & & &$\circ$ & & & & & & & & & & & && & && \\
6& & $\circ$& & & & & & $\circ$& & & $\blacktriangle$ & & & & & & $\blacktriangle$ & & & & & & $\blacktriangle$ & & & & \\
7&$r+j_1$& $\circ$& & &  $\heartsuit$ & &&&$\heartsuit\circ$&&& &$\heartsuit$ & & & & $\heartsuit$& & & &$\heartsuit$ & & &&$\heartsuit$ & & \\
8&$\ell_2$ & $\boxtimes$ & & & & $\square$ & & & & $\square$ & &  & & $\square$   & 
  & && $\square$& &&& $\square$ & & &&$\square$&\\
  9&$p$ & & & & & & & && & $\circ$& &  & & & & & & & & & &&& &  &\\
10&$r+j_2$& & & & & & &$\oplus$ & &  & &$\oplus\circ$ & & && $\oplus$& & & &$\oplus$ & & && $\oplus$& & &\\
11&& && &$\heartsuit$ & & & & $\heartsuit$& &  & & $\heartsuit\circ$& && & $\heartsuit$& & & & $\heartsuit$& & &&$\heartsuit$ & & \\
 12&$q$ & $\boxtimes$ & & & & $\square$ & & & & $\square$ &$\blacktriangle$ &  & &$\square\bullet$ &  &  & $\blacktriangle$& $\square$& &&&$\square$  &$\blacktriangle$  & &&$\square$&\\
 13& & & & & & & & &  &  & & & & &&&&& & & & && & & &\\
14&& & & & & & & $\oplus$& & & & $\oplus$& & & & $\oplus$& & & &$\oplus$ & & &&$\oplus$ &  &&\\
15&& & && $\heartsuit$& & & &$\heartsuit$  &  & & & $\heartsuit$&  & & & $\heartsuit$&&&&$\heartsuit$ && & & $\heartsuit$&  &\\
 16&& $\boxtimes$ & & & & $\square$ & & & & $\square$ & &  & 
 &  $\square$  && & &$\square$ & & &&$\square$& & &&$\square$&\\
17&& & & & & & & & & & & & & & & & & & & & & && && & \\
  18&&  & & & &  & &$\oplus$ & & &$\blacktriangle$ & $\oplus$ &  & &  & $\oplus$ & $\blacktriangle$& & & $\oplus$&& &$\blacktriangle$  & $\oplus$&&&\\
 19&& && &$\heartsuit$ & & & & $\heartsuit$& & & &$\heartsuit$ & & & & $\heartsuit$& & & & $\heartsuit$& & && $\heartsuit$& & \\
20&& $\boxtimes$ & & & & $\square$ & & & & $\square$ & &  & 
 &  $\square$  && & &$\square$ & & &&$\square$& & &&$\square$&\\
21&& & & & & & &  & & & & &  & & & &&&&  && & & && & \\
22& &  & & & &  & & $\oplus$& &  & & $\oplus$ &  & &  &$\oplus$ & & & &$\oplus$ && & &$\oplus$& &&\\
23& && & & $\heartsuit$& & & & $\heartsuit$& & & &$\heartsuit$ & & & & $\heartsuit$& & & &$\heartsuit$ & & && $\heartsuit$&  &\\
24& & $\boxtimes$ & & & & $\square$ & & & & $\square$ &$\blacktriangle$ &  &  & $\square$&  & & $\blacktriangle$&$\square$  & & && $\square$&$\blacktriangle$  & &&$\square$&\\
25&& & &  & & && & && & &   && & &  & & & & &&& & & &\\\hline
\end{tabular}
\end{center}
 \end{tiny}
 \end{diagram}
\medskip

We show first that $\rho=\sigma$.
Since $K_1\subset K_2$, for  $\ell,m,i\in\IN_0$, with $j_i+m\rho\ge p$, there exist $\ell',m'\in\IN_0$ with $\ell'\sigma\ge r$ and 
\begin{equation}\label{eq:1231213}
(r+j_i+\ell\rho,j_i+m\rho)=(\ell'\sigma,p+m'\sigma).
\end{equation}

Fix $i$ such that $j_i\ge p$.  Then  
 for  all $\ell,m\in\IN_0$,  there exist $\ell',m'\in\IN_0$ with $\ell'\sigma\ge r$ such that 
\[
r+j_i+\ell\rho=\ell'\rho\hbox{ and }j_i+m\rho=p+m'\sigma.
\]
Eliminating $j_i$ from these two equations results in 
\begin{equation}\label{eq:1231211}
r+p=(m-\ell)\rho+(\ell'-m')\sigma,
 \end{equation}
 with $(\ell',m')$ depending on $(\ell,m)$ and satisfying $\ell'\sigma\ge r$.
 
Since $K_2\subset K_1$, for  $\ell,m\in\IN_0$, with $\ell\sigma\ge r$, there exist $\ell',m',i\in\IN_0$ with $j_i+m'\rho\ge p$ such that 
 \[
r+j_i+\ell'\rho=\ell\sigma\hbox{ and }j_i+m'\rho=p+m\sigma.
\]
Eliminating $j_i$ from these two equations results in 
\begin{equation}\label{eq:1231212}
r+p=(m'-\ell')\rho+(\ell-m)\sigma,
 \end{equation}
 with $(\ell',m')$ depending on $(\ell,m)$, provided $\ell\sigma\ge r$.
 
  With $\ell\ge 0$ and $m\ge 0$, from (\ref{eq:1231211}), there exist $\ell_1,m_1$ such that 
 \[
 r+p=(m-\ell)\rho+(\ell_1-m_1)\sigma
 \]
and there exist $\ell_2,m_2$ such that 
 \[
 r+p=(m+1-\ell)\rho+(\ell_2-M_2)\sigma,
 \]
 so by subtraction, $0=\rho+[(\ell_2-m_2)+(\ell_1-m_1)]\sigma$ and $\sigma$ divides $\rho$.

 With $\ell\sigma\ge r$ and $m\ge 0$, from (\ref{eq:1231212}), there exist $\ell_3,m_3$ such that 
 \[
 r+p=(m_3-\ell_3)\rho+(\ell-m)\sigma
 \]
and there exist $\ell_4,m_4$ such that 
 \[
 r+p=(m_4-\ell_4)\rho+(\ell+1-m)\sigma,
 \]
 so by subtraction, $0=[(m_4-\ell_4)-(m_3-\ell_3)]\rho+\sigma$ and $\rho$ divides $\sigma$.

Hence $\rho=\sigma$ and from (\ref{eq:1231211}) or (\ref{eq:1231212}), $\rho$ divides $r+p$. Now, taking $i=0,\ell=0$ in (\ref{eq:1231213}), $r=\ell'\sigma$, so that also $\rho$ divides $p$. Hence, as in the previous case, $(r,p)\in K$, so that $(0,p)(r,p)^*(r,0)=(0,0)\in K$, which is a contradiction.
We conclude that if the semiheap $K\cap K_{r,0}$ is in case (12) and the semiheap $K\cap K_{0,p}$ is in case (13), then case (1) does not occur.  

 Since the adjoint mapping is an anti-isomorphism of the extended bicyclic semigroup (See Remark~\ref{rem:0315221}), it follows that if the semiheap $K\cap K_{r,0}$ is in case (13) and the semiheap $K\cap K_{0,p}$ is in case (12), then case (1) does not occur. 
 
 It remains to consider the case when both semiheaps $K\cap K_{r,0}$ and 
 $K\cap K_{0,p}$ are in case (12). After this, again since the adjoint mapping is an anti-isomorphism, and case (1) has been shown to not occur , it will follow that case (2) also does not occur, so we will have the following lemma.
 
 \begin{lemma}\label{lem:1230211}
Cases (1) and (2) with (necessarily) $(0,0)\not\in K$, do not occur.
\end{lemma}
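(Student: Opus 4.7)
The plan is to dispose of the one remaining subcase of case~(1)---both $K\cap K_{r,0}$ and $K\cap K_{0,p}$ in case (12)---and then obtain case~(2) from the adjoint anti-isomorphism of Remark~\ref{rem:0315221}. Under $a_{ij}\mapsto a_{ji}$, the data $(r,p,q)$ of $K$ are sent to $(p,r,q)$ for $K^*$, so the relation $p<r<q$ of case~(2) for $K$ becomes the relation $r<p<q$ of case~(1) for $K^*$, while $(0,0)\notin K$ iff $(0,0)\notin K^*$. Thus it suffices to rule out case~(1).

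Assume both $K\cap K_{r,0}$ and $K\cap K_{0,p}$ are in case~(12). By Proposition~\ref{prop:1213211} we may write
$$K\cap K_{r,0}=\bigcup_{i=0}^{n}K_{r+j_i,j_i}^{\rho},\qquad K\cap K_{0,p}=\bigcup_{k=0}^{n'}K_{i_k,p+i_k}^{\sigma},$$
with $0=j_0<j_1<\cdots<j_n<\rho$ and $0=i_0<i_1<\cdots<i_{n'}<\sigma$. The given diagonal point $(q,q)\in K$ lies in $K\cap K_{r,0}$, hence $(q,q)=(r+j_i+\ell\rho,\,j_i+m\rho)$ for some $i,\ell,m\in\IN_0$; subtracting the two coordinates collapses the $j_i$ term and yields $r=(m-\ell)\rho$, so $\rho\mid r$.

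Now Lemma~\ref{lem:0925211}(iii) applied to $a_{r,0}\,a_{0,p}^{*}\,a_{r,0}$ produces $(2r+p,0)\in K$. From the decomposition, the only contribution to the column $\beta_0$ comes from the strip $j_0=0$, so that column consists exactly of $\{(r+\ell\rho,0):\ell\in\IN_0\}$; thus $2r+p=r+\ell\rho$ for some $\ell$, giving $\rho\mid r+p$ and therefore $\rho\mid p$. Hence $(r,p)=(r+0\cdot\rho,\,(p/\rho)\rho)$ belongs to $K_{r,0}^\rho\subseteq K$. A final application of Lemma~\ref{lem:0925211}(i) gives
$$a_{0,p}\,a_{r,p}^{*}\,a_{r,0}=a_{0,\,0+r+p-p-r}=a_{0,0}\in K,$$
contradicting $(0,0)\notin K$. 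The main bookkeeping point is just to confirm that $(2r+p,0)$ must lie in the column~$\beta_0$ strip rather than in some shifted $j_i$-strip, which is automatic because only $j_i=0$ hits second coordinate zero; once this is in hand, there is no need to compare $\sigma$ and $\rho$ or run the full Diophantine matching of $K_1=K_2$ that was used in the earlier (12)/(13) subcase.
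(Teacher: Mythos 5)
Your proof is correct, and for the heart of the lemma (the subcase where both $K\cap K_{r,0}$ and $K\cap K_{0,p}$ are in case (12)) it takes a genuinely shorter route than the paper. The paper sets $K_1=K\cap K_{r,0}\cap K_{r,p}$ and $K_2=K\cap K_{0,p}\cap K_{r,p}$, matches the two case-(12) decompositions via the identity $K_1=K_2$, solves the resulting Diophantine relations to get $\rho=\sigma$ and $\sigma\mid(p+r)$, and only then uses the diagonal point $(q,q)$ to extract $\sigma\mid p$ and $\sigma\mid r$ before reaching the same contradiction $(0,0)=(0,p)(r,p)^*(r,0)\in K$. You bypass the entire $K_1=K_2$ matching: writing $(q,q)=(r+j_i+\ell\rho,\,j_i+m\rho)$ and subtracting coordinates gives $\rho\mid r$ in one line, and the triple product $a_{r,0}a_{0,p}^*a_{r,0}=a_{2r+p,0}$ (Lemma~\ref{lem:0925211}(iii)) must land in the column $\beta_0$, which the decomposition forces to be $\{(r+\ell\rho,0):\ell\in\IN_0\}$ since only the $j_0=0$ strip meets second coordinate zero; hence $\rho\mid(r+p)$ and so $\rho\mid p$, giving $(r,p)\in K_{r,0}^{\rho}\subseteq K$ and the contradiction. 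Both steps check out against Lemma~\ref{lem:0925211}, and your reduction of case (2) to case (1) via the adjoint anti-isomorphism of Remark~\ref{rem:0315221} is exactly the paper's. What your argument buys is economy: you never need $\sigma$, the decomposition of $K\cap K_{0,p}$, or the comparison $\rho=\sigma$; indeed, since you only use that $K\cap K_{r,0}$ admits a periodic decomposition of the form given by Propositions~\ref{prop:1213211} or~\ref{prop:1211211}, the same computation would also dispose of the (12)/(13) and (13)/(13) subcases that the paper treats separately before the lemma. What it loses, relative to the paper, is the incidental structural information that $\rho=\sigma$, which is not needed for the lemma's conclusion.
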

 \begin{proof}
 It suffices to show that if both semiheaps $K\cap K_{r,0}$ and 
 $K\cap K_{0,p}$ are in case (12), then $(0,0)\in K$ and therefore case (1) does not occur. We have 
 \[
 K_1:=K\cap K_{r,0}\cap K_{r,p}=\bigcup_{i=0}^n\{(r+j_i+\ell\rho,j_i+m\rho): \ell,m\in\IN_0, j_i+m\rho\ge p\}
 \]
and
\[
 K_2:=K\cap K_{0,p}\cap K_{r,p}=\bigcup_{i=0}^{n'}\{(k_i+\ell\sigma,p+k_i+m\sigma):\ell,m\in\IN_0, k_i+\ell\sigma\ge r\},
 \]
where $0=k_0<1\le k_1<k_2<\cdots<k_{n'}<\sigma$.

Since $K_1\subset K_2$, for $i,\ell,m\in\IN_0$ with $j_i+m\rho\ge p$, there exist $i',\ell',m'\in\IN_0$ satisfying $k_{i'}+\ell'\sigma\ge r$, such that 
\[
(r+j_i+\ell\rho,j_i+m\rho)=(k_{i'}+\ell'\sigma,p+k_{i'}+m'\sigma)
\]
so that
\[
r+j_i+\ell\rho=k_{i'}+\ell'\sigma\hbox{ and }      j_i+m\rho=p+k_{i'}+m'\sigma
\]
Fix $i$ such that $j_i\ge p$. Then  for every $\ell,m\in\IN_0$,  by subtraction, we have
\begin{equation}\label{eq:0101211}
r+p=(m-\ell)\rho+(\ell'-m')\sigma
\end{equation}
with $\ell',m'$ depending only on $\ell,m\in\IN_0$ (and $\ell'$ satisfying $k_{i'}+\ell'\sigma\ge r$ for some $i'$). \smallskip

Since $K_2\subset K_1$, for $i,\ell,m\in\IN_0$ with $k_i+\ell\sigma\ge r$, there exist $i',\ell',m'\in\IN_0$ satisfying $j_{i'}+m'\rho\ge p$, such that 
\[
(k_i+\ell\sigma, p+k_i+m\sigma)=(r+j_{i'}+\ell'\rho,j_{i'}+m'\rho)
\]
so that
\[
k_i+\ell\sigma=r+j_{i'}+\ell'\rho\hbox{ and }    p+k_i+m\sigma=j_{i'}+m'\rho\]

Fix $i$ such that $k_i\ge r$. Then  for every $\ell,m\in\IN_0$,  by subtraction, we have
\begin{equation}\label{eq:0101212}
r+p=(\ell-m)\sigma+(m'-\ell')\rho
\end{equation}
with $\ell',m'$ depending only on $\ell,m\in\IN_0$ (and $m'$ satisfying $j_{i'}+m'\rho\ge p$ for some $i'$). \smallskip

  With $\ell\ge 0$ and $m\ge 0$, from (\ref{eq:0101211}), there exist $\ell_1,m_1$ such that 
 \[
 r+p=(m-\ell)\rho+(\ell_1-m_1)\sigma
 \]
and there exist $\ell_2,m_2$ such that 
 \[
 r+p=(m+1-\ell)\rho+(\ell_2-M_2)\sigma,
 \]
 so by subtraction, $0=\rho+[(\ell_2-m_2)+(\ell_1-m_1)]\sigma$ and $\sigma$ divides $\rho$.\smallskip

  With $\ell\ge 0$ and $m\ge 0$, from (\ref{eq:0101212}), there exist $\ell_1,m_1$ such that 
 \[
 r+p=(\ell-m)\sigma+(m_1-\ell_1)\rho
 \]
and there exist $\ell_2,m_2$ such that 
 \[
 r+p=(\ell+1-m)\sigma+(m_2-\ell_2)\rho,
 \]
 so by subtraction, $0=\sigma+[(m_2-\ell_2)+(m_1-\ell_1)]\rho$ and $\rho$ divides $\sigma$.

Hence $\rho=\sigma$ and from (\ref{eq:0101211}) or (\ref{eq:0101212}),
$\sigma$ divides $p+r$. In fact, $\sigma$ divides both $p$ and $r$. Indeed, since $(q,q)\in K_1$ and $K_1=K_2$, there exist $\ell,m,i$ and $\ell',m',i'$, such that 
\[
(q,q)=(r+j_i+\ell\sigma,j_i+\sigma) = (k_{i'}+\ell'\sigma,p+k_{i'}+m'\sigma),
\]
so that $r+\ell\sigma=m\sigma$ and $p+m'\sigma=\ell'\sigma$. We now have $(r,p)\in K$, so that $(0,0)=(0,p)(r,p)^*(r,0)\in K$, a contradiction.
\end{proof}

We summarize the results of Lemma~\ref{lem:1112211} to Lemma~\ref{lem:1230211} in the following proposition.
\begin{proposition}\label{prop:0102221}
If the semiheap $K$ is in case 3.3.3, then either  $K=K_{\alpha_0,\beta_0}^p$ for some $p>0$, or
there exists $p>0$ such that
\[
K= \bigcup_{i=0}^n K_{q_i,q_i}^p.
\]
where $a_{\alpha_0+q_i,\beta_0+q_i}$, $0\le i<\infty$, are  the points of $K$ lying on the diagonal, such that
\[
q=q_0<q_1<q_2<\cdots<q_n<p\quad \hbox{ and }\quad p<q_{n+1}<q_{n+2}<\cdots.
\]
\end{proposition}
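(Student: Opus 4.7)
The plan is to assemble the classification as a bookkeeping exercise once all the earlier case-work is in place. By the hypothesis of case 3.3.3, the three minima
\[
p=\min\{k\ge 1:a_{\alpha_0,\beta_0+k}\in K\},\quad r=\min\{k\ge 1:a_{\alpha_0+k,\beta_0}\in K\},\quad q=\min\{k\ge 1:a_{\alpha_0+k,\beta_0+k}\in K\}
\]
are finite (they exist because $\overline{\beta}=\overline{\alpha}=\overline{\gamma}=\infty$), so the triple $(p,q,r)$ falls into exactly one of the thirteen ordering relations enumerated before Lemma~\ref{lem:1112211}.

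First I would invoke Lemma~\ref{lem:1112211} to discard cases (3) through (11), and to reduce the remaining candidates to: cases (1) and (2) with $a_{\alpha_0,\beta_0}\notin K$; case (12) with $a_{\alpha_0,\beta_0}\in K$; and case (13), which automatically forces $a_{\alpha_0,\beta_0}\in K$. Next I would apply Lemma~\ref{lem:1230211} to eliminate the remaining possibilities of cases (1) and (2). This leaves only cases (12) and (13), both with $a_{\alpha_0,\beta_0}\in K$.

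Finally I would plug in the structural descriptions already established: in case (13) (where $r=p=q$), Proposition~\ref{prop:1211211} yields the first alternative $K=K_{\alpha_0,\beta_0}^p$; in case (12) (where $q<r=p$), Proposition~\ref{prop:1213211} yields the second alternative $K=\bigcup_{i=0}^{n}K_{q_i,q_i}^p$, where the $q_i$ are precisely the diagonal indices of $K$ lying below $p$, and the diagonal points $q_{n+1}<q_{n+2}<\cdots$ above $p$ are absorbed into the finite union since each $q_{n+j}$ is congruent modulo $p$ to some $q_i$ with $i\le n$ (this is the only nontrivial content of the second alternative, and it is exactly what is verified at the end of the proof of Proposition~\ref{prop:1213211}).

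The only substantive obstacle is verifying that the two alternatives in the statement truly cover both remaining cases with the same parameter $p$: in case (13) the triple collapses ($r=p=q$) so $K_{\alpha_0,\beta_0}^p$ matches; in case (12) the integer $p$ in the statement is the common value $r=p$ of the two minimal coordinates, and one must check that no diagonal index $q_i$ equals $0$ apart from $q_0$, which is transparent from the definition $q=q_0\ge 1$. All remaining content is a direct restatement.
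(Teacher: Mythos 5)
Your proposal is correct and follows essentially the same route as the paper: the paper's own proof is precisely the three-step citation you describe (Lemma~\ref{lem:1112211} eliminates cases (3)--(11), Lemma~\ref{lem:1230211} eliminates cases (1) and (2), and Propositions~\ref{prop:1213211} and~\ref{prop:1211211} supply the two alternatives for cases (12) and (13)). The additional bookkeeping you supply about matching the parameter $p$ across the two alternatives is consistent with what those propositions establish.
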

\begin{proof}
By Lemma~\ref{lem:1230211}, cases (1) and (2) do not occur.  
By Lemma~\ref{lem:1112211}, cases (3)-(11) do not occur. Cases (12) and (13) are described in Propositions~\ref{prop:1213211} and \ref{prop:1211211}.
\end{proof}

The following theorem is the main result of this paper, listing all of the subsemiheaps of the extended bicyclic semigroup.
\begin{theorem}\label{thm:0211221}
The subsemiheaps of the extended bicyclic semigroup are  the inductive limits (see Remark~\ref{rem:0308221}) of sequences of  the following semiheaps $K$:
\begin{itemize}
\item $K$ is a single point $\{a_{pq}\}$ (Lemmal~\ref{lem:1225211})
\item $K=
\{a_{\alpha_0,\beta_0}\}\cup\{a_{\alpha_0+k_j,\beta_0+k_j}:1\le j\le n_0\}$,
 where  $1\le k_1<k_2<\cdots<k_{n_0}$ and  $a_{\alpha_0,\beta_0}$ and $a_{\alpha_0+k_j,\beta_0+k_j}$  are the only elements of $K$ that are in the diagonal. (Proposition~\ref{prop:1225211})
 \item $K=\{a_{\alpha_0\beta_0}\}\cup\{a_{\alpha_0+k_j,\beta_0+k_j}:j\in\IN\}$ (Proposition~\ref{prop:1225212})
 \item
 There exist $\sigma,\ell_0\in\IN$, such that 
\[
  K=\{a_{\alpha_0\beta_0}\}\cup\{a_{\alpha_0+k_j,\beta_0+k_j}:j=1,\ldots \ell_0-1\}\cup K_{\alpha_0+k_{\ell_0},\beta_0+k_{\ell_0}}^\sigma
  \]
  or
\[  K=\{a_{\alpha_0\beta_0}\}\cup\{a_{\alpha_0+k_i,\beta_0+k_i}:1\le i<\ell_0\}\cup 
 \left(\bigcup_{i=\ell_0}^{j} K^{\sigma}_{\alpha_0+k_i,\beta_0+k_i}\right)
    \]
 where $1\le k_1<k_2<\cdots<k_n<\cdots <\infty$, $a_{\alpha_0,\beta_0}$ and $a_{\alpha_0+k_i,\beta_0+k_i}$  are the only elements of $K$ that are in the diagonal. (Proposition~\ref{prop:1225212})
\item
 $K=K_{\alpha_0,\beta_0}^p$ for some $p>0$ (Proposition~\ref{prop:0102221})
 \item There exists $p>0$ such that
\[
K= \bigcup_{i=0}^n K_{q_i,q_i}^p.
\]
where $a_{\alpha_0+q_i,\beta_0+q_i}$, $0\le i<\infty$, are  the points of $K$ lying on the diagonal, such that
\[
q=q_0<q_1<q_2<\cdots<q_n<p\quad \hbox{ and }\quad p<q_{n+1}<q_{n+2}<\cdots.
\]
(Proposition~\ref{prop:0102221})
\end{itemize}
\end{theorem}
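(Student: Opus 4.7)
The plan is to assemble the theorem from the results already established, rather than prove anything new from scratch. The strategy proceeds in two layers: an outer reduction to the ``quadrant'' case~(1) of Section~3, and an inner enumeration of the surviving subcases of the $3\times 3\times 3 = 27$ case split encoded in the classification table.

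First, I would invoke Remark~\ref{rem:0308221} to reduce the classification to case (1), where $\alpha_0,\beta_0\in\IZ$ are both finite. Indeed, if a subsemiheap $K$ falls into case (2), (3), or (4), the remark exhibits $K$ as an increasing union of the quadrant subsemiheaps $K\cap K_{\alpha,\beta}$ obtained by truncating $\alpha$ and/or $\beta$ from below to any integer value; since the connecting maps are the inclusion morphisms, this is exactly an inductive limit in the category of semiheaps and semiheap homomorphisms. Therefore, once every subsemiheap of case (1) is shown to be one of the six listed types, every subsemiheap of $E$ will be (at worst) an inductive limit of semiheaps from the list, which is what the theorem asserts.

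Next, I would walk through the Classification Scheme table. Of the 27 subcases, the  ``no'' column together with Lemmas~\ref{lem:1211211}, \ref{lem:1211212}, \ref{lem:1211213}, \ref{lem:1211214}, \ref{lem:1211215} and Proposition~\ref{prop:1217211} eliminate all but the five ``yes'' rows: namely~1.1.1, 1.1.2, 1.1.3, and 3.3.3 (whose feasible branch is the finite-diagonal-below-$p$ configuration). For each surviving case I would cite the precise description: case~1.1.1 yields the singleton $\{a_{pq}\}$ by Lemma~\ref{lem:1225211}; case~1.1.2 yields the finite diagonal semiheap by Proposition~\ref{prop:1225211}; case~1.1.3 yields either the infinite pure diagonal or one of the two mixed diagonal-plus-lattice configurations by Proposition~\ref{prop:1225212}; and case~3.3.3 yields either $K_{\alpha_0,\beta_0}^p$ or a finite union $\bigcup_{i=0}^n K_{q_i,q_i}^p$ by Proposition~\ref{prop:0102221}. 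A quick inspection shows these match the six bullets in the statement, noting that Proposition~\ref{prop:1225212} contributes two of the bullets and Proposition~\ref{prop:0102221} contributes two more.

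Since the proof is purely an assembly of previously established lemmas and propositions, the only real ``obstacle'' is bookkeeping: making sure that (i) every row marked ``no'' in the classification table has indeed been eliminated by the cited lemma, (ii) no subsemiheap slips through a crack between the outer reduction (Remark~\ref{rem:0308221}) and the inner quadrant classification, and (iii) the six bullets in the theorem statement together exhaust the surviving descriptions with no omission or duplication. Once these three checks are written out, the proof amounts to three lines: reduce to case (1) by Remark~\ref{rem:0308221}, invoke the table, and read off the six families. Accordingly I would present the proof as a brief paragraph containing exactly this outline, with the heavy lifting relegated to the cited results.
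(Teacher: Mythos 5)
Your proposal is correct and follows exactly the route the paper takes: Theorem~\ref{thm:0211221} is presented there as a summary whose proof is precisely the reduction to the quadrant case via Remark~\ref{rem:0308221} followed by reading off the four surviving rows (1.1.1, 1.1.2, 1.1.3, 3.3.3) of the classification table from Lemma~\ref{lem:1225211} and Propositions~\ref{prop:1225211}, \ref{prop:1225212}, and \ref{prop:0102221}. The only slip is cosmetic: you write ``five'' surviving rows but correctly list the four that are marked ``yes.''
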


\section{Injectivity of W*-TROs}

The notation for this section is the following.\smallskip

$\IN=1,2,\ldots$;
$\IN_0=\IN\cup\{0\}$;
$\IZ=\IN_0\cup -\IN$

$S$ is an inverse semigroup with generalized inverse $x^*$

$K$ is a subset of $S$ closed under the triple product $xy^*z$ (semiheap).  \smallskip

$\pi$ is the left regular representation of $S$ on $H:=\ell^2(S)$ so that $S$ is an orthonormal basis for $H$ and $\pi(x)$ is the partial isometry defined by 
$
\pi(x)y=xy 
$
if $yy^*\le x^*x$ and $\pi(x)y=0$ otherwise.\smallskip

$C^*_{\hbox{red}}(S)$ is the C*-algebra generated by $\{\pi(x):x\in S\}$ and is the norm closure of $\hbox{span}\, \pi(S)$. \smallskip

 $\hbox{TRO}(K)$ is the TRO generated by $\pi(K)$ and is the norm closure of $\hbox{span}\, \pi(K)$.  \smallskip
 
 $\hbox{VN}(S)$ is the von Neumann algebra generated by $\pi(S)$  and is the weak closure of $C^*_{\hbox{red}}(S)$ \smallskip
 
 $\hbox{VNTRO}(K)$ is the W*-TRO generated by $\pi(K)$ and is the weak closure of   $\hbox{TRO}(K)$.\smallskip

Details of the left regular representation are as follows (\cite[pp. 25--27]{Paterson1999}).  We have
$$
\pi(a_{ij})a_{pq}=\left\{ \begin{array}{cc} a_{ij}a_{pq}, & a_{pq}a_{pq}^*\le  a_{ij}^*a_{ij}\\
0,& \hbox{otherwise}
\end{array}
\right.,
$$
that is,
$$
\pi(a_{ij})a_{pq}=\left\{ \begin{array}{cc} a_{ij}a_{pq}, & a_{pp}\le  a_{jj}\\
0,& \hbox{otherwise}
\end{array}
\right.,
$$
or
$$
\pi(a_{ij})a_{pq}=\left\{ \begin{array}{cc} a_{ij}a_{pq}, & p\ge j\\
0,& \hbox{otherwise}
\end{array}
\right.,
$$
or
$$
\pi(a_{ij})a_{pq}=\left\{ \begin{array}{cc} a_{i+p-j,q}, & p\ge j\\
0,& \hbox{otherwise}
\end{array}
\right.
$$

Define provisionally a linear map $\Phi_0:\hbox{span}\, \pi(S)\rightarrow \hbox{span}\, \pi(K)$ as follows: $\Phi_0(0)=0$, and 
for $x_1,\ldots x_n\in S$,
\[
\Phi_0\left(\sum_{i=1}^n\lambda_i\pi(x_i)\right)=
\sum_{x_i\in K}\lambda_i\pi(x_i).
\]

\begin{proposition}\label{prop:0908211}
The idempotent map $\Phi_0$ is well-defined and contractive, and therefore extends to a contractive projection $\Phi$ on $C^*_{\hbox{red}}(S)$ with range $\hbox{TRO}(K)$.  Moreover, $\Phi$ extends to a completely contractive projection on $\hbox{VN}(S)$ with range $\hbox{VNTRO}(K)$.  Hence, if $\hbox{VN}(S)$ in an injective von Neumann algebra, then $\hbox{VNTRO}(K)$ is an injective operator space.
\end{proposition}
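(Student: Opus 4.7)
The strategy is to realise $\Phi_0$ as the restriction to $\mathrm{span}\,\pi(S)$ of a normal completely contractive idempotent $\Phi$ on $B(\ell^2(S))$, whose existence will be built from the explicit matrix description of the regular representation. Once such a $\Phi$ is in hand, the well-definedness and contractivity of $\Phi_0$ are automatic, and the injectivity assertion follows from the Effros--Ozawa--Ruan characterisation of injective operator spaces.

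First I would unpack the left regular representation in coordinates. Relative to the canonical orthonormal basis $\{\delta_y : y \in S\}$ of $\ell^2(S)$, each $\pi(x)$ is a $\{0,1\}$-matrix with
\[
\langle \pi(x)\delta_y,\delta_z\rangle = 1 \quad\Longleftrightarrow\quad x = zy^* \text{ and } yy^* \le x^*x,
\]
and zero otherwise. The natural candidate for $\Phi$ is then the Schur multiplier
\[
\langle \Phi(T)\delta_y,\delta_z\rangle \;=\; \chi(y,z)\,\langle T\delta_y,\delta_z\rangle,
\]
where $\chi(y,z)=1$ when $zy^*\in K$ together with the idempotent compatibility $yy^*\le(zy^*)^*(zy^*)$, and $\chi(y,z)=0$ otherwise. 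By design, $\Phi(\pi(x))=\pi(x)$ when $x\in K$ and $\Phi(\pi(x))=0$ when $x\in S\setminus K$, so $\Phi$ is idempotent, it is weak$^*$-continuous as Schur multipliers always are, and its restriction to $\mathrm{span}\,\pi(S)$ reproduces $\Phi_0$.

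The main technical step is to verify that $\Phi$ is completely contractive. The plan is to exhibit a factorisation $\chi(y,z)=\langle \xi_y,\eta_z\rangle_{\mathcal H}$ in an auxiliary Hilbert space $\mathcal H$ with $\sup_y\|\xi_y\|,\ \sup_z\|\eta_z\|\le 1$, which by the standard Stinespring-type criterion forces complete boundedness of the corresponding Schur multiplier with norm $\le 1$. The semiheap identity $[k_1k_2^*k_3]\in K$ enters essentially here: closure of $K$ under the ternary product is exactly what is needed to show that $\chi$ is compatible with the partial composition inherited from $\pi$, so that $\xi_y,\eta_z$ can be realised as indicator vectors in an $\ell^2$-space built from $K$, in the spirit of the slice maps used in \cite[Theorem 4.5.2]{Paterson1999} for inverse subsemigroups. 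I expect this factorisation to be the principal obstacle, since for a general subset of $S$ the truncated map may fail even to be bounded, let alone completely contractive.

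Granted the normal completely contractive projection $\Phi$, everything else follows routinely. Restriction to $C^*_{\mathrm{red}}(S)$ gives a completely contractive projection with range $\mathrm{TRO}(K)$, and restriction to $\mathrm{VN}(S)$ gives a normal completely contractive projection with range $\mathrm{VNTRO}(K)$. If $\mathrm{VN}(S)$ is injective, then $\mathrm{VNTRO}(K)$ is the image of a normal completely contractive projection from an injective von Neumann algebra, and \cite[Theorem 2.5]{EffOzaRua01} then yields that $\mathrm{VNTRO}(K)$ is an injective operator space.
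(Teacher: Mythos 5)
Your central construction does not work: the truncation map $\Phi_0$ is in general \emph{not} a Schur multiplier, so no factorisation $\chi(y,z)=\ipr{\xi_y,\eta_z}$ can exist. Two concrete problems. First, the matrix-coefficient formula is wrong: $\ipr{\pi(x)\delta_y,\delta_z}=1$ means $z=xy$ with $yy^*\le x^*x$, which gives $zy^*=xyy^*$, a \emph{restriction} of $x$ that is usually different from $x$. For example, in the bicyclic semigroup take $x=a_{00}$, $y=a_{11}$; then $z=a_{00}a_{11}=a_{11}$ and $zy^*=a_{11}\ne a_{00}$, so with $K=\{a_{00}\}$ (a legitimate subsemiheap, by Lemma~\ref{lem:1225211}) your $\chi$ would kill the diagonal entry of $\pi(a_{00})$ at position $(a_{11},a_{11})$, and the claim ``by design $\Phi(\pi(x))=\pi(x)$ for $x\in K$'' fails. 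Second, and independently of how $\chi$ is chosen, the supports of the $0$-$1$ matrices $\pi(x)$ for distinct $x$ are nested rather than disjoint: $\mathrm{supp}\,\pi(a_{i+k,j+k})\subsetneq\mathrm{supp}\,\pi(a_{ij})$ for $k>0$. Hence no entrywise multiplier can simultaneously fix $\pi(a_{00})$ and annihilate $\pi(a_{11})$, which is exactly what $\Phi_0$ must do for $K=\{a_{00}\}$. So the factorisation you flag as ``the principal obstacle'' is not merely hard; it is impossible, and the approach cannot be repaired within the Schur-multiplier framework. (A minor further symptom: if your normal completely contractive $\Phi$ existed on all of $B(\ell^2(S))$, injectivity of $\mathrm{VN}(S)$ would be superfluous, since $\mathrm{VNTRO}(K)$ would already be the range of a complete contraction defined on $B(H)$.)

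The paper's route is different and avoids complete contractivity at the level of $B(H)$ altogether. It proves only that $\Phi_0$ is \emph{contractive}, by a direct computation: expanding $\xi=\sum_{z\in S}(\xi,z)z$ one gets $\|\sum_{x_i\in K}\lambda_i\pi(x_i)\xi\|^2=\sum_{x_i\in K,\,zz^*\le x_i^*x_i}|\lambda_i(\xi,z)|^2$, which is dominated term by term by the corresponding sum over all $x_i\in S$; hence $\Phi_0$ extends to a contractive projection $\Phi$ of $C^*_{\hbox{red}}(S)$ onto $\hbox{TRO}(K)$. The upgrade to complete contractivity is then obtained for free from structure theory: $\Phi^{**}$ is a contractive projection on the von Neumann algebra $C^*_{\hbox{red}}(S)^{**}$ whose range is a W*-TRO (identified with $\hbox{VNTRO}(K)$ via \cite[Lemma]{LanRus83}), and \cite[Theorem 2.5]{EffOzaRua01} shows such a projection is automatically completely contractive. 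Restricting to $\mathrm{VN}(S)$ and composing with a completely contractive projection of $B(H)$ onto the injective algebra $\mathrm{VN}(S)$ gives injectivity of $\hbox{VNTRO}(K)$. You should adopt this two-step structure (elementary contractivity estimate, then the abstract automatic-complete-contractivity theorem), or find a genuinely different mechanism; the Schur multiplier idea is a dead end here.
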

\begin{proof}
Let $a=\left\| \sum_{i=1}^n\lambda_i\pi(x_i)   \right\|$ and $b=\left\| \sum_{x_i\in K}\lambda_i\pi(x_i)  \right\|$. With $\xi=\sum_{z\in S}(\xi,z)z\in\ell^2(S)$, $$\pi(x_i)\xi=\sum_{zz^*\le x_i^*x_i}(\xi,z)x_iz$$ so that
\[
b^2=\sup_{\|\xi\|\le1}\left\| \sum_{x_i\in K}\lambda_i\pi(x_i) \xi \right\|^2=\sup_{\|\xi\|\le1}\sum_{x_i\in K,z\in S,zz^*\le x_i^*x_i}|\lambda_i(\xi,z)|^2
\]
and by the same calculation
\[
a^2=\sup_{\|\xi\|\le1}\left\| \sum_{x_i\in S}\lambda_i\pi(x_i) \xi \right\|^2=\sup_{\|\xi\|\le1}\sum_{x_i\in S,z\in S,zz^*\le x_i^*x_i}|\lambda_i(\xi,z)|^2.
\]
Therefore $\Phi_0$ is contractive and extends to a contractive projection on $C^*_{\hbox{red}}(S)$ with range $\hbox{TRO}(K)$.

Let $A=C^*_{\hbox{red}}(S)$, $U=TRO(K)$, so that $\Phi^{**}$ is a contractive projection on the von Neumann algebra $A^{**}$ with range $U^{**}$. By \cite[Lemma]{LanRus83}, $U^{**}$ is isomorphic to $\hbox{VNTRO}(K)$, and by \cite[Theorem 2.5]{EffOzaRua01}, $\Phi^{**}$ is a completely contractive projection with range $\hbox{VNTRO}(K)$. Therefore, $\overline{\Phi}:=\Phi^{**}|_{VN(S)}$ is a completely contractive projection of $VN(S)$ onto $VNTRO(K)$.  If $VN(S)$ is injective, then there is a completely contractive projection $P$ of $B(H)$ onto $VN(S)$, so that $\overline{\Phi}\circ P$ is a completely contractive projection with range $VNTRO(K)$.
\end{proof}

\begin{example}\label{exam:0916211}
Suppose that $e$ and $f$ are idempotents in the inverse semigroup $S$ and that $K=eSf$, which is a subsemiheap of $S$. The corresponding induced map takes the form $\pi(x)\mapsto \pi(exf)$ (with $0\rightarrow 0$) and is contractive since
\[
\left\|\sum_i \lambda_i \pi(ex_if)\right\|\le \|\pi(e)\|\left\|\sum_i\lambda_i\pi(x_i)\right\|\|\pi(f)\|=\left\|\sum_i\lambda_i\pi(x_i)\right\|.
\]
Hence Proposition~\ref{prop:0908211} applies.  This also applies to maps of the form $x\mapsto ex$ and $x\mapsto xf$.
\end{example}

The maximal subgroups of any inverse semigroup $S$ are of the form 
\[
S_e^e=\{s\in S:ss^*=s^*s=e\}
\]
for some idempotent $e$ (See \cite[p. 198]{Paterson1999}). Thus, the maximal subgroups of  $E$ in Example~\ref{exam:0916212} reduce to one-element groups, so are trivially amenable and hence by \cite[Theorem 4.5.2]{Paterson1999}, $\hbox{VN}(E)$ is injective.\footnote{As pointed out to the authors by Alan Paterson,  the proof of \cite[Theorem 4.5.2]{Paterson1999} required the assumption that the universal groupoid of the inverse semigroup be Hausdorff. This assumption holds for the extended bicyclic semigroup $E$, because it is an E-unitary semigroup (see \cite[Corollary 3.7]{SkandalisEtAlJRAM02} and \cite[p.57]{Lawson2017}). In addition, it appears from \cite{SkandalisEtAlJRAM02} that the Hausdorff assumption can actually be dropped.}

Since $VN(E)$ is injective, where $E$ is the extended bicyclic semigroup,  it follows from Proposition~\ref{prop:0908211} and  Example~\ref{exam:0916211}, that  $VNTRO(a_{ii}Ea_{jj})$ is an injective operator space, as are $VNTRO(Ea_{jj})$ and $VNTRO(a_{ii}E)$. 
More generally, we have
\begin{corollary}\label{cor:0211221}
All of the subsemiheaps of the extended bicyclic semigroup $E$
 (which were determined in Theorem~\ref{thm:0211221}) give rise to injective W*-TROs.
 \end{corollary}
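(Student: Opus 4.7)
The plan is to apply Proposition~\ref{prop:0908211} directly, using the fact that $\mathrm{VN}(E)$ is injective. First I would verify the hypothesis of \cite[Theorem 4.5.2]{Paterson1999}, namely that every maximal subgroup of $E$ is amenable. The idempotents of $E$ are exactly the diagonal elements $a_{ii}$, $i \in \IZ$, and for such $e = a_{ii}$ the maximal subgroup
\[
E_e^e = \{s \in E : ss^* = s^*s = e\}
\]
is seen to be $\{a_{ii}\}$ via the multiplication formula~(\ref{eq:0927211}): any $a_{pq}$ satisfying $a_{pq}a_{pq}^* = a_{pq}^*a_{pq} = a_{ii}$ forces $p = q = i$. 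These trivial groups are amenable, the Hausdorff hypothesis on the universal groupoid is handled by the footnote, so \cite[Theorem 4.5.2]{Paterson1999} yields that $\mathrm{VN}(E)$ is an injective von Neumann algebra.

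Next, let $K$ be any subsemiheap of $E$ (one of those listed in Theorem~\ref{thm:0211221}, possibly arising as an inductive limit via Remark~\ref{rem:0308221}). Proposition~\ref{prop:0908211} applies verbatim: the idempotent map $\Phi_0$ defined by $\pi(x) \mapsto \pi(x)$ for $x \in K$ and $\pi(x) \mapsto 0$ otherwise extends to a completely contractive projection $\overline{\Phi} : \mathrm{VN}(E) \to \mathrm{VNTRO}(K)$. Since $\mathrm{VN}(E)$ is injective, there exists a completely contractive projection $P : \calB(\ell^2(E)) \to \mathrm{VN}(E)$. The composition $\overline{\Phi} \circ P$ is then a completely contractive projection of $\calB(\ell^2(E))$ onto $\mathrm{VNTRO}(K)$, which proves that $\mathrm{VNTRO}(K)$ is an injective operator space.

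The crucial observation is that Proposition~\ref{prop:0908211} is formulated for \emph{any} subsemiheap of \emph{any} inverse semigroup $S$ with injective $\mathrm{VN}(S)$; the contractivity argument in its proof does not depend on the particular shape of $K$, so the exhaustive classification in Theorem~\ref{thm:0211221} is not actually invoked here --- the corollary holds uniformly across all the cases. The main obstacle in this approach has already been surmounted inside Proposition~\ref{prop:0908211}, where one must pass from the contractive projection on $C^*_{\mathrm{red}}(E)$ to a completely contractive projection at the level of the weak closures, using the identification of $U^{**}$ with $\mathrm{VNTRO}(K)$ from \cite{LanRus83} together with \cite[Theorem 2.5]{EffOzaRua01} to upgrade contractivity to complete contractivity. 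Once those ingredients are in place, the corollary is essentially a formal consequence.
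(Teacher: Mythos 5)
Your proposal is correct and follows essentially the same route as the paper: the maximal subgroups of $E$ are trivial, so $\mathrm{VN}(E)$ is injective by Paterson's theorem, and Proposition~\ref{prop:0908211} then yields a completely contractive projection of $B(\ell^2(E))$ onto $\mathrm{VNTRO}(K)$ for any subsemiheap $K$. Your observation that the classification in Theorem~\ref{thm:0211221} is not logically needed for the injectivity conclusion also matches the paper, which treats the corollary as an immediate consequence of the proposition together with the injectivity of $\mathrm{VN}(E)$.
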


\begin{bibdiv}
\begin{biblist}

\bib{Descalco2005}{article}{
   author={Descal\c{c}o, L},
   author={Ru\u{s}kuc, N},
   title={Subsemigroups of the bicyclic monoid},
   journal={Internat. J. Algebra Comput. },
   volume={15},
    date={2005},
    number={1},
     pages={37--57},
  }

\bib{EffOzaRua01}{article}{
   author={Edward G.\ Effros},
   author={Narutaka Ozawa},
   author={Zhong-Jin Ruan},
   title={on injectivity and nuclearity for operator spaces},
   journal={Duke Math. J.},
   volume={110},
    date={2001},
    number={3},
     pages={489--522},
  }

  \bib{Hestenes62}{article}{
   author={Hestenes, Magnus R.},
   title={A ternary algebra with applications to matrices and linear transformations},
   journal={Arch.\ Rational Mech.\ Anal.},
   volume={11},
   date={1962},
   pages={138--194},
  }

  \bib{Hovsepyan2020}{article}{
   author={Hovsepyan, K. H. },
   title={Inverse Subsemigroups of the Bicyclic Semigroup},
   journal={Mathematical Notes},
   volume={108},
   number={3-4},
   date={2020},
   pages={550--556},
  }

      \bib{Hollings2014}{book}{
   author={Hollings, Christopher D.},
    title={Mathematics across the Iron Curtain. A history of the algebraic theory of semigroups},
   series={History of Mathematics},
   volume={41},
   publisher={American Mathematical Society, Providence, RI},
    date={2014},
     pages={xii+441 pp.},
  }

\bib{SkandalisEtAlJRAM02}{article}{
   author={Khoshkam, Mahmood},
   author={Skandalis, Georges},
   title={Regular representation of groupoid C*-algebras and applications to inverse semigroups},
   journal={J.\ Reine Angew.\ Math.},
   volume={546},
    date={2002},
     pages={47--72},
  }

   \bib{Lawson2017}{book}{
   author={Hollings, Christopher D.},
   author={Lawson, Mark V.},
   title={Wagner's theory of generalised heaps},
   publisher={Springer, Cham},
    date={2017},
     pages={xv+189 pp.},
  }

  \bib{LanRus83}{article}{
   author={Landesman, E.\ M.},
   author={Russo, Bernard},
   title={The second dual of a C*-ternary ring},
   journal={Canad.\ Math.\ Bull.},
   volume={26},
   number={2},
   date={1983},
   pages={241--246},
  }

   \bib{Lawson98}{book}{
   author={Lawson, Mark V.},
   title={Inverse semigroups. The theory of partial symmetries},
   publisher={World Scientific Publishing Co., Inc., River Edge, NJ},
    date={1998},
     pages={xiv+411 pp.},
  }

  \bib{Loos72}{article}{
   author={Loos, Ottmar},
    title={Associative Tripelsysteme},
   journal={Manuscripta Math.},
   volume={7},
   date={1972},
   pages={103-112},
  }

 \bib{Paterson1999}{book}{
   author={Paterson, Alan L.\ T.},
  title={Groupoids, Inverse Semigroups, and their operator Algebras},
   series={Progress in Mathematics},
   volume={170},
   publisher={Birkh\"auser, Boston, Basel, Berlin},
    date={1999},
     pages={xvi+274 pp.},
  }

\bib{Schein2003}{article}{
   author={Schein, Boris M.},
   title={Tight inverse semigroups},
   journal={Advances in algebra,World Sci. Publ., River Edge, NJ, },
   date={2003},
   pages={232--243},
  }

\bib{Warne1968}{article}{
   author={Warne, R. J.},
   title={I-bisimple semigroups},
   journal={Trans. Amer. Math. Soc.},
   volume={130},
   date={1968},
   pages={367--386},
  }

\bib{Zettl83}{article}{
   author={Zettl, Heinrich},
   title={A characterization of ternary rings of operators},
   journal={Advances in Math.},
   volume={48},
   number={2},
   date={1983},
   pages={117--143},
  }

\end{biblist}
\end{bibdiv}

\end{document}